\documentclass[12pt,oneside,notitlepage]{book}
\usepackage[leqno]{amsmath}
\usepackage{ amsmath, amsthm, amsfonts,amssymb, hyperref, commath}
\usepackage{hyperref, graphicx, ulem, mathrsfs, ifpdf, multicol, color, verbatim, epsfig, xy}
\usepackage[toc,page]{appendix}
\xyoption{all}
\hypersetup{
   bookmarks=true,         
   unicode=false,          
   pdftoolbar=true,        
   pdfmenubar=true,        
   pdffitwindow=false,     
   pdfstartview={FitH},    
   pdftitle={},    
   pdfauthor={},     
   pdfsubject={},   
   pdfcreator={},   
   pdfproducer={}, 
   pdfkeywords={}, 
   pdfnewwindow=true,      
   colorlinks=true,       
   linkcolor=blue,          
   citecolor=blue,        
   filecolor=magenta,      
   urlcolor=MidnightBlue          
}


\usepackage[letterpaper, top=.8in, bottom=.8in, left=1.4in, right=.8in, includefoot]{geometry}


\begin{document}

\newcommand{\R}{{\mathbb R}}
\newcommand{\eqr}[1]{(\ref{#1})}
\newcommand{\h}{\mathbb H^3}
\newcommand{\C}{\mathbb C}
\newcommand{\s}{{\mathbb S}}
\newcommand{\n}{\noindent}
\pagestyle{plain} \pagenumbering{roman}

\thispagestyle{empty}
\begin{center}
\vspace*{1in}
{\Large CLASSIFICATION AND ANALYSIS OF LOW INDEX MEAN CURVATURE FLOW SELF-SHRINKERS}\\
{\large by\\
Caleb Hussey}\\
\vfill A dissertation submitted to The Johns Hopkins University in
conformity with the requirements for the degree of Doctor of Philosophy\\
Baltimore, Maryland\\
June, 2012\\
\vfill
\copyright Caleb Hussey 2012\\
All rights reserved\\
\end{center}

\def\fnum{equation}
\newtheorem{thm}[\fnum]{Theorem}
\newtheorem{cor}[\fnum]{Corollary}
\newtheorem{Que}[\fnum]{Question}
\newtheorem{lem}[\fnum]{Lemma}
\newtheorem{Con}[\fnum]{Conjecture}
\newtheorem{Pro}[\fnum]{Proposition}
\theoremstyle{definition}
\newtheorem{defn}[\fnum]{Definition}
\newtheorem{Exa}[\fnum]{Example}
\newtheorem{rem}[\fnum]{Remark}


\newpage
\chapter*{Abstract}
\addcontentsline{toc}{chapter}{\protect\numberline{}Abstract} We investigate Mean Curvature Flow self-shrinking hypersurfaces with polynomial growth.  It is known that such self shrinkers are unstable.  We focus mostly on self-shrinkers of the form $\mathbb S^k\times\R^{n-k}\subset \R^{n+1}$.  We use a connection between the stability operator and the quantum harmonic oscillator Hamiltonian to find all eigenvalues and eigenfunctions of the stability operator on these self-shrinkers.  We also show self-shrinkers of this form have lower index than all other complete self-shrinking hypersurfaces.  In particular, they have finite index.  This implies that the ends of such self shrinkers must be stable.  We look for the largest stable regions of these self shrinkers.

\textsc{Readers:} William P. Minicozzi II (Advisor), Chikako Mese, and Joel Spruck.

\newpage
\chapter*{Acknowledgments}
\addcontentsline{toc}{chapter}{\protect\numberline{}Acknowledgments}
\n I would first and foremost like to thank my advisor, Dr. William Minicozzi, for his patience and advice.  This dissertation could not have been written without him.  I also want to thank my friend and colleague Sinan Ariturk for numerous fruitful discussions.  I owe many thanks to my family and friends for their love, support, and distractions.  Finally, this dissertation is dedicated to Claudia, for keeping me happy and motivated.
\vspace{.1in}

\newpage
\tableofcontents
\listoffigures

\newpage
\pagenumbering{arabic}

\chapter{Introduction}

Mean Curvature Flow (MCF) is a nonlinear second order differential equation, so we are guaranteed the short-time existence of solutions \cite{HP}.  However, unlike the heat equation, solutions of MCF with initially smooth data often develop singularities.  Understanding these singularities and extending MCF past them have historically been the greatest impediments to the study of MCF.

A surface flowing by mean curvature tends to decrease in area.  In fact, it is possible to show that MCF is the $L^2$-gradient flow of the area functional, meaning that in some sense flowing a surface by its mean curvature decreases the surface's area as quickly as possible.  Because of this it makes sense that the surfaces that don't change under MCF are precisely minimal surfaces.  This is also clear from the fact that minimal surfaces have mean curvature $H=0$.

When applied to curves in the plane, MCF is called the curve shortening flow.  The singularities that can develop from the curve shortening flow are well understood thanks to the work of Abresch-Langer, Grayson, Gage-Hamilton, and others.  Grayson showed that any smooth simple closed curve in the plane will evolve under the curve shortening flow into a smooth convex curve \cite{G1}.  It had already been proven by Gage and Hamilton that any smooth convex curve will remain smooth until it disappears in a round point \cite{GH}.

Abresch and Langer found a family of non-embedded plane curves that evolve under the curve shortening flow by holding their shape and shrinking until they become extinct at singularities \cite{AL}.  These curves remain non-embedded until they become extinct, meaning that these singularities are different from the "round points" arising from simple closed curves evolving under the curve shortening flow.

In both of the examples above, singularities can be modeled by self-shrinking curves, that is by curves that change by dilations when they flow by mean curvature.  It follows from the work of Huisken in \cite{H} (see Section \ref{58} below) that as a hypersurface flowing by mean curvature in any dimension approaches a singularity, it asymptotically approaches a self-shrinker.  Thus, there is great interest in understanding the possible MCF self-shrinkers, since these describe all the possible singularities of MCF.

Colding and Minicozzi have recently proven that the only mean curvature flow singularities that cannot be perturbed away are the sphere and generalized cylinders given by $\s^k\times\R^{n-k}$ with $1\leq k\leq n$ \cite{CM1}.  When $k=0$ the product space above is simply a hyperplane, which is a self-shrinker but of course does not give rise to any singularity.  In this dissertation we focus on these same self-shrinkers, allowing the $k=0$ hyperplane case.

In Chapter \ref{59} we define MCF and self-shrinkers and discuss known examples of self-shrinkers.  We also define what we mean by stability of a self-shrinker and present some necessary background results.

In Chapter \ref{26} we find the spectrum and the eigenfunctions of the stability operator on all self-shrinking hypersurfaces of the form $\s^k\times\R^{n-k}$.  We also prove that these self-shrinkers have strictly lower stability index than all other self-shrinkers.

In Chapter \ref{24} we investigate the largest stable regions of these generalized cylinders using two techniques.  First, we apply the results of Chapter \ref{26} to find several stable regions, using the fact that a stability operator eigenfunction with eigenvalue $0$ is a Jacobi function.  Then we look for stable regions with a rotational symmetry.

In Chapter \ref{60} we solve a minimization problem to obtain stable portions of a family of surfaces recently discovered by Kleene and M{\o}ller \cite{KM}.  We use the stability of these regions to find a larger stable region of the plane in $\R^3$ than we were able to find in Chapter \ref{24}.

In Chapter \ref{61} we discuss a few open questions for future research.

\chapter{Background}\label{59}

\section{Preliminaries}

Throughout, we will assume the following.  Let $\Sigma^n\subset \R^{n+1}$ be an orientable connected and differentiable hypersurface with unit normal $\vec{n}$.  We will only consider the case where $n\geq 2$.  When the dimension of $\Sigma^n$ is clear or unimportant, we will sometimes omit the $n$ and write simply $\Sigma$.  When possible, we will choose the unit normal $\vec{n}$ to be outward pointing.  The mean curvature $H$ of $\Sigma$ is defined by $H=\text{div} (\vec{n})$.  We will only consider hypersurfaces $\Sigma$ with polynomial volume growth.  This means that there exists a point $p\in\Sigma$ and constants $C$ and $d$ such that for all $r\geq1$ $$Vol(B_r(p)\cap\Sigma)\leq C r^d$$ where $B_r(p)$ denotes the ball of radius $r$ about the point $p$.

We let $A$ denote the second fundamental form on $\Sigma$.  If $\{\vec{e_i}\}_{i=1}^n$ is an orthonormal frame for $\Sigma$, the components of $A$ with respect to $\{\vec{e_i}\}_{i=1}^n$ are given by
\begin{equation}\label{34}
a_{ij}=\langle \nabla_{\vec{e_i}}\vec{e_j},\vec n\rangle
\end{equation}
Differentiating $\langle\vec{e_j},\vec n\rangle=0$, we obtain
\begin{equation}\label{35}
\nabla_{\vec{e_i}}\vec n=-a_{ij}\vec{e_j}
\end{equation}
Thus the mean curvature $H=-a_{ii}$.  Here, as in the rest of the paper, we use the convention that repeated indices are summed over.

With this definition, the mean curvature of a sphere of radius $r$ is given by $H=\frac 2 r$, and the mean curvature of a cylinder of radius $r$ is given by $H=\frac 1 r$ \cite{CM1}.  Note that with this definition, the mean curvature is the sum of the principal curvatures, rather than their average.

Unless otherwise stated, $\langle\cdot,\cdot\rangle$ denotes the Euclidean inner product.  We also let $B^{\Sigma}_r(p)$ denote the geodesic ball in $\Sigma$ about the point $p\in\Sigma$.  In other words, $B^{\Sigma}_r(p)$ denotes the set of all points in $\Sigma$ whose intrinsic distance from $p$ is less than $r$.  The standard ball of radius $r$ about the point $p$ in Euclidean $n$-space will be denoted by $B^n_r(p)$, or simply by $B_r(p)$ when the dimension is clear from context.

\section{Mean Curvature Flow (MCF)}\label{58}

The idea of mean curvature flow is to start with a surface $\Sigma$, and at every point move $\Sigma$ in the direction $-\vec{n}$ with speed $H$.  Note that for a hypersurface this process does not depend on our choice of $\vec n$, since replacing $\vec n$ by its opposite will change the sign of $H=\text{div}(\vec n)$.

\begin{defn}\cite{CM1} Let $\Sigma_t$ be a family of hypersurfaces in $\R^3$.  We say $\Sigma_t$ flows by mean curvature if $$(\partial_t \vec{x})^\bot=-H\vec{n}.$$  Here $(\partial_t \vec{x})^\bot$ denotes the component of $\partial_t \vec{x}$ which is perpendicular to $\Sigma_t$ at $\vec{x}$.
\end{defn}

This restriction to the normal component is reasonable, because tangential components of $\partial_t\vec{x}$ correspond to internal reparameterizations of $\Sigma_t$, which do not affect the geometry of the surface.  Indeed, by changing the parameterizations of $\Sigma_t$, $\partial_t\vec{x}$ can be made perpendicular to $\Sigma_t.$

Note that minimal surfaces satisfy $H=0$ and are thus invariant under MCF.  If $\Sigma$ is a minimal surface, and $\Sigma_t$ is any smooth family of parameterizations of $\Sigma$, then $\Sigma_t$ flows by mean curvature.

We now compute the simplest non-static solution of MCF.  This is a round sphere which shrinks until it disappears in finite time at a singularity.  An equivalent computation would work for hyperspheres in arbitrary dimension.

\begin{Exa}\label{1} Consider the case of a round sphere of radius $r$ in $\R^3.$  At every point the mean curvature of this sphere is given by $H=\frac 2 r$.  As $r\rightarrow 0$, $H\rightarrow \infty$.  Since $(\partial_t\vec x)^\bot=-H\vec n$, the sphere is shrinking, and the rate at which the sphere shrinks grows as $r\rightarrow 0$.  Thus, the sphere becomes extinct in finite time.

By the symmetry of the sphere, it is clear that as the surface flows by mean curvature it remains a sphere until it reaches the singularity and vanishes.  Thus we can model the evolution of the surface by an ordinary differential equation for the radius $r=r(t)$.  $$(\partial_t\vec x)^\bot=\frac{dr}{dt}\vec{n}=-H\vec{n}$$  Substituting in for $H$ yields $$\frac{dr}{dt}=-\frac 2 r$$  which has the solution $r=\sqrt{c-4t}$.

If we want to determine the constant of integration $c$, then we need to choose a convention.  Clearly the point where $r=0$ corresponds to the singularity, so specifying the time at which the singularity occurs is equivalent to choosing the constant $c$.  It is standard to choose the constant so that the singularity occurs at time $t=0$.  That makes the constant 0, so our solution becomes $r=\sqrt{-4t}$.  Note that at the time $t=-1$ our sphere has radius 2.
\end{Exa}

MCF is a nonlinear parabolic flow, so its solutions satisfy a maximum principle \cite{N}.  This means that if two complete hypersurfaces are initially disjoint, then they will remain disjoint when allowed to flow by mean curvature.  This implies via the following argument that the development of singularities in MCF is a widespread phenomenon.

Let $\Sigma$ be any closed surface in $\R^3$, meaning that $\Sigma$ is compact and has no boundary.  Then there exists some $r$ such that $\Sigma$ is completely contained in $B_r(0)$.  Letting $S_r=\partial B_r(0)$ be the boundary of this ball, we see that $S_r$ and $\Sigma$ are disjoint.  Then by the maximum principle they must stay disjoint when they flow by mean curvature.  However, by Example \ref{1} we know $S_r$ becomes extinct in finite time.  Thus, $\Sigma$ must become extinct at a singularity before $S_r$ does.  Thus, all closed surfaces in $\R^3$ become singular in finite time.

We now prove a well known theorem with some important implications for the study of MCF singularities.  We follow the conventions used in \cite{CM1}.

\begin{thm}[Huisken's Monotonicity Formula]\cite{H}
Let $\Sigma_t$ be a surface flowing by mean curvature.  Define $\Phi (\vec x,t)=[-4\pi t]^{-\frac n 2 }e^{\frac {|\vec x|^2}{4t}}$, and set $\Phi_{(\vec {x_0},t_0)}=\Phi(\vec x-\vec {x_0},t-t_0)$.  Then $$\frac d{dt}\int_{\Sigma_t}\Phi_{(\vec {x_0},t_0)}d\mu=-\int_{\Sigma_t} \Bigl| H\vec{n}-\frac{(\vec x-\vec {x_0})^\bot}{2(t_0-t)}\Bigr|^2\Phi_{(\vec {x_0},t_0)}d\mu.$$
\end{thm}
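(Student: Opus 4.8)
The plan is to differentiate the weighted area functional $\int_{\Sigma_t}\Phi_{(\vec{x_0},t_0)}\,d\mu$ directly, keeping careful track of three contributions: the time derivative of the integrand $\Phi$ with $\vec x$ held fixed, the time derivative of $\Phi$ coming from the motion of the point $\vec x=\vec x(t)$ along the flow, and the variation of the area element $d\mu$ under the flow. Without loss of generality, by translating in space and time, I would assume $\vec x_0=0$ and $t_0=0$, so that $\Phi=\Phi(\vec x,t)=(-4\pi t)^{-n/2}e^{|\vec x|^2/(4t)}$ with $t<0$; at the end one recovers the general statement by the obvious substitution. Write $\tau=-t>0$ so the Gaussian weight is genuinely decaying in $|\vec x|$.

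The key computational steps are as follows. First, for the area element, the first variation formula for a normal variation with velocity $-H\vec n$ gives $\partial_t(d\mu)=-H^2\,d\mu$ (here I use $H=\mathrm{div}(\vec n)$ and the fact that the tangential part of $\partial_t\vec x$ does not affect the integral). Second, a direct calculation of $\partial_t\Phi$ at fixed $\vec x$ yields $\partial_t\Phi=\bigl(-\tfrac{n}{2t}-\tfrac{|\vec x|^2}{4t^2}\bigr)\Phi$. Third, the ``transport'' term is $\langle\nabla_{\R^{n+1}}\Phi,\partial_t\vec x\rangle=\langle\nabla_{\R^{n+1}}\Phi,-H\vec n\rangle$, and since $\nabla_{\R^{n+1}}\Phi=\tfrac{\vec x}{2t}\Phi$ this equals $-\tfrac{H}{2t}\langle\vec x,\vec n\rangle\Phi=-\tfrac{H}{2t}\langle\vec x^\bot,\vec n\rangle\Phi$. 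Assembling these three pieces,
\begin{equation}\label{huiskenassembly}
\frac{d}{dt}\int_{\Sigma_t}\Phi\,d\mu=\int_{\Sigma_t}\Bigl(\partial_t\Phi+\langle\nabla_{\R^{n+1}}\Phi,-H\vec n\rangle-H^2\Phi\Bigr)\,d\mu.
\end{equation}
The crux of the argument is then to rewrite the integrand of \eqref{huiskenassembly} as a perfect square. The natural move is to integrate by parts on $\Sigma_t$: the term $-\tfrac{|\vec x|^2}{4t^2}\Phi$ is not by itself expressible via $\langle\vec x^\bot,\vec n\rangle$, so one uses the tangential divergence theorem on $\Sigma$ to trade $\tfrac{1}{4t^2}|\vec x^T|^2\Phi$ (the tangential part) for terms involving $\Delta_\Sigma\Phi$ and the mean curvature. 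Concretely, $\Delta_\Sigma(|\vec x|^2/2)=n-H\langle\vec x,\vec n\rangle$ and $|\nabla_\Sigma \Phi|^2/\Phi$ can be related to $|\vec x^T|^2\Phi/(4t^2)$, and after the integration by parts the stray terms conspire so that
\[
\partial_t\Phi+\langle\nabla_{\R^{n+1}}\Phi,-H\vec n\rangle-H^2\Phi \;\longrightarrow\; -\Bigl|H\vec n-\frac{\vec x^\bot}{2t}\Bigr|^2\Phi
\]
after integrating over $\Sigma_t$, using that $\vec x^\bot=\langle\vec x,\vec n\rangle\vec n$ so $|H\vec n-\tfrac{\vec x^\bot}{2t}|^2=H^2-\tfrac{H}{t}\langle\vec x,\vec n\rangle+\tfrac{1}{4t^2}\langle\vec x,\vec n\rangle^2$.

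The main obstacle is the bookkeeping in the integration by parts: one must correctly split $|\vec x|^2=|\vec x^T|^2+|\vec x^\bot|^2$, recognize that only the tangential part is amenable to the divergence theorem on $\Sigma_t$, and verify that the boundary terms vanish — this is exactly where the polynomial volume growth hypothesis on $\Sigma$ enters, since it guarantees that the Gaussian weight $\Phi$ decays fast enough to kill the flux of $\nabla_\Sigma\Phi$ at infinity (for closed $\Sigma_t$ there is of course no boundary term at all). A secondary point requiring care is justifying differentiation under the integral sign, again controlled by the Gaussian decay together with polynomial volume growth. Once the square is identified, the sign is automatic and the formula follows.
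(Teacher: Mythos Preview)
Your proposal is correct and follows essentially the same route as the paper: differentiate under the integral to obtain the three contributions $\partial_t\Phi$, $\langle\nabla\Phi,-H\vec n\rangle$, and $-H^2\Phi$, then use the tangential divergence theorem on $\Sigma_t$ (with polynomial volume growth to kill the boundary flux) to convert the $|\vec x^T|^2$ piece into the remaining terms needed to complete the square. The only organizational difference is that the paper packages the integration by parts as Stokes' theorem applied to the single vector field $\vec v=\tfrac{1}{2\tau}\tilde\Phi\,\vec y$, which yields $\int_{\Sigma_t}\tfrac{H}{2\tau}\langle\vec y,\vec n\rangle\tilde\Phi=\int_{\Sigma_t}\bigl(\tfrac{n}{2\tau}+\tfrac{|\vec y^\top|^2}{4\tau^2}\bigr)\tilde\Phi$ in one stroke, whereas you gesture at $\Delta_\Sigma(|\vec x|^2/2)$ and $|\nabla_\Sigma\Phi|^2/\Phi$ separately; these are equivalent bookkeepings of the same identity.
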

\begin{proof}\cite{H}  We begin by setting $\vec y=\vec x-\vec {x_0}$ and $\tau=t-t_0$.  We will also set $\tilde\Phi = \Phi_{(\vec {x_0},t_0)}$ to simplify notation.  Then when we differentiate $\int_{\Sigma_t}\tilde\Phi d\mu$ we obtain an extra term coming from how $d\mu$ changes as the surface flows as a function of $t$.  In general (see e.g. \cite{CM1}), if $f\vec n$ is a variation of $\Sigma$, then $$(d\mu)'=fH d\mu.$$

Since $\Sigma$ is flowing by mean curvature, we have that $(d\mu)'=-H^2d\mu$.  Now we compute.
\begin{align} \tilde\Phi \notag &= (-4\pi\tau)^{-\frac n 2}e^{\frac{|\vec y|^2}{4\tau}} \\
\frac d{dt}\int\limits_{\Sigma_t}\tilde\Phi d\mu \notag &= \int\limits_{\Sigma_t} \left[\frac d{dt}\tilde\Phi - H^2\tilde\Phi  \right]d\mu \\
\frac d{dt}\int\limits_{\Sigma_t}\tilde\Phi d\mu \notag &= \int\limits_{\Sigma_t} \left[\frac {\partial}{\partial t}\tilde\Phi +\left\langle \nabla\tilde\Phi,\frac{d\vec x}{dt} \right\rangle -H^2\tilde\Phi \right]d\mu \\
\frac{d\vec y}{dt} \notag &= \frac{d\vec x}{dt} = -H\vec n \\
\frac d{dt}\int\limits_{\Sigma_t}\tilde\Phi d\mu \notag &= \int\limits_{\Sigma_t} \left[-\frac n{2\tau}-\frac{|\vec y|^2}{4\tau^2}-\frac{H}{2\tau}\langle\vec y,\vec n\rangle -H^2  \right]\tilde\Phi d\mu \\
\frac d{dt}\int\limits_{\Sigma_t}\tilde\Phi d\mu \notag &= -\int\limits_{\Sigma_t}\tilde\Phi \left| H\vec n+\frac{\vec y}{2\tau} \right|^2 d\mu + \int\limits_{\Sigma_t} \frac{H}{2\tau}\langle \vec y,\vec n\rangle\tilde \Phi d\mu -\int\limits_{\Sigma_t}\frac n{2\tau}\tilde\Phi d\mu
\end{align}

However, for any vector field $\vec v$ we can write $\text {div}_{\Sigma} \vec v=\text {div}_\Sigma \vec v^\top +H\langle\vec v,\vec n\rangle$.  We fix a value of $t$ and consider $\Sigma(r)=B_r(p)\cap\Sigma_t$.  Note that as $r\rightarrow \infty$, $\Sigma(r)\rightarrow\Sigma_t$.  We then choose a specific vector field $\vec v$ and apply Stokes' Theorem.  We let $\vec \nu$ denote the unit vector field normal to $\partial \Sigma(r)$ and tangent to $\Sigma(r)$, and let $d\tilde\mu$ denote the measure on $\partial \Sigma(r)$.
\begin{align} \vec v \notag &= \frac 1 {2\tau}\tilde\Phi\vec y \\
\int\limits_{\Sigma(r)}\text {div}_\Sigma \vec v^\top d\mu \notag &= \int\limits_{\partial\Sigma(r)} \langle \vec v,\vec\nu\rangle d\tilde\mu \\
\int\limits_{\Sigma(r)}\text {div}_\Sigma \vec v^\top d\mu \notag &= \int\limits_{\partial\Sigma(r)} \frac 1 {2\tau}\tilde\Phi\langle \vec y,\vec\nu\rangle d\tilde\mu \\
\int\limits_{\Sigma_t}\text {div}_\Sigma \vec v^\top d\mu \notag &= \lim\limits_{r\rightarrow\infty} \int\limits_{\partial\Sigma(r)} \frac 1 {2\tau}\tilde\Phi\langle \vec y,\vec\nu\rangle d\tilde\mu \\
\int\limits_{\Sigma_t}\text {div}_\Sigma \vec v^\top d\mu \notag &= 0
\end{align}

In the last equality we used the fact that $\Sigma_t$ has polynomial volume growth, so the exponential decay of $\tilde\Phi$ dominates in the limit.  We let $\{\vec {e_i}\}_{i=1}^n$ be an orthonormal frame for $\Sigma_t$ and continue.
\begin{align}\int\limits_{\Sigma_t}\text {div}_{\Sigma} \left(\frac 1 {2\tau}\tilde\Phi\vec y\right)d\mu \notag &=\int\limits_{\Sigma_t}H\langle\frac 1 {2\tau}\tilde\Phi\vec y,\vec n\rangle d\mu \\
\int\limits_{\Sigma_t} \frac{H}{2\tau}\tilde\Phi \langle \vec y,\vec n\rangle d\mu \notag &= \int\limits_{\Sigma_t}\frac 1{2\tau}\text {div}_{\Sigma} (\tilde\Phi\vec y)d\mu \\
\nabla_{\vec{e_i}} (\tilde\Phi\langle\vec y,\vec{e_i}\rangle) \notag &= \tilde\Phi \langle\vec {e_i},\vec{e_i}\rangle + \tilde\Phi \langle\vec y,\vec{e_i}\rangle \frac{\langle\vec y,\vec{e_i}\rangle}{2\tau} \\
\frac 1{2\tau}\text {div}_{\Sigma} (\tilde\Phi\vec y)d\mu \notag &= \frac n{2\tau}\tilde\Phi + \frac{|\vec y^\top|^2}{4\tau^2}\tilde\Phi \\
\int\limits_{\Sigma_t} \frac{H}{2\tau}\tilde\Phi \langle \vec y,\vec n\rangle d\mu \notag &= \int\limits_{\Sigma_t}\left(\frac n{2\tau} + \frac{|\vec y^\top|^2}{4\tau^2}  \right)\tilde\Phi d\mu \\
\frac d{dt}\int\limits_{\Sigma_t}\tilde\Phi d\mu \notag &= -\int\limits_{\Sigma_t}\tilde\Phi \left| H\vec n+\frac{\vec y}{2\tau} \right|^2 d\mu + \int\limits_{\Sigma_t} \frac{|\vec y^\top|^2}{4\tau^2} \tilde\Phi d\mu \\
\frac d{dt}\int\limits_{\Sigma_t}\tilde\Phi d\mu \notag &= -\int\limits_{\Sigma_t}\tilde\Phi \left| H\vec n+\frac{\vec y^\bot}{2\tau} \right|^2 d\mu
\end{align}
\end{proof}

Understanding the possible singularities of MCF has been one of the main obstacles to the theory, because these are the places where the MCF ceases to be smooth.  Also, a connected surface flowing by mean curvature can split into multiple connected components at a singularity.  As an example of this, Grayson \cite{G} constructed a dumbbell that flows smoothly by mean curvature until it splits into two topological spheres (see Subsection \ref{63} below).

However, using Huisken's monotonicity formula it is possible to show \cite{H} that in the limit as a surface evolving by MCF approaches a singularity the surface asymptotically approaches a self-shrinker, which we will define in the next chapter.  Thus, a key step in understanding the singularities of MCF is to understand its self-shrinkers.

\section{Self-Shrinkers}

The sphere discussed in Example \ref{1} is a self-shrinker, meaning that as it flows by mean curvature it changes by a dilation.  However, as we did in the example, we will often specify that the MCF should become extinct at $t=0$ and $\vec{x}=0$.  In such cases, we will refer to a surface $\Sigma$ as a self-shrinker only if it is the $t=-1$ time slice of such a MCF that will become extinct at $t=0$ and $\vec{x}=0$.  So for instance, by the computation in Example \ref{1}, we would consider a sphere to be a self-shrinker only if it has radius 2 and is centered at the origin.  Otherwise, it would be a different time slice of the MCF in question, or if it is not centered at the origin it would become extinct at a point other than $\vec{x}=0$.  This leads us to the following definition.

\begin{defn}\cite{CM1} \label{7}  A surface $\Sigma$ is a self-shrinker if the family of surfaces $\Sigma_t=\sqrt{-t}\Sigma$ flows by mean curvature.
\end{defn}

We will also need the following two equivalent characterizations of self-shrinkers.

\begin{lem}\label{30} Suppose a surface $\Sigma^n\subset\R^{n+1}$ satisfies

 \begin{equation}\label{2}
 H=\frac{\langle \vec{x},\vec{n}\rangle}2.
 \end{equation}
 Then $\Sigma_t=\sqrt{-t}\Sigma$ flows by mean curvature, and for every $t$ we have
  \begin{equation}\label{36}
 H_{\Sigma_t}=-\frac{\langle \vec{x},\vec{n}_{\Sigma_t}\rangle}{2t}.
 \end{equation}

Conversely, suppose a family of surfaces $\Sigma_t$ flows by mean curvature.  Then $\Sigma_t$ is a self-shrinker, meaning $\Sigma_t=\sqrt{-t}\Sigma_{-1}$, if and only if $\Sigma_t$ satisfies \ref{36}.
\end{lem}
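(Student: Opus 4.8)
The plan is to reduce both assertions to the elementary scaling behaviour of the position vector, the unit normal, and the mean curvature under the dilations $\vec{x}\mapsto\lambda\vec{x}$. I would use (as in \cite{CM1}, or directly from \eqref{34}) that if $\Sigma'=\lambda\Sigma$ then at corresponding points $\vec{n}_{\Sigma'}=\vec{n}_{\Sigma}$ while $H_{\Sigma'}=\lambda^{-1}H_{\Sigma}$ (consistent with $H=2/r$ on the sphere of radius $r$), together with the fact that the normal component $(\partial_t\vec{x})^{\bot}$ of the velocity of a moving hypersurface does not depend on the chosen parametrization, so the MCF equation may be verified in whatever parametrization is convenient.

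For the first assertion I would parametrize $\Sigma$ by $F$ and $\Sigma_t=\sqrt{-t}\,\Sigma$ by $\vec{x}(p,t)=\sqrt{-t}\,F(p)$, with $t<0$. Then
\[
\partial_t\vec{x}=-\frac{1}{2\sqrt{-t}}\,F(p)=-\frac{1}{2\sqrt{-t}}\cdot\frac{\vec{x}}{\sqrt{-t}}=\frac{\vec{x}}{2t},
\]
so $(\partial_t\vec{x})^{\bot}=\frac{\langle\vec{x},\vec{n}_{\Sigma_t}\rangle}{2t}\,\vec{n}_{\Sigma_t}$. On the other hand, using $\vec{n}_{\Sigma_t}=\vec{n}_{\Sigma}$, $H_{\Sigma_t}=(-t)^{-1/2}H_{\Sigma}$, and hypothesis \eqref{2},
\[
H_{\Sigma_t}=\frac{1}{\sqrt{-t}}\cdot\frac{\langle F(p),\vec{n}_{\Sigma}\rangle}{2}=\frac{1}{\sqrt{-t}}\cdot\frac{\langle\vec{x}/\sqrt{-t},\,\vec{n}_{\Sigma_t}\rangle}{2}=-\frac{\langle\vec{x},\vec{n}_{\Sigma_t}\rangle}{2t},
\]
which is exactly \eqref{36}. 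Combining the two displays gives $(\partial_t\vec{x})^{\bot}=-H_{\Sigma_t}\,\vec{n}_{\Sigma_t}$, i.e.\ $\Sigma_t$ flows by mean curvature.

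For the converse I would assume $\Sigma_t$ flows by mean curvature. If moreover $\Sigma_t=\sqrt{-t}\,\Sigma_{-1}$, the computation of $\partial_t\vec{x}$ above applies verbatim (with $F=\vec{x}(\cdot,-1)$) and gives $(\partial_t\vec{x})^{\bot}=\tfrac{\langle\vec{x},\vec{n}\rangle}{2t}\vec{n}$; comparing with $(\partial_t\vec{x})^{\bot}=-H\vec{n}$ yields \eqref{36}. Conversely, assuming \eqref{36}, I would pick a parametrization with $\partial_t\vec{x}=-H\vec{n}$ and set $\vec{y}=\vec{x}/\sqrt{-t}$; differentiating,
\[
\partial_t\vec{y}=\frac{1}{\sqrt{-t}}\Bigl(-H\vec{n}-\frac{\vec{x}}{2t}\Bigr),
\]
whose normal part is $\frac{1}{\sqrt{-t}}\bigl(-H-\tfrac{\langle\vec{x},\vec{n}\rangle}{2t}\bigr)\vec{n}=0$ by \eqref{36}. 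Hence $\vec{y}(\cdot,t)$ moves purely tangentially, so (integrating the tangential field $\partial_t\vec{y}$ to a $t$-dependent reparametrization of the domain) the surface $\vec{y}(\cdot,t)=(-t)^{-1/2}\Sigma_t$ is independent of $t$; at $t=-1$ it equals $\Sigma_{-1}$, so $\Sigma_t=\sqrt{-t}\,\Sigma_{-1}$, which by Definition~\ref{7} says $\Sigma_{-1}$ is a self-shrinker.

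The algebra is routine; the step I expect to require the most care is the passage from $(\partial_t\vec{y})^{\bot}=0$ to the conclusion that the rescaled family is geometrically static, which rests on the standard fact that a moving hypersurface with vanishing normal speed is fixed up to reparametrization. One could instead finish by uniqueness of mean curvature flow, comparing $\Sigma_t$ with the rescaling flow of $\Sigma_{-1}$ (which flows by mean curvature by the first assertion, since \eqref{36} at $t=-1$ is exactly \eqref{2}); but uniqueness would have to be invoked carefully for the noncompact self-shrinkers such as $\s^k\times\R^{n-k}$ studied later, so the direct argument via vanishing normal speed is cleaner.
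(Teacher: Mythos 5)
Your proof is correct, and for the forward implication and the ``only if'' direction of the converse you run exactly the same scaling computation as the paper (parametrize $\Sigma_t$ by $\sqrt{-t}$ times a fixed parametrization, compute $\partial_t\vec x$, use $\vec n_{\Sigma_t}=\vec n_\Sigma$ and $H_{\Sigma_t}=(-t)^{-1/2}H_\Sigma$).

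Where you genuinely depart from the paper is the ``if'' direction: that a mean-curvature flow $\Sigma_t$ satisfying \eqref{36} must equal $\sqrt{-t}\,\Sigma_{-1}$. The paper disposes of this by setting $t=-1$ to recover \eqref{2} and then saying ``the above argument shows that $\Sigma_t$ is a self-shrinker.'' But the forward part only shows that $\sqrt{-t}\,\Sigma_{-1}$ is \emph{a} mean curvature flow emanating from $\Sigma_{-1}$; identifying it with the given flow $\Sigma_t$ tacitly invokes uniqueness of MCF, which the paper never mentions and which is not automatic for the noncompact hypersurfaces $\s^k\times\R^{n-k}$ treated later. Your route — define $\vec y=\vec x/\sqrt{-t}$, observe that \eqref{36} forces $(\partial_t\vec y)^\bot=0$, and conclude that the rescaled family is geometrically static — closes this gap directly and is, as you note, the cleaner argument. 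You also anticipate the alternative (uniqueness-based) route and explain why you avoid it, which is exactly the right judgment call. One minor presentational point: when you ``integrate the tangential field $\partial_t\vec y$ to a $t$-dependent reparametrization,'' it is worth flagging that this requires the tangential velocity to be smooth in $(p,t)$, which it is here since $\Sigma_t$ is assumed to be a smooth flow.
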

\begin{proof} Suppose $\Sigma$ has mean curvature $H=\frac 1 2 \langle \vec x,\vec n\rangle$.  We wish to show $\Sigma_t=\sqrt{-t}\Sigma$ flows by mean curvature.  For each $\vec p\in\Sigma$ and $t\in(-\infty,0)$, set $\vec x(\vec p, t)=\sqrt{-t}\vec p$.  Then
\begin{align}\vec n_{\Sigma_t}(\vec x(\vec p,t)) \notag &= \vec n_{\Sigma}(\vec p) \\
H_{\Sigma_t}(\vec x(\vec p,t)) \notag &= \frac 1{\sqrt{-t}}H_\Sigma(\vec p)
\end{align}

To see this scaling rule for the mean curvature, consider a circle in $\R^2$.  Scaling the circle by $\lambda$ causes the radius to scale by $\lambda$, which causes the curvature to scale by $\frac 1 {\lambda}$.  This example is actually completely general, because the mean curvature of $\Sigma$ equals the sum of the principal curvatures of $\Sigma$.  Each principal curvature can be defined as the inverse of the radius of the circle of best fit to a curve on $\Sigma$.

We compute $H_{\Sigma_t}$.
\begin{align} \partial_t\vec x \notag &= \partial_t(\sqrt{-t}\vec p) \\
\partial_t\vec x \notag &= -\frac 1 {2\sqrt {-t}}\vec p \\
(\partial_t\vec x)^\bot \notag &= -\frac {\langle \vec p,\vec n_{\Sigma}\rangle} {2\sqrt {-t}} \\
(\partial_t\vec x)^\bot \notag &= -H_{\Sigma_t}
\end{align}
Thus $\Sigma_t$ flows by mean curvature.  We now show that $\Sigma_t$ also satisfies Equation \ref{36}.  By the above
\begin{align} H_{\Sigma_t} \notag &= \frac {\langle \vec p,\vec n_{\Sigma}\rangle} {2\sqrt {-t}} \\
H_{\Sigma_t} \notag &= \frac {\langle \sqrt{-t}\vec p,\vec n_{\Sigma_t}\rangle} {-2t} \\
H_{\Sigma_t} \notag &= -\frac {\langle \vec x,\vec n_{\Sigma_t}\rangle} {2t}
\end{align}

Conversely, suppose that $\Sigma_t$ flows by mean curvature.  Suppose first that $\Sigma_t$ satisfies Equation \ref{36} for all $t$.  Then setting $t=-1$ yields Equation \ref{2}, so the above argument shows that $\Sigma_t$ is a self-shrinker.  Now suppose instead that $\Sigma_{-1}=\frac{\Sigma_t}{\sqrt{-t}}$.  Then note that
\begin{align} (-t)^{\frac 3 2}\partial_t \left(\frac {\vec x}{\sqrt{-t}}\right) \notag &= (-t)^{\frac 3 2}\left(\frac{\partial_t\vec x}{\sqrt{-t}} +\frac{\vec x}{2(-t)^{\frac 3 2}} \right) \\
(-t)^{\frac 3 2}\partial_t \left(\frac {\vec x}{\sqrt{-t}}\right) \notag &= -t\partial_t\vec x+\frac{\vec x}2
\end{align}
However, $\dfrac {\vec x}{\sqrt {-t}}$ is fixed, so $\partial_t\left(\dfrac {\vec x}{\sqrt {-t}}\right)=0$.
\begin{align} 0 \notag &= (-t)^{\frac 3 2}\left\langle \partial_t\left(\frac {\vec x}{\sqrt{-t}}\right),\vec n_{\Sigma_{-1}}\right\rangle \\
0 \notag &= -t\langle \partial_t\vec x,\vec n_{\Sigma_{-1}}\rangle +\frac 1 2 \langle\vec x,\vec n_{\Sigma_{-1}}\rangle
\end{align}
By assumption $\Sigma_t$ flows by mean curvature, so
\begin{align} H_{\Sigma_{-1}} \notag &= -\langle \partial_t \vec x, \vec n_{\Sigma_{-1}}\rangle \\
H_{\Sigma_{-1}} \notag &= \frac 1 2\langle\vec x, \vec n_{\Sigma_{-1}}\rangle
\end{align}
Then $\Sigma_{-1}$ satisfies Equation \ref{2}, so $\Sigma_t$ must satisfy Equation \ref{36}
\end{proof}

We now define a two parameter family of functionals that are closely related to self-shrinkers.  These functionals will help us discuss the stability of self-shrinkers.

\begin{thm}\cite{CM1}\label{11}  Let $\vec{x_0}\in\R^3$, and let $t_0>0$.  Then given a surface $\Sigma\subset\R^n$ define the functional $F_{\vec{x_0},t_0}$ by
$$F_{\vec{x_0},t_0}(\Sigma)=(4\pi t_0)^{-\frac n 2}\int_\Sigma e^{\frac{-|\vec x-\vec{x_0}|^2}{4t_0}}d\mu.$$  Then $\Sigma$ is a critical point of $F_{\vec{x_0},t_0}$ if and only if $\Sigma=\Sigma_{-t_0}$ where $\Sigma_t=\vec{x_0} + \sqrt{-t}\Sigma_{-1}$ is flowing by mean curvature.  Note that at $t=0$, $\Sigma_t$ becomes extinct at $\vec{x_0}$.
\end{thm}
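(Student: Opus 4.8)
The plan is to compute the first variation of $F_{\vec x_0,t_0}$, read off its Euler--Lagrange equation, and recognize that equation as Equation \ref{36} at time $t=-t_0$ via Lemma \ref{30}. First I would reduce to the case $\vec x_0=0$: since $F_{\vec x_0,t_0}(\Sigma)=F_{0,t_0}(\Sigma-\vec x_0)$ and the translation $\Sigma\mapsto\Sigma-\vec x_0$ carries normal variations of $\Sigma$ to normal variations of $\Sigma-\vec x_0$ with the same normal component, $\Sigma$ is a critical point of $F_{\vec x_0,t_0}$ iff $\Sigma-\vec x_0$ is a critical point of $F_{0,t_0}$; likewise $\Sigma_t=\vec x_0+\sqrt{-t}\,\Sigma_{-1}$ flows by mean curvature iff $\Sigma_t-\vec x_0=\sqrt{-t}\,(\Sigma_{-1}-\vec x_0)$ does. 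So it suffices to treat $F:=F_{0,t_0}$, with Gaussian weight $e^{-|\vec x|^2/(4t_0)}$.

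For the first variation, let $\Sigma_s$ be a variation of $\Sigma$ with compactly supported normal variation field $f\vec n$ (compact support being the natural requirement for ``critical point'' when $\Sigma$ is noncompact). Using $(d\mu)'=fH\,d\mu$ from the excerpt together with $\frac{d}{ds}\big|_{s=0}e^{-|\vec x|^2/(4t_0)}=-\frac{\langle\vec x,\vec n\rangle}{2t_0}f\,e^{-|\vec x|^2/(4t_0)}$, I obtain
\[
\frac{d}{ds}\Big|_{s=0}F(\Sigma_s)=(4\pi t_0)^{-n/2}\int_\Sigma f\Bigl(H-\frac{\langle\vec x,\vec n\rangle}{2t_0}\Bigr)e^{-|\vec x|^2/(4t_0)}\,d\mu .
\]
Differentiation under the integral is legitimate because $f$ has compact support (and $F(\Sigma)<\infty$ to begin with, by the polynomial volume growth of $\Sigma$ against the Gaussian weight). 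Since this integral vanishes for every such $f$, the fundamental lemma of the calculus of variations shows that $\Sigma$ is a critical point of $F$ if and only if $H=\frac{\langle\vec x,\vec n\rangle}{2t_0}$ on $\Sigma$.

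It remains to match this with Lemma \ref{30}. If $H=\frac{\langle\vec x,\vec n\rangle}{2t_0}$ on $\Sigma$, then rescaling shows $\tfrac{1}{\sqrt{t_0}}\Sigma$ satisfies $H=\tfrac12\langle\vec x,\vec n\rangle$, i.e. Equation \ref{2}; by the first part of Lemma \ref{30} the family $\Sigma_t=\sqrt{-t}\cdot\tfrac{1}{\sqrt{t_0}}\Sigma$ flows by mean curvature, and $\Sigma_{-t_0}=\Sigma$, $\Sigma_{-1}=\tfrac{1}{\sqrt{t_0}}\Sigma$, so $\Sigma_t=\sqrt{-t}\,\Sigma_{-1}$. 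Conversely, if $\Sigma=\Sigma_{-t_0}$ for a self-shrinking flow $\Sigma_t=\sqrt{-t}\,\Sigma_{-1}$, then by the converse part of Lemma \ref{30} it satisfies Equation \ref{36}, which at $t=-t_0$ reads exactly $H=\frac{\langle\vec x,\vec n\rangle}{2t_0}$. Undoing the translation of the first step yields the stated equivalence, with $\Sigma_t=\vec x_0+\sqrt{-t}\,\Sigma_{-1}$ becoming extinct at $\vec x_0$ when $t=0$.

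The geometric content is entirely carried by Lemma \ref{30}; the only step requiring real care is analytic, namely verifying $F(\Sigma)<\infty$ and justifying the interchange of $\frac{d}{ds}$ with the integral in the weighted noncompact setting, and observing that compactly supported test functions already force the pointwise Euler--Lagrange equation. This is routine given the standing polynomial-volume-growth hypothesis, but it is precisely where that hypothesis gets used, so I would not skip it entirely.
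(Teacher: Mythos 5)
Your proposal is correct and follows essentially the same route as the paper: compute the first variation using $(d\mu)'=fH\,d\mu$, read off the Euler--Lagrange equation $H=\frac{\langle\vec x-\vec x_0,\vec n\rangle}{2t_0}$, and invoke Lemma \ref{30} together with a change of coordinates. The only differences are organizational (you translate to $\vec x_0=0$ up front rather than at the end) and that you supply the analytic justifications---compact support of the test variation, finiteness of $F(\Sigma)$ from polynomial volume growth, and the legitimacy of differentiating under the integral---which the paper's proof leaves implicit.
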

\begin{proof} Let $\Sigma_s$ be a variation of $\Sigma$ with variation vector field $\frac d {ds}\big|_{s=0} \vec x=f\vec n$.  Recall from the proof of Huisken's Monotonicity Formula that $(d\mu)'=fH d\mu.$  Then we compute.

\begin{align}\frac d{ds}\bigg|_{s=0} F_{\vec{x_0},t_0}(\Sigma_s) \notag &= (4\pi t_0)^{-\frac n 2}\int_\Sigma \left[fHe^{\frac{-|\vec x-\vec{x_0}|^2}{4t_0}} +\frac d{ds}\bigg|_{s=0}e^{\frac{-|\vec x-\vec{x_0}|^2}{4t_0}}\right]d\mu \\
\frac d{ds}\bigg|_{s=0} F_{\vec{x_0},t_0}(\Sigma_s) \notag &= (4\pi t_0)^{-\frac n 2}\int_\Sigma \left[fH+ \frac d{ds}\bigg|_{s=0}\log e^{\frac{-|\vec x-\vec{x_0}|^2}{4t_0}}\right]e^{\frac{-|\vec x-\vec{x_0}|^2}{4t_0}} d\mu \\
\frac d{ds}\bigg|_{s=0} F_{\vec{x_0},t_0}(\Sigma_s) \notag &= (4\pi t_0)^{-\frac n 2}\int_\Sigma \left[fH - \frac d{ds}\bigg|_{s=0}\frac{\langle\vec x-\vec{x_0}, \vec x-\vec{x_0}\rangle}{4t_0}\right]e^{\frac{-|\vec x-\vec{x_0}|^2}{4t_0}} d\mu \\
\frac d{ds}\bigg|_{s=0} F_{\vec{x_0},t_0}(\Sigma_s) \notag &= (4\pi t_0)^{-\frac n 2}\int_\Sigma \left[fH - \frac{\langle\vec x-\vec{x_0}, f\vec n\rangle}{2t_0}\right]e^{\frac{-|\vec x-\vec{x_0}|^2}{4t_0}} d\mu \\
\frac d{ds}\bigg|_{s=0} F_{\vec{x_0},t_0}(\Sigma_s) \notag &= (4\pi t_0)^{-\frac n 2}\int_\Sigma \left[H - \frac{\langle\vec x-\vec{x_0}, \vec n\rangle}{2t_0}\right]f e^{\frac{-|\vec x-\vec{x_0}|^2}{4t_0}} d\mu
\end{align}

Thus, $\Sigma$ is a critical point of $F_{\vec{x_0},t_0}$ if and only if $H = \frac{\langle\vec x-\vec{x_0}, \vec n\rangle}{2t_0}$.  The result follows from Lemma \ref{30} and changing coordinates.
\end{proof}

Thus, by Lemma \ref{30}, a surface is a self-shrinker if and only if it is a critical point of the functional $F_{0,1}$.

\subsection{Examples of Self-Shrinkers}\label{63}

There are several standard examples of self-shrinkers.  In Example \ref{1} we explicitly computed the mean curvature flow of a round sphere in $\R^3$ to show that a $2$-sphere of radius $2$ centered at the origin is a self-shrinker.  In general a round sphere $\mathbb S^n\subset\R^{n+1}$ is a self-shrinker if and only if it is centered at the origin and has radius $\sqrt{2n}$.  To see this, we apply Lemma \ref{30}.

Note that an $n$-sphere of radius $r$ has mean curvature $$H=\sum\limits_{i=1}^n\frac 1 r=\frac n r.$$  This is constant, and in order for $\langle\vec x,\vec n\rangle$ to be constant the sphere must be centered at the origin.  In this case $\vec x=r\vec n$.
\begin{align} \frac {\langle\vec x,\vec n\rangle} 2 \notag &= \frac r 2 \\
H \notag &= \frac {\langle\vec x,\vec n\rangle} 2 \\
\frac n r \notag &= \frac r 2 \\
r^2 \notag &= 2n
\end{align}
and the claim follows.

Recall that minimal surfaces are fixed points of MCF, since they satisfy $H\equiv 0$.  However, in general minimal surfaces do not satisfy $$\sqrt{-t}\Sigma=\Sigma.$$  In order to be invariant under dilations and hence a self-shrinker, a minimal surface $\Sigma$ must be a cone.  Then in order to be a smooth surface $\Sigma$ must be a flat plane through the origin.

We can also generalize the above two examples to show that $\Sigma^n=\mathbb S^k\times\R^{n-k}\subset\R^{n+1}$ is a self-shrinker if the $\mathbb S^k$ factor is a round $k$-sphere centered at the origin with radius $\sqrt{2k}$.  This will follow from Lemma \ref{41} below, or we can again apply Lemma \ref{30}.
\begin{align} H \notag &= \frac k{\sqrt{2k}} = \sqrt{\frac k 2} \\
\frac{\langle\vec x,\vec n\rangle}2 \notag &= \frac {\langle \sqrt{2k}\vec n,\vec n\rangle}2 = \frac{\sqrt{2k}} 2 = \sqrt {\frac k 2} \\
H \notag &= \frac {\langle\vec x,\vec n\rangle}2
\end{align}
Thus $\Sigma$ is a self-shrinker.

There is also a family of embedded self-shrinkers that are topologically $\mathbb S^1\times\mathbb S^{n-1}\subset\R^{n+1}$ for each $n\geq 2$.  These surfaces were constructed by Angenent in \cite{Ang}.  When $n=2$, the obtained surface is a self-shrinking torus.  Angenent used this torus to give another proof that Grayson's dumbbell \cite{G} splits at a singularity into two topological spheres.

The idea is to enclose the neck of the dumbbell by the self-shrinking torus and make the bells on either side large enough to each enclose a large sphere.  Make the enclosed spheres large enough that they will not become extinct until after the self-shrinking torus does.  Then the maximum principle gives that none of the surfaces can intersect any of the others, so before the torus becomes extinct the neck of the dumbbell must pinch off, leaving two topological spheres.

We mentioned above that as a surface flowing by mean curvature approaches a singularity it asymptotically approaches a self-shrinker \cite{H}.  In the case of the dumbbell described above, the singularity is of the type of a self-shrinking cylinder.  

Not all self-shrinkers are embedded.  Abresch and Langer \cite{AL} constructed self-intersecting curves in the plane that are self-shrinkers under MCF.  It is possible to obtain self-shrinking non-embedded surfaces by taking products of these curves with Euclidean factors (see Lemma \ref{41} below).  However, it is noted in \cite{CM1} that the assumption of embeddedness in the classification of generic singularities is made unnecessary by the work of Epstein and Weinstein \cite{EW}.

It should also be noted that there are more complicated self-shrinkers that can arise as singularities.  For example, Kapouleas, Kleene and M{\o}ller \cite{KKM} recently proved the existence of noncompact self-shrinkers of arbitrary genus, as long as that genus is sufficiently large.

\section{Two Types of Stability for Self-Shrinkers}

Now that we have characterized self-shrinkers as the critical points of a functional (see Theorem \ref{11}), we can define stability in the usual manner.  By taking the second variation of the functional, we obtain the following stability operator.

\begin{defn}\cite{CM1}\label{8} Define the stability operator on a self-shrinking hypersurface $\Sigma^n\subset\R^{n+1}$ as \begin{equation}\label{3}
Lf=\Delta f-\frac 1 2 \langle \vec{x},\nabla f\rangle+(|A|^2+\frac 1 2)f\end{equation}
where $|A|^2$ denotes the norm squared of the second fundamental form.  Here $\Delta$ denotes the Laplacian on the manifold $\Sigma$.  Likewise, $\nabla$ denotes the tangential gradient, rather than the full Euclidean gradient.  We call any function $u:\Sigma\rightarrow\R$ such that $Lu=0$ a Jacobi function.  We say $\Sigma$ is stable if there exists a positive Jacobi function on $\Sigma$.
\end{defn}

\begin{thm}\cite{CM1}\label{12} A self-shrinker $\Sigma$ is stable if and only if it is a local minimum of the $F_{0,1}$ functional.  That is, let $\tilde{\Sigma}$ be an arbitrary graph over $\Sigma$ of a function with sufficiently small $C^2$ norm.  Then $\Sigma$ is stable if and only if $F_{0,1}(\Sigma)\leq F_{0,1}(\tilde{\Sigma})$ for every such $\tilde{\Sigma}$.
\end{thm}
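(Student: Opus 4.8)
\emph{Setup.} By Theorem \ref{11} a self-shrinker $\Sigma$ is a critical point of $F_{0,1}$, so along a normal variation $\Sigma_s$ with $\partial_s\big|_{s=0}\vec x=f\vec n$ the first variation vanishes and the second variation is the quadratic form attached to the operator $L$ of Definition \ref{8}:
\[
\frac{d^2}{ds^2}\Big|_{s=0}F_{0,1}(\Sigma_s)=-(4\pi)^{-\frac n2}\int_\Sigma f\,(Lf)\,e^{-\frac{|\vec x|^2}4}\,d\mu=(4\pi)^{-\frac n2}\,\mathcal Q(f),
\]
where $\mathcal Q(f):=\int_\Sigma\big(|\nabla f|^2-(|A|^2+\tfrac12)f^2\big)e^{-|\vec x|^2/4}\,d\mu$ (the two expressions agree by integrating by parts, legitimately for compactly supported $f$ and then by density in the associated weighted Sobolev space). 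Thus $L$ is self-adjoint on $L^2(\Sigma,e^{-|\vec x|^2/4}d\mu)$; write $\lambda_1:=\inf\{\mathcal Q(f):\int_\Sigma f^2e^{-|\vec x|^2/4}d\mu=1\}$ for the bottom of the spectrum of $-L$. The plan is to prove the chain
\[
\Sigma\text{ a local minimum of }F_{0,1}\ \Longleftrightarrow\ \lambda_1\ge0\ \Longleftrightarrow\ \Sigma\text{ stable},
\]
in which only the implication ``$\lambda_1\ge0\Rightarrow$ local minimum'' is substantial.

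\emph{The spectral bound is equivalent to stability.} This is the Fischer--Colbrie--Schoen dichotomy for the Schr\"odinger-type operator $L$, in the form suited to the drift (Gaussian-weighted) Laplacian. If $u>0$ and $Lu=0$, I would write an arbitrary test function as $f=u\varphi$ and integrate by parts using $Lu=0$, obtaining the ground-state identity $\mathcal Q(f)=\int_\Sigma u^2|\nabla\varphi|^2 e^{-|\vec x|^2/4}d\mu\ge0$, whence $\lambda_1\ge0$. Conversely, if $\lambda_1\ge0$ I would solve the Dirichlet problem $Lu_k=0$ on an exhaustion $\Omega_k\uparrow\Sigma$ with a normalization $u_k(p_0)=1$; since $-L$ has strictly positive first Dirichlet eigenvalue on each relatively compact $\Omega_k$ (equality would force, by unique continuation, a nonzero minimizer of compact support), the $u_k$ are positive, and a Harnack-type estimate lets me pass to a limit $u>0$ with $Lu=0$, so $\Sigma$ is stable. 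The weighted version of Fischer--Colbrie--Schoen is proved exactly as the classical one, with $e^{-|\vec x|^2/4}d\mu$ playing the role of the volume measure; polynomial volume growth guarantees convergence of the relevant integrals.

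\emph{Local minimum versus the spectral bound.} That ``local minimum $\Rightarrow\lambda_1\ge0$'' is immediate: if $F_{0,1}(\Sigma)\le F_{0,1}(\tilde\Sigma)$ for every graph $\tilde\Sigma$ of a $C^2$-small function, then for each fixed $f\in C_c^\infty(\Sigma)$ the function $s\mapsto F_{0,1}(\Sigma_{sf})$ has a minimum at $s=0$, hence nonnegative second derivative there, i.e.\ $\mathcal Q(f)\ge0$; taking the infimum over $f$ gives $\lambda_1\ge0$. For the reverse I would expand $F_{0,1}$ along a graph $\Sigma_f=\{p+f(p)\vec n(p):p\in\Sigma\}$: the area element and the Gaussian factor expand as $d\mu_{\Sigma_f}=(1+\cdots)d\mu$ and $e^{-|\vec x|^2/4}\big|_{\Sigma_f}=e^{-|p|^2/4}(1-\tfrac12 f\langle p,\vec n\rangle+\cdots)$, and, cancelling the linear terms by the self-shrinker equation $H=\tfrac12\langle p,\vec n\rangle$ of Lemma \ref{30}, I would arrive at
\[
F_{0,1}(\Sigma_f)-F_{0,1}(\Sigma)=\tfrac12(4\pi)^{-\frac n2}\mathcal Q(f)+\mathcal R(f),
\]
with $\mathcal R(f)$ collecting cubic and higher terms in $(f,\nabla f)$ with coefficients built from $A$; an estimate of the shape $|\mathcal R(f)|\le C\,\|f\|_{C^2}\int_\Sigma\big(|\nabla f|^2+(1+|A|^2)f^2\big)e^{-|\vec x|^2/4}d\mu$, combined with $\mathcal Q(f)\ge0$ and a weighted Poincar\'e/spectral-gap bound for $-L$ (from the Gaussian weight and $|A|^2\ge0$, as in \cite{CM1}), would let me absorb $\mathcal R(f)$ once $\|f\|_{C^2}$ is small, giving $F_{0,1}(\Sigma_f)\ge F_{0,1}(\Sigma)$.

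\emph{The main obstacle.} The hard part is exactly this last implication: nonnegativity of the second variation does not by itself yield a genuine local minimum in infinite dimensions, so the work lies in producing the Taylor expansion above with a remainder controlled by the \emph{same} weighted energy that controls $\mathcal Q$ — delicate when $\lambda_1$ is close to $0$ and when $|A|$ is unbounded. Finally I would remark that for the self-shrinkers $\s^k\times\R^{n-k}$ central to this dissertation the direction ``stable $\Rightarrow$ local minimum'' is vacuous, since by Chapter \ref{26} they carry no positive Jacobi function; for them the useful content is the contrapositive, that any $f$ with $\mathcal Q(f)<0$ (present whenever $\lambda_1<0$) gives, after a cutoff, a graph $\tilde\Sigma$ with $F_{0,1}(\tilde\Sigma)<F_{0,1}(\Sigma)$ — immediate from the second-variation formula.
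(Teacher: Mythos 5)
The paper does not prove Theorem~\ref{12}: it is stated with a citation to~\cite{CM1} and the discussion immediately moves on to the remark that every complete self-shrinker is unstable. So there is no ``paper's own proof'' to compare against, and your sketch can only be judged on its own terms.

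The architecture you propose is the natural one, and the formal pieces are right: the weighted second-variation formula, the ground-state identity $\mathcal Q(u\varphi)=\int_\Sigma u^2|\nabla\varphi|^2 e^{-|\vec x|^2/4}\,d\mu$ from $Lu=0$ (the drift Laplacian is self-adjoint with respect to $e^{-|\vec x|^2/4}d\mu$, so the cross terms cancel exactly), the exhaustion argument for the converse, and the easy implication ``local minimum $\Rightarrow\lambda_1\ge0$.'' These steps correctly reduce everything to a Fischer--Colbrie--Schoen dichotomy for $L$ in the Gaussian-weighted setting, which is precisely the tool the paper cites from~\cite{FC} for Definition~\ref{16}.

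The genuine gap — and you flag it yourself as ``the main obstacle'' — is the implication ``$\lambda_1\ge0\Rightarrow$ local minimum.'' Your plan is to Taylor-expand $F_{0,1}$ along a normal graph and absorb the cubic-and-higher remainder $\mathcal R(f)$ into $\tfrac12\mathcal Q(f)$ via a ``weighted Poincar\'e/spectral-gap bound.'' That absorption needs $\lambda_1>0$ strictly; in the borderline case $\lambda_1=0$ the quadratic form $\mathcal Q$ is nonnegative but degenerate, and gives no norm on $f$ against which $|\mathcal R(f)|$ can be estimated. Nonnegativity of a second variation is not, by itself, a sufficient condition for a local minimum even in finite dimensions, so some device beyond the bare spectral bound is required — for instance, using the positive Jacobi field $u$ substitutionally ($f=u\varphi$) at the level of the functional, not merely the quadratic form, to exhibit a calibration- or foliation-type structure. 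As written, the sketch establishes ``stable $\Leftrightarrow\lambda_1\ge0$'' and ``local minimum $\Rightarrow\lambda_1\ge0$'' but leaves ``$\lambda_1\ge0\Rightarrow$ local minimum'' unproved, hence leaves ``stable $\Rightarrow$ local minimum'' open. A smaller point: your closing remark calls ``$\mathcal Q(f)<0$ gives a graph with strictly smaller $F$'' the contrapositive of ``stable $\Rightarrow$ local minimum,'' but it is the contrapositive of the \emph{other} direction, ``local minimum $\Rightarrow$ stable'' (equivalently $\lambda_1\ge0$).
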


However, it turns out that with this standard definition, every complete self-shrinker is unstable.  This is because it is always possible to decrease the functional $F_{0,1}$ by translating $\Sigma$ in either space or time.  Since $\Sigma$ is a self-shrinker, by ``translating in time'' we mean dilating $\Sigma$.  The following notion of F-stability compensates for this inherent instability by defining a self-shrinker to be F-stable if translations in space and time are the only local ways to decrease the $F_{0,1}$ functional.

\begin{defn}\cite{CM1} We say a self-shrinker $\Sigma$ is F-stable if for every normal variation $f\vec{n}$ of $\Sigma$ there exist variations $x_s$ of $x_0$ and $t_s$ of $t_0$ that make $F''=(F_{x_s,t_s}(\Sigma+sf\vec{n}))''\geq 0$ at $s=0$.
\end{defn}

In this dissertation we will focus exclusively on the standard notion of stability from Definition \ref{8}.

\begin{defn}\label{10} If $\tilde L$ is any operator on a surface $\Sigma$, then we say $u$ is an eigenfunction of $\tilde L$ if $u\in L^2(\Sigma)$, $u$ is not identically $0$, and $$\tilde L u=-\lambda u$$ for some constant $\lambda$.  In this case we say that $\lambda$ is an eigenvalue of $\tilde L$ corresponding to the eigenfunction $u$.
\end{defn}

\begin{defn}\cite{FC}\label{16} Let $\Sigma$ be a self-shrinker, and let $L$ be the stability operator on $\Sigma$.  The index of $L$ on $\Sigma$ is defined to be the supremum over compact domains of $\Sigma$ of the number of negative eigenvalues of $L$ with 0 boundary data.  We will also call this the stability index of $\Sigma$.
\end{defn}

Notice that a self-shrinker is stable if and only if its stability index is 0.

\begin{thm}\label{45}\cite{FC} A self-shrinker with finite stability index is stable outside of some compact set.
\end{thm}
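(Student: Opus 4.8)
The plan is to establish this as the self-shrinker analogue of the classical Fischer-Colbrie--Schoen theorem on complete stable minimal surfaces, adapting that argument to the drift operator $L$. The underlying structure is that $L$ is self-adjoint with respect to the weighted measure $d\nu=e^{-|\vec x|^2/4}\,d\mu$: integration by parts shows the drift term $-\tfrac12\langle\vec x,\nabla f\rangle$ is exactly what is needed so that $\int_\Sigma \phi\, L\psi\,d\nu=\int_\Sigma\psi\,L\phi\,d\nu$ and $-\int_\Sigma\phi\,L\phi\,d\nu=\int_\Sigma\bigl(|\nabla\phi|^2-(|A|^2+\tfrac12)\phi^2\bigr)\,d\nu=:Q(\phi)$ for $\phi\in C_c^\infty(\Sigma)$. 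For a bounded domain $\Omega$ the Dirichlet eigenvalue problem for $-L$ on $\Omega$ has the usual properties (discrete spectrum, simple first eigenvalue $\lambda_1$, domain monotonicity, min--max), and the number of negative eigenvalues of $L$ on $\Omega$ in the sense of Definition \ref{10} equals the Morse index of $Q$ on $C_c^\infty(\Omega)$. Thus ``finite stability index'' means there is an integer $m$ such that $Q$ is not negative definite on any $(m+1)$-dimensional subspace of $\bigcup_\Omega C_c^\infty(\Omega)$.

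\emph{Step 1 (localize the instability).} I claim there is a compact set $K\subset\Sigma$ with $Q(\phi)\ge 0$ for every $\phi\in C_c^\infty(\Sigma\setminus K)$. If not, fix an exhaustion $K_1\subset K_2\subset\cdots$ of $\Sigma$ by compact sets (using completeness of $\Sigma$). Failure of $K_1$ gives $\phi_1\in C_c^\infty(\Sigma\setminus K_1)$ with $Q(\phi_1)<0$; its support lies in some $K_{N_1}$, and failure of $K_{N_1}$ gives $\phi_2\in C_c^\infty(\Sigma\setminus K_{N_1})$ with $Q(\phi_2)<0$ and $\mathrm{supp}\,\phi_2$ disjoint from $\mathrm{supp}\,\phi_1$; iterating produces $\phi_1,\phi_2,\dots$ with pairwise disjoint supports and $Q(\phi_i)<0$. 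For each $k$, $V_k=\mathrm{span}\{\phi_1,\dots,\phi_k\}$ is $k$-dimensional and $Q\bigl(\sum_i c_i\phi_i\bigr)=\sum_i c_i^2\,Q(\phi_i)<0$ unless all $c_i=0$; choosing a geodesic ball $\Omega$ containing $\bigcup_i\mathrm{supp}\,\phi_i$, the min--max principle shows $L$ has at least $k$ negative Dirichlet eigenvalues on $\Omega$, so the stability index is $\ge k$ for every $k$, contradicting finiteness. Enlarging $K$ slightly we may also assume $\partial K$ is smooth.

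\emph{Step 2 (produce a positive Jacobi function on $\Sigma\setminus K$).} Set $\Omega_\infty=\Sigma\setminus K$ and exhaust it by smooth precompact domains $\Omega_1\Subset\Omega_2\Subset\cdots$ with $\overline{\Omega_j}\subset\Omega_{j+1}$. Since $C_c^\infty(\Omega_{j+1})\subset C_c^\infty(\Omega_\infty)$ we have $\lambda_1(\Omega_{j+1})\ge 0$, and strict domain monotonicity gives $\lambda_1(\Omega_j)>\lambda_1(\Omega_{j+1})\ge 0$; in particular $0$ is not a Dirichlet eigenvalue of $-L$ on $\Omega_j$, so $Lu_j=0$ in $\Omega_j$ with $u_j\equiv 1$ on $\partial\Omega_j$ has a unique solution. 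On the open set $\Omega^-=\{u_j<0\}$ the function $u_j$ vanishes on $\partial\Omega^-$, so $0=-\int_{\Omega^-}u_j\,Lu_j\,d\nu\ge\lambda_1(\Omega^-)\int_{\Omega^-}u_j^2\,d\nu>0$ unless $\Omega^-=\emptyset$; hence $u_j\ge 0$, and since $Lu_j=0$ rewrites as $\Delta u_j-\tfrac12\langle\vec x,\nabla u_j\rangle=-(|A|^2+\tfrac12)u_j\le 0$, the strong minimum principle forces $u_j>0$ in $\Omega_j$. Normalize $\tilde u_j=u_j/u_j(p_0)$ for a fixed $p_0\in\Omega_1$. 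The Harnack inequality for the locally uniformly elliptic operator $L$ bounds the $\tilde u_j$ locally uniformly above on each $\Omega_k$, and interior Schauder estimates give $C^{2,\alpha}_{\mathrm{loc}}$ bounds; a diagonal subsequence converges in $C^2_{\mathrm{loc}}(\Omega_\infty)$ to $u\ge 0$ with $Lu=0$ and $u(p_0)=1$, and the strong minimum principle upgrades this to $u>0$ on $\Sigma\setminus K$. Thus $u$ is a positive Jacobi function on $\Sigma\setminus K$, i.e., $\Sigma$ is stable outside the compact set $K$.

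\emph{Main obstacle.} Step 1 is essentially soft (a dimension count against the definition of index), so the real content lies in the Fischer-Colbrie--Schoen construction of Step 2. The delicate points are the generalized maximum principle securing $u_j>0$ (exactly where the positivity $\lambda_1(\Omega_j)>0$, and hence Step 1, is used) and the Harnack/elliptic compactness needed to pass to the limit without $u$ collapsing to the zero function. Both must be carried out for the drift operator $L$ rather than the plain Laplacian, but since the drift coefficient $-\tfrac12\vec x$ and the potential $|A|^2+\tfrac12$ are smooth, the local elliptic theory (Harnack, Schauder, strong maximum principle) is identical to the classical case, so no analytic input is needed beyond the self-adjointness of $L$ with respect to $d\nu$.
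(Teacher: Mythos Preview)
Your proof is correct. Step 1 is essentially the paper's entire argument: both produce a contradiction by building disjointly supported test functions with $Q<0$ and appealing to the definition of index, though the paper phrases this via an annular exhaustion $B^\Sigma_{\rho_k}(0)\setminus B^\Sigma_{\rho_{k-1}}(0)$ (doubling the radius at which stability first fails, then repeating on the complement) rather than your nested compacta $K_1\subset K_{N_1}\subset\cdots$. Your Step 2, the explicit Fischer-Colbrie--Schoen construction of a positive Jacobi function on $\Sigma\setminus K$, goes beyond what the paper does: the paper simply asserts the equivalence ``stable $\iff$ stability index $0$'' immediately after Definition \ref{16} and uses it without proof, so once the complement of a compact set has no negative Dirichlet eigenvalues the paper declares it stable. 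Your version is thus more self-contained, supplying the elliptic limiting argument (solve $Lu_j=0$ with boundary value $1$, use $\lambda_1(\Omega_j)>0$ and the strong minimum principle for positivity, then Harnack and Schauder to pass to the limit) that the paper defers to the cited reference \cite{FC}. The trade-off is that your write-up is longer, while the paper's annular version makes the finite-index contradiction slightly more concrete by tying each negative eigenvalue to a specific geodesic annulus.
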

\begin{proof}
Fix a point $0\in \Sigma$, and let $B^\Sigma_\rho (0)$ denote the intrinsic ball of radius $\rho$ centered at 0.  We will use the fact that $L$ is stable on any set of sufficiently small area.

Thus, for $\rho$ sufficiently small $L$ is stable on $B_\rho(0)$.  Let $\rho_1=2\sup\{\rho|L\text{ is stable on } B_\rho(0)\}$.  If $\rho_1$ is infinite, then $L$ is stable on all of $\Sigma$ and we are done.  Otherwise let $\rho_2=2\sup\{\rho>\rho_1|L\text{ is stable on } B_\rho(0)\setminus B_{\rho_1}(0)\}$.  If $\rho_2$ is infinite, then $L$ is stable on $\Sigma\setminus B_{\rho_1}(0)$ and we are done.  Otherwise repeat this construction to obtain $\{\rho_k\}$ such that $L$ is strictly unstable on each $B_{\rho_k}(0)\setminus B_{\rho_{k-1}}(0)$.

$L$ has at least one negative eigenvalue for each $B_{\rho_k}(0)\setminus B_{\rho_{k-1}}(0)$ (assuming $\rho_k<\infty$), so let $f_k$ denote a corresponding eigenfunction on $B_{\rho_k}(0)\setminus B_{\rho_{k-1}}(0)$.  The $f_k$ are defined on sets which are disjoint except for sets of measure 0, so the $f_k$ are independent.  By assumption the index of $L$ is finite, so there must be only finitely many $f_k$.  Thus some $\rho_{k+1}$ is infinite, and $L$ is stable outside the compact set $C=\overline{B_{\rho_k}(0)}$.
\end{proof}

\newpage

\chapter{Low Index Self-Shrinkers in Arbitrary Dimension}\label{26}

\section{Self-Shrinkers Splitting off a Line}

This chapter is devoted to proving the following theorem.

\begin{thm}\label{37} Let $\Sigma^n\subset\R^{n+1}$ be a smooth complete embedded self-shrinker without boundary and with polynomial volume growth.

If $\Sigma=\R^n$ is a flat hyperplane through the origin, then the index of $\Sigma$ is $1$.

If $\Sigma=\mathbb S^k\times\R^{n-k}$ for some $1\leq k\leq n$, then the index of $\Sigma$ is $n+2$.

If $\Sigma\neq \mathbb S^k\times\R^{n-k}$, then the index of $\Sigma$ is at least $n+3$.

If $\Sigma\neq \mathbb S^k\times\R^{n-k}$ and $\Sigma$ also splits off a line, then the index of $\Sigma$ is at least $n+4$.
\end{thm}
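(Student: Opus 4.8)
The plan is to exploit the product structure forced by the hypothesis and reduce to the third part of the theorem. Write $\Sigma=\Sigma'\times\R$ with $\Sigma'\subset\R^n$ an $(n-1)$-dimensional slice. First I would check that $\Sigma'$ inherits smoothness, completeness, embeddedness, emptiness of boundary, and polynomial volume growth, and that $\Sigma'$ is itself a self-shrinker: for $\vec x=(\vec x',s)$ one has $\vec n_\Sigma=(\vec n_{\Sigma'},0)$, $H_\Sigma=H_{\Sigma'}$, and $\langle\vec x,\vec n_\Sigma\rangle=\langle\vec x',\vec n_{\Sigma'}\rangle$, so \eqref{2} for $\Sigma$ is exactly \eqref{2} for $\Sigma'$ and Lemma \ref{30} applies. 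Also $\Sigma'\neq\mathbb S^{k}\times\R^{n-1-k}$ for every $k$, since otherwise $\Sigma$ would be a generalized cylinder; in particular $\Sigma'$ is not a hyperplane. When $n=2$ the only complete embedded self-shrinking curves are the line through the origin and the circle of radius $\sqrt2$, each of which would make $\Sigma$ a generalized cylinder, so the statement is vacuous and I may assume $n\geq3$. Then the third part of the theorem applies to $\Sigma'$: $\operatorname{index}\Sigma'\geq (n-1)+3=n+2$.

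Next I would separate variables for the stability operator. Writing $L$ and $L'$ for the stability operators on $\Sigma$ and $\Sigma'$, a direct computation using $|A_\Sigma|^2=|A_{\Sigma'}|^2$ gives, for $f(x',s)=g(x')h(s)$,
\[
Lf \;=\; h\,(L'g)+g\,(\mathcal L_0 h),\qquad \mathcal L_0 h:=h''-\tfrac{s}{2}h',
\]
where $\mathcal L_0$ is the Ornstein--Uhlenbeck operator on $\R$, self-adjoint with respect to $e^{-s^2/4}\,ds$, whose eigenvalues are $0,\tfrac12,1,\dots$; on an interval $(-R,R)$ with Dirichlet data the first two eigenvalues $\nu_0(R),\nu_1(R)\geq0$ of $-\mathcal L_0$ converge to $0$ and $\tfrac12$ as $R\to\infty$. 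Consequently, on a product domain $\Omega'\times(-R,R)$ the negative Dirichlet eigenvalues of $L$ are exactly the sums $\mu+\nu<0$ with $\mu$ a Dirichlet eigenvalue of $L'$ on $\Omega'$ and $\nu\in\{\nu_m(R)\}$. Choosing a compact $\Omega'\subset\Sigma'$ with at least $n+2$ negative eigenvalues (possible since $\operatorname{index}\Sigma'\geq n+2$) and $R$ large, each negative $\mu$ already contributes the pair $(\mu,\nu_0(R))$, giving $n+2$ negative eigenvalues of $L$; and every negative $\mu$ with $|\mu|>\tfrac12$ contributes in addition the pair $(\mu,\nu_1(R))$. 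So it suffices to arrange that $\Omega'$ also has its two lowest eigenvalues below $-\tfrac12$.

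For the last point I would use the mean curvature. Recall $L'H_{\Sigma'}=H_{\Sigma'}$, so $H_{\Sigma'}$ is an $L'$-eigenfunction of eigenvalue $-1$; it is not identically zero (else $\Sigma'$ is a hyperplane) and, by the Colding--Minicozzi classification of mean-convex self-shrinkers \cite{CM1}, $H_{\Sigma'}$ must change sign because $\Sigma'$ is not a generalized cylinder. Pick bump functions $\chi^{\pm}\in C_c^\infty(\{\pm H_{\Sigma'}>0\})$; their supports are disjoint, and $L'H_{\Sigma'}=H_{\Sigma'}$ together with integration by parts (valid since $H_{\Sigma'},\nabla H_{\Sigma'}\in L^2(e^{-|x'|^2/4})$ by polynomial volume growth) gives
\[
\int_{\Sigma'}\!\Big(|\nabla(\chi^{\pm}H_{\Sigma'})|^2-(|A_{\Sigma'}|^2+\tfrac12)(\chi^{\pm}H_{\Sigma'})^2\Big)e^{-|x'|^2/4}\;=\;-\!\int_{\Sigma'}(\chi^{\pm}H_{\Sigma'})^2e^{-|x'|^2/4}\;+\;\int_{\Sigma'}|\nabla\chi^{\pm}|^2 H_{\Sigma'}^2\,e^{-|x'|^2/4},
\]
so the Rayleigh quotient of $\chi^{\pm}H_{\Sigma'}$ tends to $-1$ as the cutoffs exhaust the two nodal regions. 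Taking the cutoffs good enough that both quotients are $<-\tfrac12$, the two-dimensional span of $\chi^{+}H_{\Sigma'}$ and $\chi^{-}H_{\Sigma'}$ (disjoint supports, hence orthogonal for both the weighted $L^2$ inner product and the quadratic form) shows by Courant--Fischer that $\lambda_2<-\tfrac12$ on any compact domain containing the two supports. Enlarging such a domain to also capture $n+2$ negative eigenvalues—enlarging only lowers Dirichlet eigenvalues—produces the required $\Omega'$, and the previous paragraph then yields $\operatorname{index}\Sigma\geq(n+2)+2=n+4$.

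The main obstacle is this last step: producing a \emph{second} eigenvalue of $L'$ below $-\tfrac12$. The eigenvalue $-1$ from $H_{\Sigma'}$ is automatic, but naive second candidates—the constant function, or a compactly supported near-minimizer of the Rayleigh quotient—do not in general combine with $H_{\Sigma'}$ into a two-dimensional negative-definite subspace. It is precisely the sign change of $H_{\Sigma'}$ (hence the invocation of the mean-convex classification) that makes the disjoint-support trick work, and the cutoff estimate showing the Rayleigh quotient of $\chi^{\pm}H_{\Sigma'}$ decreases to $-1$—rather than staying bounded away from it—is the technical heart, resting on $H_{\Sigma'}$ and its gradient lying in the Gaussian-weighted $L^2$ space.
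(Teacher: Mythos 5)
Your proposal addresses only the fourth part of the theorem (index $\geq n+4$ when $\Sigma=\Sigma'\times\R$ is not a generalized cylinder), treating the first three as given; for that part your strategy — apply $\operatorname{index}(\Sigma')\geq(n-1)+3=n+2$ from the third part, then extract two further negative eigenvalues from the $\R$-factor — is the paper's. Where you take a genuinely different technical route is the transfer from $\Sigma'$ to $\Sigma$. The paper applies Lemma~\ref{41}, which postulates a complete orthonormal eigenbasis of $L'$ and exhibits the $\Sigma'\times\R$ spectrum as the Minkowski sum of factor spectra; the eigenvalues $-1$ (from $L'H_{\Sigma'}=H_{\Sigma'}$) and $\mu_1<-1$ (Theorem~\ref{43}) then shift by $1/2$ and contribute two new negative entries. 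You instead work with Dirichlet eigenvalues on compact boxes $\Omega'\times(-R,R)$, pairing the $n+2$ negative eigenvalues of $L'$ on $\Omega'$ with $\nu_0(R)\searrow 0$ and the two eigenvalues below $-\tfrac12$ additionally with $\nu_1(R)\searrow\tfrac12$. This is cleaner on a point the paper leaves implicit: the index is defined via Dirichlet problems on compact sets (Definition~\ref{16}), and your computation needs only the unconditional product decomposition of Dirichlet eigenvalues on a compact product domain, whereas Lemma~\ref{41}'s hypothesis — a complete orthonormal eigenbasis of $L'$ on an arbitrary noncompact $\Sigma'$ — is not verified in the paper. You also replace the citation of Theorem~\ref{43} with a self-contained cutoff argument on the two sign regions of $H_{\Sigma'}$ (which exist by Theorem~\ref{33}); your integration-by-parts identity is correct, and the nodal-domain mechanism you invoke is precisely what underlies Theorem~\ref{43}, but as you yourself flag, the estimate making the cutoff error term vanish is the unfinished step. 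Since the paper already supplies Theorem~\ref{43}, the shortest way to close that gap is to cite it: $L'H_{\Sigma'}=H_{\Sigma'}$ gives $-1$ as an eigenvalue, Theorem~\ref{43} puts the bottom eigenvalue strictly below $-1$, so the two lowest Dirichlet eigenvalues of $L'$ on a large enough $\Omega'$ are both below $-\tfrac12$ and your counting goes through.
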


Note that in the above theorem each $\mathbb S^k$ must have radius $\sqrt{2k}$ in order for $\Sigma^n$ to be a self-shrinker.  We first prove a theorem of Colding and Minicozzi that provides possible eigenfunctions of $L$ on self-shrinkers.

\begin{thm}\label{38}\cite{CM1} Suppose $\Sigma^n\subset\R^{n+1}$ is a smooth self-shrinking hypersurface without boundary.  Then the mean curvature $H$ satisfies $LH=H$.  Also, for any constant vector field $\vec v$ we have $L\langle\vec v,\vec n\rangle=\frac 1 2 \langle\vec v,\vec n\rangle.$  In particular, any of these functions which is not identically $0$ and which is in $L^2(\Sigma)$ must be an eigenfunction with negative eigenvalue.
\end{thm}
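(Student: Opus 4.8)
The plan is to rewrite the stability operator as $L = \mathcal L + |A|^2 + \tfrac12$, where $\mathcal L f = \Delta f - \tfrac12\langle\vec x,\nabla f\rangle$ is the drift Laplacian, and to let $\mathcal L$, $\Delta$, $\nabla$ act coordinate-wise on $\R^{n+1}$-valued functions on $\Sigma$. The substance of the argument is the pair of identities $\mathcal L\langle\vec v,\vec n\rangle = -|A|^2\langle\vec v,\vec n\rangle$ for constant $\vec v$ and $\mathcal L\langle\vec x,\vec n\rangle = H - 2|A|^2 H$. Granting these, $L\langle\vec v,\vec n\rangle = -|A|^2\langle\vec v,\vec n\rangle + (|A|^2+\tfrac12)\langle\vec v,\vec n\rangle = \tfrac12\langle\vec v,\vec n\rangle$, while $2H = \langle\vec x,\vec n\rangle$ by \eqref{2} gives $\mathcal L H = \tfrac12 H - |A|^2 H$ and hence $LH = \tfrac12 H - |A|^2 H + (|A|^2+\tfrac12)H = H$.

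The first step is to collect pointwise identities valid on any hypersurface. From \eqref{35} one has $\nabla_i\langle\vec v,\vec n\rangle = -a_{ij}\langle\vec v,\vec e_j\rangle$, and differentiating the orthogonal splitting $\vec v = \vec v^\top + \langle\vec v,\vec n\rangle\vec n$ yields $\nabla_i(\vec v^\top)_j = \langle\vec v,\vec n\rangle a_{ij}$; the same computation for $\vec x$, using that the ambient derivative of the position field along $\vec e_i$ is $\vec e_i$, gives $\nabla_i(\vec x^\top)_j = \delta_{ij} + \langle\vec x,\vec n\rangle a_{ij}$. Tracing the Codazzi equation with the convention $H = -a_{ii}$ gives $\nabla_i a_{ij} = -\nabla_j H$. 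Feeding these into the definition of the Laplacian produces $\Delta\langle\vec v,\vec n\rangle = \langle\nabla H,\vec v\rangle - |A|^2\langle\vec v,\vec n\rangle$ and $\Delta\langle\vec x,\vec n\rangle = \langle\nabla H,\vec x\rangle + H - 2|A|^2 H$.

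The second step is where the self-shrinker hypothesis enters. Differentiating \eqref{2} gives the crucial relation $\nabla_j H = -\tfrac12 a_{jk}(\vec x^\top)_k$, i.e.\ $\nabla H = -\tfrac12 A(\vec x^\top,\cdot)$. This lets one evaluate the drift terms: $\langle\vec x,\nabla\langle\vec v,\vec n\rangle\rangle = -A(\vec x^\top,\vec v^\top) = 2\langle\nabla H,\vec v\rangle$ and likewise $\langle\vec x,\nabla\langle\vec x,\vec n\rangle\rangle = -A(\vec x^\top,\vec x^\top) = 2\langle\nabla H,\vec x\rangle$. Subtracting half of each from the corresponding Laplacian makes the $\langle\nabla H,\cdot\rangle$ terms cancel, leaving exactly the two identities for $\mathcal L$ quoted above. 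The final assertion of the theorem is then immediate from Definition \ref{10}: $LH = -(-1)H$ and $L\langle\vec v,\vec n\rangle = -(-\tfrac12)\langle\vec v,\vec n\rangle$, so whenever one of these functions is not identically zero and lies in $L^2(\Sigma)$ it is an eigenfunction, with eigenvalue $-1$ or $-\tfrac12$ respectively — in particular negative.

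I expect the only real friction to be sign and index bookkeeping: extracting $\nabla_i a_{ij} = -\nabla_j H$ from Codazzi consistently with the paper's sign conventions, computing $\nabla_i(\vec x^\top)_j$ correctly, and being careful that $\mathcal L$, $\Delta$, $\nabla$ applied coordinate-wise to vector-valued functions commute with pairing against a constant vector. Conceptually there is no obstacle: once $\nabla H = -\tfrac12 A(\vec x^\top,\cdot)$ is available, the drift term cancels the gradient term, and the $|A|^2$ produced differentiating $\vec n$ is exactly absorbed by the $|A|^2$ sitting in $L$.
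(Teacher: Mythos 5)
Your proposal is correct, and the underlying computation is essentially the same as the paper's: differentiate $\langle\vec v,\vec n\rangle$ and $H=\tfrac12\langle\vec x,\vec n\rangle$ twice, use Codazzi to convert $\nabla_i a_{ij}$ into $-\nabla_j H$, and observe that the drift term $-\tfrac12\langle\vec x,\nabla\cdot\rangle$ exactly cancels the $\langle\nabla H,\cdot\rangle$ contribution once the self-shrinker relation $\nabla H=-\tfrac12 A(\vec x^\top,\cdot)$ is in hand. The only difference is presentational: you factor $L=\mathcal L+|A|^2+\tfrac12$ and compute $\mathcal L$ on each function, whereas the paper applies $L$ directly and lets the $\pm\tfrac12\langle\vec x,\nabla\cdot\rangle$ terms cancel in the final line; the key identities and the sign bookkeeping are identical.
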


\begin{proof}\cite{CM1} Let $\vec p\in\Sigma$ be an arbitrary point, and let $\{\vec {e_i}\}_{i=1}^n$ be an orthonormal frame for $\Sigma$ such that $\nabla_{\vec{e_i}}^{\top}\vec{e_j}(\vec p)=0$.

We begin by showing $LH=H$.  This will show that if $H\in L^2(\Sigma)$ is not identically $0$, then $H$ is an eigenfunction of $L$ with eigenvalue $-1$.  By Equation \ref{2}
\begin{align} H \notag &=\frac 1 2 \langle\vec x,\vec n\rangle \\
\nabla_{\vec{e_i}}H \notag &= \frac 1 2 [\langle \nabla_{\vec{e_i}}\vec x,\vec n\rangle+\langle\vec x,\nabla_{\vec{e_i}}\vec n\rangle ] \\
\nabla_{\vec{e_i}}H \notag &= \frac 1 2 [\langle\vec{e_i},\vec n\rangle-a_{ij}\langle\vec x,\vec{e_j}\rangle]
\end{align}

The last line follows from Equation \ref{35}.  We note that $\langle\vec{e_i},\vec n\rangle=0$, so we continue.
\begin{align} \nabla_{\vec{e_i}}H \notag &= -\frac 1 2 a_{ij}\langle\vec x,\vec{e_j}\rangle \\
\nabla_{\vec{e_k}}\nabla_{\vec{e_i}}H \notag &= -\frac 1 2 a_{ij,k}\langle\vec x,\vec{e_j}\rangle-\frac 1 2 a_{ij}\langle\nabla_{\vec{e_k}}\vec x,\vec{e_j}\rangle-\frac 1 2 a_{ij}\langle\vec x,\nabla_{\vec{e_k}}\vec{e_j}\rangle \\
\nabla_{\vec{e_k}}\nabla_{\vec{e_i}}H \notag &= -\frac 1 2 a_{ij,k}\langle\vec x,\vec{e_j}\rangle-\frac 1 2 a_{ij}\langle \vec{e_k},\vec{e_j}\rangle-\frac 1 2 a_{ij}a_{kj}\langle\vec x,\vec n\rangle
\end{align}

Setting $k=i$ and summing over $i$, we obtain $$\Delta H =-\frac 1 2 a_{ij,i}\langle\vec x,\vec{e_j}\rangle-\frac 1 2 a_{ii}-\frac 1 2 |A|^2\langle\vec x,\vec n\rangle.$$  By the Codazzi Equation, we get that $a_{ij,i}=a_{ii,j}$.  Combining this with the fact that $H=-a_{ii}$ yields
\begin{align} \Delta H \notag &= \frac 1 2 \langle\vec x,\nabla H\rangle + \frac 1 2 H -|A|^2 H \\
L H \notag &= \Delta H -\frac 1 2 \langle \vec x,\nabla H\rangle +(\frac 1 2 +|A|^2)H \\
L H \notag &= \frac 1 2 \langle\vec x,\nabla H\rangle + \frac 1 2 H -|A|^2 H -\frac 1 2 \langle \vec x,\nabla H\rangle +(\frac 1 2 +|A|^2)H \\
L H \notag &= H
\end{align}

We now consider a constant vector field $\vec v$.  Let $f=\langle\vec v,\vec n\rangle$, and we compute.
\begin{align} \nabla_{\vec{e_i}} f \notag &= \langle\vec v,\nabla_{\vec{e_i}}\vec n\rangle \\
\nabla_{\vec{e_i}} f \notag &= -a_{ij}\langle\vec v,\vec{e_j}\rangle \\
\nabla_{\vec{e_k}}\nabla_{\vec{e_i}}f \notag &= -a_{ij,k}\langle\vec v,\vec{e_j}\rangle -a_{ij}a_{kj}\langle\vec v,\vec n\rangle \\
\Delta f \notag &= \langle \vec v,\nabla H\rangle - |A|^2 f
\end{align}

From above, we know that
\begin{align} \nabla_{\vec{e_i}}H \notag &= -\frac 1 2 a_{ij}\langle\vec x,\vec{e_j}\rangle \\
\nabla H \notag &= -\frac 1 2 a_{ij}\langle\vec x,\vec{e_j}\rangle\vec{e_i} \\
\langle\vec v,\nabla H\rangle \notag &= -\frac 1 2 a_{ij}\langle\vec x,\vec{e_j}\rangle\langle\vec v,\vec{e_i}\rangle \\
\langle\vec v,\nabla H\rangle \notag &= \frac 1 2 \langle\vec x,-a_{ij}\langle\vec v,\vec{e_i}\rangle\vec{e_j}\rangle \\
\langle\vec v,\nabla H\rangle \notag &= \frac 1 2 \langle\vec x,\nabla f\rangle \\
Lf \notag &= \Delta f - \frac 1 2 \langle\vec x,\nabla f\rangle +(|A|^2+\frac 1 2)f \\
Lf \notag &= \frac 1 2 \langle\vec x,\nabla f\rangle -|A|^2 f - \frac 1 2 \langle\vec x,\nabla f\rangle +(|A|^2+\frac 1 2)f \\
Lf \notag &= \frac 1 2 f
\end{align}
\end{proof}

In \cite{KKM} it is noted that on the plane $\R^2\subset\R^3$, when conjugated by a Gaussian $L$ is equal to the Hamiltonian operator for the two-dimensional quantum harmonic oscillator plus a constant.  Since the quantum harmonic oscillator is well understood, this technique helps us understand the operator $L$.  In the following we adapt this idea to a much larger class of self-shrinking hypersurfaces of $R^{n+1}$.  With this in mind, we now discuss the well known eigenvalues and eigenfunctions of the quantum harmonic oscillator Hamiltonian.

\begin{defn}[Hermite Polynomials]\label{44} \cite{T}
Consider the operator on $\R$ given by $$E=-\left(\frac 1{2m}\right)\frac{d^2}{dx^2}+\frac 1 2 m \omega^2 x^2.$$  In quantum mechanics, the first term is the kinetic energy operator (using the convention that $\hbar =1$), and the second term denotes the potential energy of the one dimensional harmonic oscillator.  The sum of these is the Hamiltonian operator, which gives the total energy of the harmonic oscillator.  The possible energy levels of the oscillator are then exactly the eigenvalues of the operator $E$, and the eigenfunctions are the states corresponding to these energy levels.
Due to its importance in quantum mechanics, this problem has been extensively studied.

We define the $k$th Hermite polynomial to be $$H_k(z)=(-1)^k e^{z^2}\frac {d^k}{dz^k}e^{-z^2}.$$  Then the eigenvalues of the Hamiltonian $E$ are given by $$\left\{-\omega\left(k+\frac 1 2\right):k=0,1,2,3,...\right\}$$ and the corresponding eigenfunctions are $$\psi_k=\sqrt{\frac 1 {2^k(k!)}}\left(\frac{m\omega}{\pi}\right)^{\frac 1 4} e^{-\frac{m\omega x^2}2}H_k(\sqrt{m\omega}x).$$
These eigenfunctions form an orthonormal basis of $L^2(\R)$.  Note that these eigenvalues differ from the quantum mechanical energy levels by a factor of $-1$ due to our choice of sign convention in Definition \ref{10}.

For future reference, we record the first few Hermite polynomials.  In our applications below, we will be interested in the case when $m=\omega=\frac 1 2$, so we will record the polynomials $H_k(\sqrt{m\omega}x)=H_k(\frac x 2).$
\begin{align} H_0\left(\frac x 2\right) \notag &= 1 \\
H_1\left(\frac x 2\right) \notag &= x \\
H_2\left(\frac x 2\right) \notag &= x^2-2 \\
H_3\left(\frac x 2\right) \notag &= x^3-6x
\end{align}

\end{defn}

We now prove a very helpful lemma.

\begin{lem}\label{41} Suppose $\Sigma^{n-1}\subset\R^n$ is a smooth complete embedded self-shrinker without boundary and with polynomial volume growth.  Suppose $\{g_m\}_{m=0}^{\infty}$ is an orthonormal basis of the weighted space $L^2(\Sigma)$ made up of eigenfunctions of $L$ satisfying $$L g_m=-\lambda_m g_m.$$  Then $\Sigma^{n-1}\times\R\subset\R^{n+1}$ is also a self-shrinker. Furthermore, the eigenvalues of $L$ on $\Sigma^{n-1}\times\R\subset\R^{n+1}$ are
$$\left\{\lambda_m+\frac 1 2 k:m,k=0,1,2,3,...\right\},$$ and the corresponding eigenfunctions are $g_m H_k(\frac {x_n}2).$  Here $x_n$ is the coordinate function on the $\R$ factor of $\Sigma^{n-1}\times\R$, and $H_k$ denotes the $k$th Hermite polynomial.  We have omitted the conventional normalizations for simplicity.
\end{lem}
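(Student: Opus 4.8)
By Lemma \ref{30} it suffices to verify $H=\tfrac12\langle\vec x,\vec n\rangle$ on $\Sigma^{n-1}\times\R\subset\R^{n+1}$. Write a point of the product as $\vec x=(\vec p,x_n)$ with $\vec p\in\Sigma^{n-1}\subset\R^n$. The unit normal of $\Sigma^{n-1}\times\R$ at such a point is the unit normal $\vec n$ of $\Sigma^{n-1}$ at $\vec p$, regarded as a vector in $\R^n\subset\R^{n+1}$ with vanishing last component. Since the new $\R$-factor is flat and totally geodesic, the mean curvature of the product at $(\vec p,x_n)$ equals the mean curvature of $\Sigma^{n-1}$ at $\vec p$, while $\langle\vec x,\vec n\rangle=\langle\vec p,\vec n\rangle$. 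Hence $H=\tfrac12\langle\vec x,\vec n\rangle$ holds on the product precisely because $\Sigma^{n-1}$ is a self-shrinker. Completeness, embeddedness, absence of boundary, and polynomial volume growth all pass to the product, so by Lemma \ref{30} the stability operator $L$ of Definition \ref{8} is defined on it.

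\textbf{Step 2 (splitting $L$).} For the product metric one has $\Delta_{\Sigma^{n-1}\times\R}f=\Delta_{\Sigma^{n-1}}f+\partial_{x_n}^2 f$, the tangential gradient splits in the same way, $|A|^2$ on the product coincides with $|A|^2$ on $\Sigma^{n-1}$ (the $\R$-direction carries no second fundamental form), and $\langle\vec x,\nabla f\rangle=\langle\vec p,\nabla_{\Sigma^{n-1}}f\rangle+x_n\,\partial_{x_n}f$. Substituting these into Equation \ref{3}, and being careful that the additive constant $\tfrac12$ is counted only once, gives
$$L_{\Sigma^{n-1}\times\R}=L_{\Sigma^{n-1}}+\mathcal{O},\qquad \mathcal{O}:=\partial_{x_n}^2-\tfrac12\,x_n\,\partial_{x_n},$$
where $L_{\Sigma^{n-1}}$ acts only in the $\vec p$ variables and $\mathcal{O}$ only in $x_n$.

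\textbf{Step 3 (separation of variables).} Differentiating $h(x_n)=H_k(\tfrac{x_n}{2})$ and using the Hermite equation $H_k''(z)-2zH_k'(z)+2kH_k(z)=0$, a short computation yields $\mathcal{O}\,H_k(\tfrac{x_n}{2})=-\tfrac{k}{2}\,H_k(\tfrac{x_n}{2})$. Therefore, for $f=g_m(\vec p)\,H_k(\tfrac{x_n}{2})$,
$$L_{\Sigma^{n-1}\times\R}f=\bigl(L_{\Sigma^{n-1}}g_m\bigr)H_k(\tfrac{x_n}{2})+g_m\,\mathcal{O}H_k(\tfrac{x_n}{2})=-\Bigl(\lambda_m+\tfrac{k}{2}\Bigr)f.$$
Since the Gaussian weight factors as $e^{-|\vec x|^2/4}=e^{-|\vec p|^2/4}\,e^{-x_n^2/4}$ and $H_k$ is a polynomial, each such $f$ is smooth and lies in the weighted $L^2$ space of $\Sigma^{n-1}\times\R$, so it is a genuine eigenfunction with eigenvalue $\lambda_m+\tfrac12 k$.

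\textbf{Step 4 (these exhaust the spectrum).} It remains to rule out further eigenvalues. The operator $L$ is symmetric with respect to the weighted measure $e^{-|\vec x|^2/4}d\mu$. By hypothesis $\{g_m\}$ is an orthonormal basis of the weighted $L^2(\Sigma^{n-1})$, and, as recalled in Definition \ref{44}, the normalized functions $H_k(\tfrac{x_n}{2})$ form an orthonormal basis of $L^2(\R,e^{-x_n^2/4}\,dx_n)$; because the weight on the product factors, the products $g_m\,H_k(\tfrac{x_n}{2})$ form an orthonormal basis of the weighted $L^2(\Sigma^{n-1}\times\R)$. Expanding an arbitrary $L^2$ eigenfunction of $L$ in this basis and comparing coefficients forces its eigenvalue to equal $\lambda_m+\tfrac12 k$ for some $m,k$, so the eigenvalues of $L$ on $\Sigma^{n-1}\times\R$ are exactly $\{\lambda_m+\tfrac12 k:m,k=0,1,2,\dots\}$. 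The computations in Steps 1--3 are routine; the point demanding the most care is Step 4 — one must genuinely justify that tensoring the two orthonormal bases yields a basis of $L^2$ of the product (valid since the Gaussian weight splits and the Hermite system is complete), after which the eigenvalue identification is just coefficient comparison in that basis.
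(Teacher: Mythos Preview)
Your proof is correct and slightly more direct than the paper's. The paper does not split $L$ as $L_{\Sigma^{n-1}}+\mathcal O$ right away; instead it conjugates by the Gaussian $e^{|\vec x|^2/8}$ to write $L=e^{|\vec x|^2/8}\hat H\,e^{-|\vec x|^2/8}$ with $\hat H=\hat H_\Sigma+\hat H_n$, and then recognizes $\hat H_n-\tfrac14=\partial_{x_n}^2-\tfrac{x_n^2}{16}$ as literally the quantum harmonic oscillator Hamiltonian of Definition \ref{44}, reading off its spectrum from there. You instead keep the drift operator $\mathcal O=\partial_{x_n}^2-\tfrac12 x_n\partial_{x_n}$ and verify directly from the Hermite differential equation that $\mathcal O H_k(\tfrac{x_n}{2})=-\tfrac k2 H_k(\tfrac{x_n}{2})$. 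The two computations are unitarily equivalent via that Gaussian conjugation; the paper's route makes the harmonic-oscillator connection explicit (which is the advertised theme of the chapter), while yours avoids an extra layer of notation. For completeness (your Step 4), the paper spells out the double expansion---first expand $f(\vec p,x_n)$ in the $h_k$'s to get coefficients $b_k(\vec p)\in L^2(\Sigma)$, then expand each $b_k$ in the $g_m$'s---whereas you invoke the tensor-basis fact in one line; both are fine. One small quibble: Definition \ref{44} literally asserts that the $\psi_k=c_k\,e^{-x_n^2/8}H_k(\tfrac{x_n}{2})$ form an orthonormal basis of $L^2(\R,dx_n)$, not that the $H_k(\tfrac{x_n}{2})$ do so in $L^2(\R,e^{-x_n^2/4}dx_n)$; the two statements are of course equivalent, but your citation is a hair loose.
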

\begin{proof} Fix a point $\vec p\in\Sigma^{n-1}$, and choose an orthonormal frame $\{\vec{e_i}\}_{i=1}^{n-1}$ for $\Sigma$ such that $\nabla_{\vec{e_i}}^{\top}\vec{e_j}=0$.  Then letting $\vec{e_n}$ be a unit vector tangent to the $\R$ factor of $\Sigma\times\R$ and $x_n$ be the coordinate on this factor, we obtain an orthonormal frame $\{\vec{e_i}\}_{i=1}^n$ for $\Sigma\times\R$.  Note that the mean curvature of $\Sigma^{n-1}$ equals that of $\Sigma^{n-1}\times\R$.  Also, $\langle\vec{e_n},\vec n\rangle=0$, so Lemma \ref{30} shows that $\Sigma^{n-1}\times\R\subset\R^{n+1}$ is a self-shrinker.

For the rest of this proof, we will use the subscript $_\Sigma$ to denote operators and quantities restricted to the surface $\Sigma^{n-1}\subset\R^n$.  Operators and quantities without the subscript $_\Sigma$ will refer to the surface $\Sigma^{n-1}\times\R\subset\R^{n+1}$.

Since $\R$ is a straight line, $\vec{e_n}$ is a constant vector field.  Thus $\nabla_{\vec{e_i}}\vec{e_n}=0$ for all $i$, so $|A_\Sigma|^2=|A|^2$.  Likewise, the Laplacian on $\Sigma\times\R$ splits as
\begin{align} \Delta \notag &=\Delta_\Sigma + (\nabla_{\vec{e_n}})^2 \\
L_{\Sigma}f \notag &= \Delta_{\Sigma}f-\frac 1 2 \langle\vec x,\nabla_\Sigma f\rangle + \left(|A_\Sigma|^2+\frac 1 2\right)f \\
L_\Sigma f \notag &= e^{\frac{|\vec {x}_\Sigma|^2}8} (\hat H_\Sigma) e^{-\frac{|\vec {x}_\Sigma|^2}8} f \\
L_\Sigma f \notag &= e^{\frac{|\vec {x}|^2}8} (\hat H_\Sigma) e^{-\frac{|\vec {x}|^2}8} f
\end{align}

Here we are defining $\hat H_{\Sigma}$ to be $$\hat H_\Sigma=e^{-\frac{|\vec {x}_\Sigma|^2}8} L_\Sigma e^{\frac{|\vec {x}_\Sigma|^2}8}.$$  We now show that the eigenvalues of $L_\Sigma$ and $\hat H_\Sigma$ are identical, and their eigenvectors differ by a factor of $e^{-\frac{|\vec {x}_\Sigma|^2}8}$.  Suppose $g_m$ is an eigenfunction of $L_\Sigma$ with eigenvalue $\lambda_m$.  This means that $$ L_\Sigma g_m = -\lambda_m g_m $$ and $g_m$ is in $L^2(\Sigma)$ with the weighted measure $e^{-\frac{|\vec x_\Sigma|^2}4}d\mu$.  In other words $$\int\limits_{\Sigma}|g_m |^2 \left(e^{-\frac{|\vec {x}_\Sigma|^2}4}d\mu\right) <\infty.$$ We now show that $\lambda_m$ is also an eigenvalue of $\hat H_\Sigma$ with corresponding eigenfunction $e^{-\frac{|\vec x_\Sigma|^2}{8}} g_m.$

\begin{align} \hat H_\Sigma \notag &=e^{-\frac{|\vec {x}_\Sigma|^2}8} L_\Sigma e^{\frac{|\vec {x}_\Sigma|^2}8} \\
\hat H_\Sigma e^{-\frac{|\vec x_\Sigma|^2}{8}} g_m \notag &=e^{-\frac{|\vec {x}_\Sigma|^2}8} L_\Sigma e^{\frac{|\vec {x}_\Sigma|^2}8} e^{-\frac{|\vec x_\Sigma|^2}{8}} g_m \\
\hat H_\Sigma e^{-\frac{|\vec x_\Sigma|^2}{8}} g_m \notag &=e^{-\frac{|\vec {x}_\Sigma|^2}8} L_\Sigma g_m \\
\hat H_\Sigma e^{-\frac{|\vec x_\Sigma|^2}{8}} g_m \notag &=e^{-\frac{|\vec {x}_\Sigma|^2}8} (-\lambda_m g_m )\\
\hat H_\Sigma e^{-\frac{|\vec x_\Sigma|^2}{8}} g_m \notag &= -\lambda_m e^{-\frac{|\vec {x}_\Sigma|^2}8}  g_m
\end{align}

We also check the integrability condition.  In order to be an eigenfunction for $\hat H_\Sigma$, we must show that $e^{-\frac{|\vec x_\Sigma|^2}{8}} g_m\in L^2(\Sigma)$ with respect to the standard measure $d\mu$.  However, this is trivial since
$$\int\limits_{\Sigma}|e^{-\frac{|\vec x_\Sigma|^2}{8}}g_m |^2 d\mu = \int\limits_{\Sigma}|g_m |^2 e^{-\frac{|\vec {x}_\Sigma|^2}4}d\mu <\infty.$$  These arguments work in both directions, so there is a one to one correspondence between the eigenvalue-eigenfunction pairs of $\hat H_\Sigma$ and those of $L_\Sigma$.

We now turn our attention to the operator $L$ on $\Sigma\times\R$.

\begin{align} Lf \notag &= \Delta f - \frac 1 2 \langle \vec x,\nabla f\rangle+\left(|A|^2+\frac 1 2\right)f \\
Lf \notag &= \Delta_\Sigma f +(\nabla_{\vec{e_n}})^2 f - \frac 1 2 \langle \vec x,\nabla_\Sigma f\rangle-\frac 1 2 \langle \vec x,\vec{e_n}\nabla_{\vec{e_n}} f\rangle  +\left(|A_\Sigma|^2+\frac 1 2\right)f \\
Lf \notag &= L_\Sigma f + (\nabla_{\vec{e_n}})^2 f -\frac 1 2 \langle \vec x,\vec{e_n}\nabla_{\vec{e_n}} f\rangle \\
\nabla_{\vec{e_n}}\left( e^{-\frac{|\vec {x}|^2}8}f \right) \notag &= -\frac 1 4 \langle \vec{e_n},\vec x\rangle e^{-\frac{|\vec {x}|^2}8}f + e^{-\frac{|\vec {x}|^2}8}\nabla_{\vec{e_n}}f \\
(\nabla_{\vec{e_n}})^2\left( e^{-\frac{|\vec {x}|^2}8}f \right) \notag &= -\frac 1 4 \langle \vec{e_n},\vec {e_n}\rangle e^{-\frac{|\vec {x}|^2}8}f - \frac 1 4 \langle \vec{e_n},\vec x\rangle e^{-\frac{|\vec {x}|^2}8}\nabla_{\vec{e_n}} f +\frac 1{16}\langle\vec{e_n},\vec x\rangle^2 e^{-\frac{|\vec {x}|^2}8} f \\
\notag & \ \ \ -\frac 1 4 \langle \vec{e_n},\vec x\rangle e^{-\frac{|\vec {x}|^2}8}\nabla_{\vec{e_n}} f + e^{-\frac{|\vec {x}|^2}8}(\nabla_{\vec{e_n}})^2 f \\
(\nabla_{\vec{e_n}})^2\left( e^{-\frac{|\vec {x}|^2}8}f \right) \notag &= \left(-\frac 1 4 f -\frac 1 2 \langle \vec x,\vec{e_n}\nabla_{\vec{e_n}} f\rangle +\frac {x_n^2}{16} +(\nabla_{\vec{e_n}})^2 f  \right)e^{-\frac{|\vec {x}|^2}8} \\
Lf \notag &= e^{\frac{|\vec {x}|^2}8} \left( \hat H_\Sigma +(\nabla_{\vec{e_n}})^2 - \frac{x_n^2}{16} +\frac 1 4\right) e^{-\frac{|\vec {x}|^2}8} f
\end{align}

Define the operator in parentheses to be $$\hat H = \hat H_\Sigma +(\nabla_{\vec{e_n}})^2 - \frac{x_n^2}{16} +\frac 1 4.$$  By the same argument as above, the eigenvalues of $L$ and $\hat H$ are identical, and their eigenvectors differ by a factor of $e^{-\frac{|\vec {x}|^2}8}$.  We define $\hat H_n = (\nabla_{\vec{e_n}})^2 - \frac{x_n^2}{16} +\frac 1 4$, so now
$$\hat H = \hat H_\Sigma + \hat H_n$$ and $\hat H_n$ depends only on $\vec{x_n}$.

This is useful, because $\hat H_n -\frac 1 4$ is the Hamiltonian of the quantum harmonic oscillator, which we discussed in Definition \ref{44}.  We wish to prove that all the eigenvalues and eigenvectors of $\hat H$ can be built out of the eigenvalues and eigenvectors of its two composite operators.

To this end, suppose that $\{g_m(\vec x_\Sigma)\}_{m=0}^{\infty}$ is an orthonormal basis of $L^2(\Sigma)$ (with the weighted metric) made up of eigenfunctions of $L_\Sigma$ with $$L_\Sigma g_m=-\lambda_m g_m.$$  Then $\{e^{-\frac{|\vec {x}_\Sigma|^2}8}g_m(\vec x_\Sigma)\}_{m=0}^{\infty}$ is an orthonormal basis of $L^2(\Sigma)$ (with the standard metric) made up of eigenfunctions of $\hat H_\Sigma$.  Suppose also that $\{e^{-\frac{x_n^2}8}h_k(x_n)\}_{k=0}^{\infty}$ is an orthonormal basis of $L^2(\R)$ made up of eigenfunctions of $\hat H_n$ such that $$\hat H_n e^{-\frac{x_n^2}8}h_k=-\mu_k e^{-\frac{x_n^2}8}h_k.$$
Then we claim that all eigenfunctions of $\hat H$ can be constructed by taking products
$$\left\{\left( e^{-\frac{|\vec {x}_\Sigma|^2}8} g_m(\vec x_\Sigma)\right)\left(e^{-\frac{x_n^2}8}h_k(x_n)\right):m,k=0,1,2, ...\right\}$$ which can be simplified to the following.
$$\{e^{-\frac{|\vec {x}|^2}8} g_m(\vec x_\Sigma) h_k(x_n):m,k=0,1,2, ...\}$$

Furthermore, these eigenfunctions satisfy
$$\hat H  e^{-\frac{|\vec {x}|^2}8} g_m h_k =-(\lambda_m + \mu_k)e^{-\frac{|\vec {x}|^2}8}g_m h_k.$$

We check the last claim first.  Fix a specific $m$ and $k$ and compute.
\begin{align} \hat H  e^{-\frac{|\vec {x}|^2}8} g_m h_k  \notag &= (\hat H_\Sigma +\hat H_n) e^{-\frac{|\vec {x}|^2}8} g_m h_k \\
\hat H  e^{-\frac{|\vec {x}|^2}8} g_m h_k  \notag &= \left(\hat H_\Sigma e^{-\frac{|\vec {x}_\Sigma|^2}8}g_m\right)e^{-\frac{|x_n|^2}8}h_k +e^{-\frac{|\vec {x}_\Sigma|^2}8}g_m\left(\hat H_n e^{-\frac{|x_n|^2}8}h_k \right) \\
\hat H  e^{-\frac{|\vec {x}|^2}8} g_m h_k  \notag &= \left(-\lambda_m e^{-\frac{|\vec {x}_\Sigma|^2}8}g_m\right)e^{-\frac{|x_n|^2}8}h_k +e^{-\frac{|\vec {x}_\Sigma|^2}8}g_m\left(-\mu_k e^{-\frac{|x_n|^2}8}h_k \right) \\
\hat H  e^{-\frac{|\vec {x}|^2}8} g_m h_k  \notag &= -(\lambda_m + \mu_k)e^{-\frac{|\vec {x}|^2}8}g_m h_k
\end{align}

Likewise, the integrability condition is clearly satisfied.  This shows that products of the form $$\{g_m(\vec x_\Sigma) h_k(x_n):m,k=0,1,2, ...\}$$ are eigenfunctions of the operator $L$ on $\Sigma\times\R$.  Thus, it suffices to show that these eigenvectors form a basis of $L^2(\Sigma\times\R)$.  For this, it suffices to show that they are complete.

To this end, let $f(\vec x)\in L^2(\Sigma\times\R)$ be arbitrary.  Then define
\begin{align} b_k(\vec x_\Sigma) \notag &=\int\limits_{\R} f(\vec x)h_k(x_n)\left(e^{-\frac{|x_n|^2}4}dx_n\right) \\
|b_k(\vec x_\Sigma)|^2 \notag &\leq \int\limits_{\R} |f(\vec x)|^2 \left(e^{-\frac{|x_n|^2}4}dx_n\right) \int\limits_{\R} |h_k(x_n)|^2 \left(e^{-\frac{|x_n|^2}4}dx_n\right) \\
|b_k(\vec x_\Sigma)|^2 \notag &\leq \int\limits_{\R} |f(\vec x)|^2 \left(e^{-\frac{|x_n|^2}4}dx_n\right) \\
\int\limits_{\Sigma}|b_{k}(\vec x_\Sigma)|^2 \left(e^{-\frac{|\vec {x}_\Sigma|^2}4}d\vec x_\Sigma\right) \notag &\leq \int\limits_{\Sigma}\int\limits_{\R} |f(\vec x)|^2 \left(e^{-\frac{|x_n|^2}4}dx_n\right) \left(e^{-\frac{|\vec {x}_\Sigma|^2}4}d\vec x_\Sigma\right) \\
\int\limits_{\Sigma}|b_{k}(\vec x_\Sigma)|^2 \left(e^{-\frac{|\vec {x}_\Sigma|^2}4}d\vec x_\Sigma\right) \notag &\leq \| f(\vec x)\|_{L^2(\Sigma\times\R)}
\end{align}

Thus, $b_{k}(\vec x_\Sigma)\in L^2(\Sigma)$, so we can write $b_{k}(\vec x_\Sigma)=\sum\limits_{m=0}^\infty b_{mk}g_{m}(\vec x_\Sigma)$.  Thus, $f(\vec x)=\sum\limits_{m,k=0}^\infty b_{mk}g_{m}h_{k}$.  Thus we have an orthonormal basis of $L^2(\Sigma\times\R)$.

We now turn our attention to the operator $\hat H_n -\frac 1 4$.  From Definition \ref{44}, this is the negative of the Hamiltonian of the quantum harmonic oscillator with $m=\frac 1 2$ and $\omega=\frac 1 2$.  The eigenvalues of this operator are $\{\frac 1 4 +\frac 1 2 k:k=0,1,2,3, ...\}$ with corresponding eigenfunctions $h_k=e^{-\frac{x_n^2}{8}}H_k(\frac {x_n} 2)$, where $H_k$ is the $k$th Hermite polynomial.  Thus the eigenfunctions of $\hat H_n$ are the same, but the eigenvalues of $\hat H_n$ are all lowered by $\frac 1 4$ to become $$\left\{\frac 1 2 k:k=0,1,2,3, ...\right\}.$$

By assumption, the eigenvalues of $\hat H_\Sigma$ are $\{\lambda_m\}$ with eigenfunctions $\left\{e^{-\frac{|\vec x_\Sigma|^2}8}g_m\right\}$.  Thus the eigenvalues of $L$ are $$\left\{\lambda_m+\frac 1 2 k:m,k=0,1,2,3,...\right\},$$ and the corresponding eigenfunctions are $\left\{g_m(\vec x_\Sigma) H_k(\frac {x_n}2)\right\}.$

\end{proof}

\section{Proof of Low Index Classification}

In this section, we will apply Lemma \ref{41} in order to prove the classification result, Theorem \ref{37}.

\begin{Pro}\label{39} Let $\Sigma^n\subset\R^{n+1}$ be a hyperplane through the origin.  Then the eigenvalues of $L$ on $\Sigma$ are
$$\left\{-\frac 1 2 + \frac 1 2 \sum\limits_{i=1}^n k_i:k_i=0,1,2,3,... \right\}.$$
For each choice of the $k_i$'s there exists a unique eigenfunction given by $$\prod\limits_{i=1}^n H_{k_i}\left(\frac {x_i}2\right).$$  Thus in particular, the index of $L$ on $\Sigma$ is $1$.
\end{Pro}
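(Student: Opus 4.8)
The strategy is induction on $n$ via Lemma \ref{41}, followed by counting negative eigenvalues. A hyperplane through the origin is totally geodesic, so $|A|^2\equiv 0$ and $Lf=\Delta f-\tfrac12\langle\vec x,\nabla f\rangle+\tfrac12 f$. For the base of the induction I would handle the line $\R^1\subset\R^2$ through the origin directly: conjugating $L$ by the Gaussian $e^{|\vec x|^2/8}$ exactly as in the proof of Lemma \ref{41} produces $\tfrac{d^2}{dx^2}-\tfrac{x^2}{16}+\tfrac34$, which up to the additive constant $\tfrac34$ is the negative of the quantum harmonic oscillator Hamiltonian of Definition \ref{44} with $m=\omega=\tfrac12$; reading off its eigendata yields eigenvalues $\{-\tfrac12+\tfrac12 k:k=0,1,2,\dots\}$ with eigenfunctions $H_k(\tfrac{x}{2})$. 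Since Lemma \ref{41} is stated under the running hypothesis on dimensions, this low-dimensional base case is checked by hand rather than quoted; equivalently one could start from the degenerate $0$-dimensional self-shrinker $\{0\}\subset\R$, on which $L$ is multiplication by $\tfrac12$, and apply Lemma \ref{41} $n$ times.

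For the inductive step I assume the claim for $\R^{n-1}\subset\R^n$: the eigenvalues of $L$ there are $-\tfrac12+\tfrac12\sum_{i=1}^{n-1}k_i$ with eigenfunctions $\prod_{i=1}^{n-1}H_{k_i}(\tfrac{x_i}{2})$, and, by iterating the completeness argument from the proof of Lemma \ref{41}, these functions (suitably normalized) form an orthonormal basis of the weighted $L^2$ space. Writing $\R^n=\R^{n-1}\times\R$ and applying Lemma \ref{41}, the eigenvalues of $L$ on $\R^n\subset\R^{n+1}$ become $\big(-\tfrac12+\tfrac12\sum_{i=1}^{n-1}k_i\big)+\tfrac12 k_n=-\tfrac12+\tfrac12\sum_{i=1}^{n}k_i$, with eigenfunctions $\big(\prod_{i=1}^{n-1}H_{k_i}(\tfrac{x_i}{2})\big)H_{k_n}(\tfrac{x_n}{2})=\prod_{i=1}^{n}H_{k_i}(\tfrac{x_i}{2})$, and these again form an orthonormal basis; in particular each tuple $(k_1,\dots,k_n)$ labels a one-dimensional piece of an eigenspace, which gives the asserted uniqueness.

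It remains to extract the index. Since $-\tfrac12+\tfrac12\sum_i k_i<0$ forces $\sum_i k_i=0$, i.e.\ every $k_i=0$, the operator $L$ has exactly one negative eigenvalue, namely $-\tfrac12$, with eigenfunction the constant $1=\prod_i H_0(\tfrac{x_i}{2})$ (and the value $0$ is attained precisely by the coordinate functions $x_j$, foreshadowing the Jacobi functions of later chapters). Because $L$ on $\R^n$ is, via the Gaussian conjugation above, unitarily equivalent to a Schr\"odinger operator with compact resolvent, its spectrum is purely discrete, and the stability index of Definition \ref{16} therefore equals the number of negative $L^2$-eigenvalues, which is $1$. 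I expect the only point requiring care to be this last identification of the index — a supremum over compact subdomains — with the number of negative $L^2$-eigenvalues: the bound $\mathrm{index}\ge 1$ follows by testing the quadratic form $\int(|\nabla f|^2-\tfrac12 f^2)e^{-|\vec x|^2/4}$ against a slowly varying cutoff of the constant function, and $\mathrm{index}\le 1$ follows from a min-max argument using that this form is nonnegative on the weighted-$L^2$ orthogonal complement of the constant.
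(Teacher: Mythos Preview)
Your proposal is correct and follows essentially the same route as the paper: handle the base case $\R^1\subset\R^2$ by Gaussian conjugation to the harmonic oscillator (obtaining $\partial_x^2-\tfrac{x^2}{16}+\tfrac34$ and hence eigenvalues $-\tfrac12+\tfrac12 k$ with eigenfunctions $H_k(\tfrac{x}{2})$), then iterate Lemma \ref{41} to reach $\R^n$, and finally observe that $\sum_i k_i<1$ forces all $k_i=0$. Your closing paragraph on reconciling Definition \ref{16}'s supremum-over-compact-domains index with the count of negative weighted-$L^2$ eigenvalues is in fact more careful than the paper, which simply asserts the index is $1$ after exhibiting the single negative eigenvalue.
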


\begin{proof} We first restrict attention to $\Sigma^1=\R\subset\R^2$.  Without loss of generality assume $\Sigma$ is the $x$-axis.  Then
\begin{align}  Lf \notag &= \Delta f - \frac 1 2 \langle\vec x,\nabla f\rangle +(|A|^2+\frac 1 2)f \\
|A|^2 \notag &= 0 \\
Lf \notag &= \partial_x^2 f-\frac 1 2 x\partial_x f +\frac 1 2 f \\
\partial_x \left(e^{-\frac{x^2}8} f \right) \notag &= -\frac x 4 e^{-\frac{x^2}8} f +e^{-\frac{x^2}8} \partial_x f \\
\partial_x^2 \left(e^{-\frac{x^2}8} f \right) \notag &= \left(-\frac 1 4 f + \frac{x^2}{16}f -\frac{2x}4 \partial_x f +\partial_x^2 f \right) e^{-\frac{x^2}8} \\
e^{\frac{x^2}8} \partial_x^2 \left(e^{-\frac{x^2}8} f \right) \notag &= -\frac 1 4 f + \frac{x^2}{16}f -\frac{x}2 \partial_x f +\partial_x^2 f \\
Lf \notag &= e^{\frac{x^2}8}\left( \partial_x^2 -\frac{x^2}{16} +\frac 3 4 \right)e^{-\frac{x^2}8} f
\end{align}

We define the operator in parentheses above as $$\hat{H}=  \partial_x^2 -\frac{x^2}{16} +\frac 3 4.$$  We analyzed this situation in the proof of Lemma \ref{41}.  We showed there that on $\Sigma$, $\lambda$ is an eigenvalue of $\hat{H}$ with eigenfunction $u_\lambda$ if and only if $\lambda$ is also an eigenvalue of $L$ with corresponding eigenfunction $e^{\frac{x^2}{8}}u_\lambda.$

As in the same proof, $\hat H-\frac 3 4$ is the negative of the quantum harmonic oscillator Hamiltonian with eigenvalues $$\left\{\frac 1 4 +\frac 1 2 k: k=0,1,2,...\right\}.$$  Thus, the eigenvalues of $L$ on $\R\subset\R^2$ are $$\left\{-\frac 1 2 +\frac 1 2 k: k=0,1,2,...\right\}$$ with corresponding eigenfunctions given by $$\left\{H_k(\frac x 2)\right\}$$  where $H_k(z)$ denotes the $k$th Hermite polynomial from Definition \ref{44}.

We then extend this to a general dimensional hyperplane $\Sigma^n\subset \R^{n+1}$ by successive applications of Lemma \ref{41}.  A simple induction argument shows that the eigenvalues of $L$ on $\Sigma^n\subset\R^{n+1}$ are $$\left\{-\frac 1 2 + \frac 1 2 \sum\limits_{i=1}^n k_i:k_i=0,1,2,3,... \right\}$$ with corresponding eigenfunctions $$\left\{\prod\limits_{i=1}^n H_{k_i}(\frac {x_i}2):k_i=0,1,2,...\right\}.$$

Thus, the index of $L$ on a hyperplane $\Sigma$ is $1$.  The only negative eigenvalue is $-\frac 1 2$, and the corresponding eigenfunction is the constant function $f=1$.
\end{proof}

\begin{rem} Note that the negative eigenvalue found in Proposition \ref{39} is exactly the one we knew existed from Theorem \ref{38}.  On a hyperplane, the mean curvature $H$ is identically $0$.  Also, if $\vec v$ is any constant vector field tangent to $\Sigma$, then $\langle \vec v,\vec n\rangle\equiv 0$.  Taking $\vec v = \vec n$ gives the eigenfunction $\langle \vec v,\vec n\rangle =1$ with eigenvalue $-\frac 1 2$.
\end{rem}

\begin{Pro}\label{40} Let $\Sigma^n=\mathbb S^k\times\R^{n-k}\subset \R^{n+1}$ be a self-shrinker with $1\leq k\leq n$.  Then the eigenvalues of $L$ on $\Sigma$ are
$$\left\{-1+\frac 1{2k}m(m+k-1)+\frac 1 2 \sum\limits_{i=k+1}^n c_i:m,c_i=0,1,2,3, ... \right\}.$$

For a fixed choice of the $c_i$'s and a fixed $m$, the number of independent eigenfunctions is given by the number of independent harmonic homogeneous polynomials of degree $m$ in $k+1$ variables.  In particular, the index of $L$ on $\Sigma$ is $n+2$.
\end{Pro}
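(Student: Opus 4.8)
The plan is to reduce the computation of the spectrum of $L$ on $\mathbb S^k \times \R^{n-k}$ to two pieces: the spectrum on the sphere factor $\mathbb S^k$ (with radius $\sqrt{2k}$), and the contribution from the $n-k$ Euclidean factors, which is handled by iterating Lemma \ref{41}. So the first step is to establish the base case $\Sigma = \mathbb S^k \subset \R^{k+1}$ itself. On such a sphere of radius $r = \sqrt{2k}$, one has $|A|^2 = k/r^2 = 1/2$, and the position vector is $\vec x = r\vec n$, so $\langle \vec x, \nabla f\rangle = 0$ for any $f$ defined on $\Sigma$ (the gradient is tangential, while $\vec x$ is normal). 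Hence on $\mathbb S^k$ the stability operator collapses to
\begin{equation*}
L_{\mathbb S^k} f = \Delta_{\mathbb S^k} f + \left(\tfrac12 + \tfrac12\right) f = \Delta_{\mathbb S^k} f + f.
\end{equation*}
The eigenvalues of the Laplacian on the round sphere of radius $r$ are $-\tfrac{1}{r^2} m(m+k-1)$ for $m = 0, 1, 2, \dots$, with eigenfunctions the restrictions of harmonic homogeneous polynomials of degree $m$ in $k+1$ variables, and the multiplicity is exactly the dimension of that space of polynomials. With $r^2 = 2k$, the eigenvalues of $L_{\mathbb S^k}$ become $1 - \tfrac{1}{2k}m(m+k-1)$; written in the sign convention of Definition \ref{10}, these are $\lambda_m = -1 + \tfrac{1}{2k}m(m+k-1)$, with the stated multiplicities.

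Next I would apply Lemma \ref{41} repeatedly, once for each Euclidean factor. Starting from the orthonormal basis of eigenfunctions of $L_{\mathbb S^k}$ just described, the lemma gives that $\mathbb S^k \times \R$ is again a self-shrinker and its $L$-eigenvalues are $\lambda_m + \tfrac12 c_{k+1}$ with $c_{k+1} = 0, 1, 2, \dots$, eigenfunctions obtained by multiplying by Hermite polynomials $H_{c_{k+1}}(x_{k+1}/2)$. Iterating this $n-k$ times (a straightforward induction, exactly as in the proof of Proposition \ref{39}) yields that the eigenvalues of $L$ on $\mathbb S^k \times \R^{n-k}$ are
\begin{equation*}
-1 + \frac{1}{2k} m(m+k-1) + \frac12 \sum_{i=k+1}^n c_i, \qquad m, c_i \in \{0,1,2,\dots\},
\end{equation*}
with the eigenfunctions being products of a degree-$m$ spherical harmonic with $\prod_{i=k+1}^n H_{c_i}(x_i/2)$, so the multiplicity for fixed $m$ and fixed $(c_i)$ is the dimension of the space of harmonic homogeneous degree-$m$ polynomials in $k+1$ variables, as claimed.

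Finally I would count negative eigenvalues to get the index $n+2$. A term is negative precisely when $\tfrac{1}{2k}m(m+k-1) + \tfrac12\sum_{i=k+1}^n c_i < 1$, i.e. $m(m+k-1) + k\sum_{i=k+1}^n c_i < 2k$. The case $m = 0$ forces the term $m(m+k-1) = 0$, leaving $\sum c_i < 2$: either all $c_i = 0$ (the eigenvalue $-1$, multiplicity $1$), or exactly one $c_i = 1$ (eigenvalue $-\tfrac12$, multiplicity $n-k$). The case $m = 1$ gives $m(m+k-1) = k$, so we need $k\sum c_i < k$, forcing all $c_i = 0$; the eigenvalue is $-1 + \tfrac12 = -\tfrac12$, with multiplicity equal to the dimension of the linear harmonic polynomials in $k+1$ variables, namely $k+1$. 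For $m \geq 2$ we have $m(m+k-1) \geq 2(k+1) > 2k$, so no further negative eigenvalues occur. Summing the multiplicities: $1 + (n-k) + (k+1) = n+2$. The main point requiring care is the bookkeeping in this last counting step — in particular correctly identifying the multiplicity $k+1$ of the degree-one spherical harmonics and checking that $m\geq 2$ is genuinely excluded for every $k \geq 1$ — together with being careful that the radius normalization $r = \sqrt{2k}$ is used consistently so that $|A|^2 = \tfrac12$ and the $\langle\vec x,\nabla f\rangle$ term drops out on the sphere factor.
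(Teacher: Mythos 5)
Your proposal is correct and follows essentially the same path as the paper's own argument: compute $L$ on the radius-$\sqrt{2k}$ sphere (where the drift term vanishes since $\vec x$ is normal and $|A|^2=\tfrac12$), identify the spectrum via the spherical Laplacian, then tensor with Hermite factors by iterating Lemma \ref{41}, and finally count negative eigenvalues. The only difference is that you make the exclusion of $m\geq 2$ explicit, whereas the paper leaves that step implicit; this is a sensible clarification but not a different method.
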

\begin{proof} Note that in order to be a self-shrinker the $\mathbb S^k$ factor of $\Sigma$ must have radius $\sqrt{2k}$.  We first restrict attention to the case $\Sigma=\mathbb S^k\subset \R^{k+1}$.  Note that on $\mathbb S^k$ all tangent vectors are perpendicular to $\vec x$.  Thus
\begin{align} |A|^2 \notag &= \sum\limits_{i=1}^k \frac 1 {2k} \\
|A|^2 \notag &= \frac 1 2 \\
Lf \notag &= \Delta f -\frac 1 2 \langle\vec x,\nabla f\rangle +f \\
Lf \notag &= \Delta f +f
\end{align}

Switching to spherical coordinates gives $$L=\frac 1{2k}\Delta_{\mathbb S^k} +1.$$  However, it is well known (see \cite{T}) that the eigenvalues of
$\Delta_{\mathbb S^k}$ are $$\{m(m+k-1):m=0,1,2,3,...\}$$
with corresponding eigenfunctions given by the spherical harmonics.  The multiplicity of each eigenvalue is given by the number of harmonic homogeneous polynomials of degree $m$ in $k+1$ variables.

Thus the eigenvalues of $L$ on $\Sigma^k=\mathbb S^k\subset \R^{k+1}$ are $$\left\{-1+\frac 1{2k}m(m+k-1):m=0,1,2,3, ... \right\}.$$  We now extend this to the case where $\Sigma^n=\mathbb S^k\times\R^{n-k}\subset \R^{n+1}$ via $n-k$ applications of Lemma \ref{41}.  This shows that the eigenvalues of $L$ on $\Sigma^n$ are
$$\left\{-1+\frac 1{2k}m(m+k-1)+\frac 1 2 \sum\limits_{i=k+1}^n c_i:m,c_i=0,1,2,3, ... \right\}$$ with corresponding eigenfunctions given by spherical harmonics multiplied by Hermite polynomials.

When $m=c_i=0$, we obtain the lowest eigenvalue $-1$, corresponding to the constant eigenfunction given by the mean curvature of $\Sigma$.  When all the $c_i=0$ and $m=1$, we obtain the eigenvalue $-\frac 1 2$ with multiplicity $k+1$.  The $k+1$ eigenfunctions are the restrictions to $\Sigma$ of the homogeneous linear polynomials in $k+1$ variables.  However, the eigenvalue $-\frac 1 2$ has additional eigenfunctions coming from the case when $m=0$ and exactly one $c_i=1$ while the others are $0$.  There are $n-k$ choices of a $c_i$, so the total multiplicity of $-\frac 1 2 $ is $(n-k)+(k+1)=n+1$.

It is interesting to note that the $n+1$ eigenfunctions for eigenvalue $-\frac 1 2$ are given by the restrictions of the $n+1$ Euclidean coordinate functions to $\Sigma$.
\end{proof}

We have now proven the first two claims of Theorem \ref{37}.  In order to complete the proof of this theorem, we will need the following facts from \cite{CM1}.

\begin{thm}\label{33} $\mathbb S^k\times\R^{n-k}$ are the only smooth complete embedded self-shrinkers without boundary, with polynomial volume growth, and $H\geq 0$ in $\R^{n+1}$.
\end{thm}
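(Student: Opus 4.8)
The plan is to follow Colding and Minicozzi and split into two cases according to whether the mean curvature vanishes identically. The case $H\equiv 0$ produces the hyperplane; the case $H>0$ is the substantive one, and there the aim is to reduce --- after a weighted estimate on the second fundamental form --- to Huisken's classification of mean-convex self-shrinkers with \emph{bounded} $|A|$, which gives the generalized cylinders $\mathbb{S}^k(\sqrt{2k})\times\R^{n-k}$ with $1\leq k\leq n$ (including the round sphere when $k=n$).

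First I would record the dichotomy. By Theorem \ref{38} we have $LH=H$; writing $L=\mathcal{L}+|A|^2+\tfrac12$ with the drift Laplacian $\mathcal{L}f=\Delta f-\tfrac12\langle\vec{x},\nabla f\rangle$ (self-adjoint with respect to the Gaussian-weighted measure $e^{-|\vec{x}|^2/4}\,d\mu$), this is the locally uniformly elliptic equation $\mathcal{L}H+(|A|^2-\tfrac12)H=0$ for the nonnegative function $H$. The strong maximum principle (equivalently, the Harnack inequality) then forces either $H\equiv 0$ on all of $\Sigma$ or $H>0$ everywhere. If $H\equiv 0$, the self-shrinker equation $H=\tfrac12\langle\vec{x},\vec{n}\rangle$ from Lemma \ref{30} gives $\langle\vec{x},\vec{n}\rangle\equiv 0$, so $\vec{x}$ is everywhere tangent to $\Sigma$; thus $\Sigma$ is invariant under the dilations $\vec{x}\mapsto\lambda\vec{x}$, and a smooth embedded cone with vertex at the origin must be a hyperplane through the origin. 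Hence $\Sigma=\R^n$, the $k=0$ case.

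The work is the case $H>0$. Here I would use that, for every constant vector $\vec{v}$, Theorem \ref{38} gives $LH=H$ and $L\langle\vec{v},\vec{n}\rangle=\tfrac12\langle\vec{v},\vec{n}\rangle$, so that since $H>0$ the quotients $\langle\vec{v},\vec{n}\rangle/H$ are eigenfunctions of the operator $\hat{\mathcal{L}}:=\mathcal{L}+2\langle\nabla\log H,\nabla\cdot\rangle$, which is self-adjoint with respect to the \emph{finite} measure $H^2e^{-|\vec{x}|^2/4}\,d\mu$ --- finite because $H^2\leq\tfrac14|\vec{x}|^2$ and $\Sigma$ has polynomial volume growth. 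Combining this with the Simons-type identity for the drift Laplacian of $|A|^2$ on self-shrinkers and Kato's inequality $|\nabla A|\geq|\nabla|A||$, one obtains a differential inequality for $w:=|A|^2/H^2$ with respect to $\hat{\mathcal{L}}$ in which the quartic terms cancel. A cutoff argument adapted to the exponential weight, together with the finiteness of the weighted volume, should then upgrade this inequality to the rigidity $|A|^2\equiv\tfrac12$ together with a parallelism of $A$, forcing $\Sigma$ to split isometrically as a round sphere of radius $\sqrt{2k}$ times a Euclidean factor. (Equivalently, once $|A|$ is shown to be controlled, one may quote Huisken's classification directly.)

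I expect the case $H>0$, and inside it the weighted maximum-principle/integral estimate, to be the main obstacle: this is precisely the step that removes the hypothesis that $|A|$ be bounded. Making it rigorous requires the exact form of the Simons identity on self-shrinkers, a careful choice of cutoff functions compatible with the Gaussian weight so that every integration by parts is legitimate, and control of the cross terms generated by $\nabla\log H$. The dichotomy and the minimal-cone case, by contrast, are routine.
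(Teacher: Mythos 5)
The paper does not actually prove this statement: Theorem~\ref{33} is stated as an imported fact from \cite{CM1} (see the sentence just before it, ``we will need the following facts from \cite{CM1}''), and no proof appears in the dissertation. So there is no in-paper argument to compare against; the appropriate benchmark is the proof in \cite{CM1} itself, which you explicitly set out to follow.

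Your outline does track that proof's structure correctly: the dichotomy $H\equiv 0$ or $H>0$ via the strong maximum principle applied to $LH=H$; the identification of the $H\equiv 0$ case with a smooth complete minimal cone, hence a hyperplane through the origin; and, in the $H>0$ case, a weighted Simons-type estimate leading to parallelism of $A$ and an isometric splitting. Two of your verifiable claims check out: the quartic cancellation in the quotient is real, since both $\mathcal{L}|A|^2$ and $\mathcal{L}H^2$ carry the same zeroth-order factor $(1-2|A|^2)$, which drops out when forming $|A|^2/H^2$; and the weighted measure $H^2e^{-|\vec x|^2/4}\,d\mu$ is finite because $H=\tfrac12\langle\vec x,\vec n\rangle$ gives $H^2\le\tfrac14|\vec x|^2$ and $\Sigma$ has polynomial volume growth. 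These are accurate.

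That said, as a proof this is an outline with the load-bearing steps deferred, and the deferred steps are not routine. Concretely: (i) before any integration by parts you need weighted $L^2$ integrability of $|A|$ (not just of $H$), which does \emph{not} follow from polynomial volume growth alone and is a separate lemma in \cite{CM1}; (ii) the operator $\hat{\mathcal{L}}=\mathcal{L}+2\langle\nabla\log H,\nabla\cdot\rangle$ that you wrote down does not quite close the computation for $w=|A|^2/H^2$ --- a residual $\langle\nabla w,\nabla\log H\rangle$ term survives, and obtaining a usable sign requires a CM1-specific pointwise inequality relating $|\nabla A|$, $|\nabla H|$ and $|A|$ on self-shrinkers, which is strictly stronger than Kato's inequality; and (iii) the cutoff must be matched to the Gaussian weight with quantitative control on the error terms at infinity, which is exactly the step that removes the bounded-$|A|$ hypothesis from Huisken's earlier classification and is the main technical content of the theorem. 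You correctly flag these as ``the main obstacle,'' but they are not surmounted here, so the proposal remains a faithful road map rather than a proof. One small side remark: the fact that $\langle\vec v,\vec n\rangle/H$ is an eigenfunction of the $H^2$-weighted drift operator is true and is used in \cite{CM1}, but in the classification of $F$-stable self-shrinkers, not in the mean-convex classification you are proving; it is not needed for the present argument.
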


\begin{thm}\label{42} Any smooth complete embedded self-shrinker in $\R^3$ without boundary and with polynomial area growth that splits off a line must either be a plane or a round cylinder.
\end{thm}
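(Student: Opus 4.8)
The plan is to pass to the plane curve through which $\Sigma$ factors, use the self-shrinker equation to show its mean curvature never changes sign, and then quote Theorem \ref{33}.

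\emph{Reduction to a plane curve.} Since $\Sigma^2$ splits off a line we may write, after rotating $\R^3$, $\Sigma=\gamma\times\R$ with the $\R$ factor a coordinate axis, where $\gamma\subset\R^2$ is smooth, complete and connected. Intersecting with a slice shows $\gamma$ is embedded, and polynomial area growth of $\Sigma$ forces polynomial length growth of $\gamma$. Exactly as in the proof of Lemma \ref{41}, the added direction $\vec{e_n}$ is a constant vector field tangent to $\Sigma$ and orthogonal to $\vec n$, so $H_\Sigma=H_\gamma$, the normals agree, and $\langle\vec x,\vec n\rangle$ computed on $\Sigma$ equals $\langle\vec x,\vec n\rangle$ computed on $\gamma$; hence, by Lemma \ref{30} (equation \eqr{2}), $\Sigma$ is a self-shrinker if and only if $\gamma$ is a self-shrinking curve in $\R^2$.

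\emph{The sign of $H$.} Parametrize $\gamma$ by arclength with unit tangent $T$. Differentiating $H=\tfrac12\langle\vec x,\vec n\rangle$ and using the Frenet relations $\tfrac{d}{ds}\vec n=HT$ and $\tfrac{d}{ds}T=-H\vec n$ (in the paper's convention $H=-a_{ii}$), one obtains $\tfrac{d}{ds}\langle\vec x,\vec n\rangle=H\langle\vec x,T\rangle$, hence $H'=\tfrac12\langle\vec x,T\rangle\,H$ along $\gamma$. This is a homogeneous linear first-order ODE for $H(s)$ (equivalently, since $\tfrac{d}{ds}|\vec x|^2=2\langle\vec x,T\rangle$, it integrates to $H=c\,e^{|\vec x|^2/4}$ along $\gamma$ for a constant $c$). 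Therefore either $H\equiv 0$ or $H$ vanishes nowhere, and in the latter case, after replacing $\vec n$ by $-\vec n$ if necessary, $H>0$ on $\gamma$, hence on $\Sigma$.

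\emph{Conclusion.} In either case $H\geq 0$ on $\Sigma$, so by Theorem \ref{33} $\Sigma$ is one of the generalized cylinders $\mathbb S^k\times\R^{2-k}$ (the $k=0$ case being the flat plane through the origin). Since $\Sigma$ splits off a line it is not the sphere $\mathbb S^2$, so $\Sigma$ is the plane ($k=0$) or the round cylinder $\mathbb S^1\times\R$ of radius $\sqrt2$ ($k=1$), which is exactly the claim. I expect the only genuinely non-routine step to be the observation that, for a planar self-shrinking curve, the self-shrinker equation upgrades the second-order identity $LH=H$ of Theorem \ref{38} to a first-order ODE for $H$ along $\gamma$, which is what forbids sign changes of $H$ and lets Theorem \ref{33} take over; the reduction to $\gamma$ and the final enumeration are straightforward, the only care being to check that ``splits off a line'' really yields a Euclidean product (so Lemma \ref{41}'s splitting applies) and that $\gamma$ inherits completeness, embeddedness and polynomial growth.
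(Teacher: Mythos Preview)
The paper does not give its own proof of Theorem \ref{42}; it is quoted from \cite{CM1} as one of several black-box facts used in the proof of Theorem \ref{37}. Your argument is correct and is essentially the one-dimensional specialization of the Colding--Minicozzi reasoning: the general identity $\nabla_{\vec e_i}H=-\tfrac12 a_{ij}\langle\vec x,\vec e_j\rangle$ (derived in the paper's proof of Theorem \ref{38}) collapses, for a planar curve, to the first-order ODE $H'=\tfrac12\langle\vec x,T\rangle H$, and this forces $H$ either to vanish identically or to be nowhere zero on the connected curve $\gamma$. Theorem \ref{33} then applies directly to $\Sigma$ and finishes the job. One small remark: the polynomial length growth of $\gamma$ that you mention is not actually needed, since you invoke Theorem \ref{33} for $\Sigma$ itself (where polynomial area growth is a hypothesis) rather than for $\gamma$.
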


\begin{thm}\label{43} If the mean curvature $H$ changes sign, then the first eigenvalue of $L$ is strictly less than $-1$.
\end{thm}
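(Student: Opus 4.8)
The plan is to realize $H$ itself as an eigenfunction of $L$ with eigenvalue $-1$ via Theorem \ref{38}, and then to invoke the standard fact that the bottom eigenfunction of a Schr\"odinger-type operator on a connected manifold does not change sign. A sign-changing $H$ therefore cannot sit at the bottom of the spectrum, which forces the first eigenvalue strictly below $-1$.

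First I would set up the variational framework. The operator $L$ is self-adjoint with respect to the weighted measure $e^{-|\vec x|^2/4}\,d\mu$, and on a self-shrinker with polynomial volume growth it has discrete spectrum bounded below, with lowest eigenvalue
$$\lambda_1=\inf_{f\not\equiv 0}\frac{\displaystyle\int_\Sigma\Bigl(|\nabla f|^2-\bigl(|A|^2+\frac12\bigr)f^2\Bigr)e^{-|\vec x|^2/4}\,d\mu}{\displaystyle\int_\Sigma f^2\,e^{-|\vec x|^2/4}\,d\mu},$$
the infimum being taken over the weighted Sobolev space and attained by the ground state. Next I would produce an eigenfunction with eigenvalue $-1$: since $H=\frac12\langle\vec x,\vec n\rangle$ we have $|H|\le\frac12|\vec x|$, and polynomial volume growth gives $\int_\Sigma|\vec x|^2e^{-|\vec x|^2/4}\,d\mu<\infty$, so $H$ lies in the weighted $L^2$ space; it is not identically zero because it changes sign. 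By Theorem \ref{38} we have $LH=H$, so $H$ is an eigenfunction with eigenvalue $-1$, whence $\lambda_1\le -1$.

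It then remains to rule out $\lambda_1=-1$. Suppose it held. Then $H$ achieves the infimum above, hence so does $|H|$ (because $|\nabla|H||=|\nabla H|$ almost everywhere), so $|H|$ is a nonnegative, not identically zero, weak — hence smooth — solution of $L|H|=-|H|$. The strong maximum principle (Harnack inequality), applicable since $\Sigma$ is connected and the zeroth-order coefficient $|A|^2+\frac12$ is locally bounded, then forces $|H|>0$ everywhere on $\Sigma$. This contradicts the fact that $H$, and therefore $|H|$, vanishes wherever $H$ changes sign. Hence $\lambda_1<-1$. (Equivalently, one may simply quote that the ground state of $L$ is simple with a nowhere-vanishing eigenfunction, so if $\lambda_1=-1$ then $H$ would be a constant multiple of it and could not change sign.)

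The only non-formal ingredient — and thus the main point to be careful about — is the spectral/variational infrastructure: that on the noncompact self-shrinker $\Sigma$ the operator $L$ genuinely has a bottom eigenvalue with the stated Rayleigh characterization, and that a minimizer of the Rayleigh quotient is a bona fide smooth eigenfunction to which the maximum principle applies. All of this is available from the Colding--Minicozzi analysis of the drift Laplacian on self-shrinkers with polynomial volume growth; granting it, the proof reduces to the two observations that $-1$ belongs to the spectrum and that a sign-changing function cannot be a ground state.
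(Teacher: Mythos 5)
The paper does not give its own proof of Theorem \ref{43}: it is one of three facts quoted from Colding--Minicozzi \cite{CM1} in order to finish the index computation in Theorem \ref{37}, so there is no in-text argument to compare against. I will therefore assess the proposal on its own terms and against my understanding of the source.

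Your overall strategy --- put $-1$ in the spectrum via $LH=H$ from Theorem \ref{38}, then rule out equality by ground-state positivity --- is the right one and is essentially the argument in \cite{CM1}. There is, however, one concrete gap beyond the caveats you flag. Your integrability check only places $H$ in the weighted $L^2$ space. For $\mathrm{RQ}(H)$ to be a finite, well-defined number, and hence for the step ``$\lambda_1=-1$ forces $H$ to attain the infimum'' to make sense, you need $H$ to lie in the \emph{form domain}, i.e. you also need $\nabla H$ and $|A|\,H$ in weighted $L^2$, since the numerator is $\int\bigl(|\nabla H|^2-(|A|^2+\tfrac12)H^2\bigr)e^{-|\vec x|^2/4}\,d\mu$. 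From $\nabla_{\vec e_i}H=-\tfrac12 a_{ij}\langle\vec x,\vec e_j\rangle$ (inside the proof of Theorem \ref{38}) one has $|\nabla H|\le\tfrac12|A|\,|\vec x|$, so what is actually required is $\int |A|^2|\vec x|^2 e^{-|\vec x|^2/4}\,d\mu<\infty$ and $\int |A|^2 H^2 e^{-|\vec x|^2/4}\,d\mu<\infty$. Polynomial volume growth alone does not give these, because $|A|$ need not be bounded on a general self-shrinker; \cite{CM1} prove the needed weighted curvature estimates separately before running the spectral argument. So your reduction to ``the spectral/variational infrastructure'' is slightly too optimistic: the missing piece is not just abstract spectral theory but a concrete $W^{1,2}$ bound on $H$.

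It is also worth knowing the route that sidesteps this issue and that fits the tools this paper already quotes. Work on a nodal domain $\Omega=\{H>0\}$: since $H$ changes sign, $\Omega$ is a nonempty proper open subset of $\Sigma$, $H|_\Omega>0$, $H=0$ on $\partial\Omega$, and $LH=H$, so (after a cutoff if $\Omega$ is unbounded) $\mu_1(\Omega)\le -1$; in fact a positive Dirichlet solution must be the ground state, so $\mu_1(\Omega)=-1$. Strict domain monotonicity of the lowest Dirichlet eigenvalue --- the theorem this paper itself quotes from \cite{CM1} at the end of Chapter \ref{24} --- then gives $\mu_1(\Sigma)<\mu_1(\Omega)=-1$. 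This avoids asking whether the bottom of the spectrum on all of $\Sigma$ is attained and replaces the global form-domain membership of $H$ by local estimates on a nodal domain, which is why it is the more robust argument in this noncompact setting.
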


We are now ready to prove Theorem \ref{37}.

\begin{proof}[proof of Theorem \ref{37}]  The cases when $\Sigma=\mathbb S^k\times\R^{n-k}\subset \R^{n+1}$ are covered in Propositions \ref{39} and \ref{40}.  We therefore assume for the rest of the proof that $\Sigma^n\neq \mathbb S^k\times\R^{n-k}$ for any $k$.

The proof procedes by induction on the dimension $n$ of $\Sigma^n$.  We begin with the base case $n=2$.  By Theorem \ref{33}, the mean curvature $H$ of $\Sigma$ changes sign.  Thus Theorem \ref{43} gives that the first eigenvalue $\mu_1$ of $L$ satisfies $\mu_1<-1$.  However, by Theorem \ref{38} we know $LH=H$, so since $H$ is not identically $0$ we know that $-1$ is also an eigenvalue of $L$.

We are assuming $\Sigma^2\neq \mathbb S^k\times\R^{2-k}$, so Theorem \ref{42} gives that $\Sigma$ does not split off a line.  This means that there is no nonzero constant vector field $\vec v$ such that $\langle \vec v,\vec n\rangle\equiv 0$.  Thus Theorem \ref{38} gives that $-\frac 1 2$ is an eigenvalue of $L$ with multiplicity at least $3$.  Thus, the index of $\Sigma$ is at least $n+3=5$.  It is not possible that $\Sigma^2\neq \mathbb S^k\times\R^{2-k}$ and also splits off a line, so the final claim is trivially true.

Now assume that the theorem holds for all surfaces $\tilde\Sigma^{n-1}\subset\R^n$, and consider an arbitrary self-shrinker $\Sigma^n\subset\R^{n+1}$ such that $\Sigma^n\neq \mathbb S^k\times\R^{n-k}$. We have two cases.  Either $\Sigma^n$ splits off a line, or it does not.

Suppose $\Sigma^n$ does not split off a line.  In this case there is no nonzero constant vector field $\vec v$ such that $\langle \vec v,\vec n\rangle\equiv 0$.  Thus Theorem \ref{38} gives that $-\frac 1 2$ is an eigenvalue of $L$ with multiplicity at least $n+1$.  However, we also know from Theorem \ref{33} that the mean curvature $H$ changes sign.  Thus, $H$ is not identically $0$, so $-1$ is an eigenvalue of $L$.  Also, Theorem \ref{43} gives the existence of at least one eigenvalue lower than $-1$.  Thus, the index of $\Sigma$ is at least $n+3$.

We now consider the other case, so suppose $\Sigma^n$ splits off a line.  In this case, there exists some $\tilde\Sigma^{n-1}\subset\R^n$ such that $\Sigma=\tilde\Sigma\times\R$.  Note that $\tilde\Sigma\neq\mathbb S^k\times\R^{n-1-k}$, since that would contradict our assumption that $\Sigma^n\neq \mathbb S^k\times\R^{n-k}$.  By our inductive hypothesis the index of $\tilde\Sigma^{n-1}$ is at least $n+2$.  We also know that since $H$ changes sign, one of the eigenvalues of $L$ on $\tilde\Sigma^{n-1}$ is $-1$ and another eigenvalue $\mu_1<-1$.  We now apply Lemma \ref{41} to see that $\Sigma$ has the same negative eigenvalues as $\tilde\Sigma$ as well as at least two new ones, namely $-1+\frac 1 2$ and $\mu_1+\frac 1 2$.  Thus, in this case the index of $\Sigma$ is at least $n+4$, which completes the proof.
\end{proof}

\newpage

\chapter{Stability of Pieces of $\mathbb S^k\times\R^{n-k}$}\label{24}

\section{Eigenfunctions with Eigenvalue $0$}

In the previous chapter we showed that all hypersurfaces of the form $\mathbb S^k\times\R^{n-k}\subset\R^{n+1}$ have finite index.  Then by Theorem \ref{45} each of these surfaces must be stable outside of some compact set.  In this chapter, we look for the largest stable subsets of these self-shrinkers.  For the rest of this chapter, we will let $\vec x=(x_{k+1},x_{k+2},...,x_n)$ denote Euclidean coordinates on the $\R^{n-k}$ factor of $\Sigma$.  We will also let $\vec\phi$ denote spherical coordinates on the $\s^k$ factor of $\Sigma.$  Then $(\vec\phi,\vec x)$ are coordinates on $\Sigma$.

We will proceed by finding positive Jacobi functions as in Definition \ref{8}.  Note that eigenfunctions with eigenvalue $0$ are Jacobi functions.  This allows us to exploit the results of Chapter \ref{26} to easily prove the following series of propositions giving stable subsets of $\s^k\times\R^{n-k}$.

\begin{Pro} Let $\Sigma^n\subset\R^{n+1}$ be a flat hyperplane through the origin.  Suppose $P\subset\Sigma$ is a flat $(n-1)$-plane through the origin.  Then $P$ splits $\Sigma^n$ into two stable half-hyperplanes.
\end{Pro}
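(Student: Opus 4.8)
The plan is to exhibit an explicit positive Jacobi function on each of the two pieces and then invoke Definition \ref{8}. After an orthogonal change of Euclidean coordinates (which sends one hyperplane through the origin to another and conjugates $L$ to $L$, since $|A|^2$, $\Delta$, and $\langle\vec x,\nabla\cdot\,\rangle$ are all equivariant under $O(n+1)$), I may assume $\Sigma=\R^n$ carries the standard coordinates $x_1,\dots,x_n$ and that $P=\{x_n=0\}$. Then $P$ separates $\Sigma$ into the two open half-hyperplanes $\Sigma_+=\{x_n>0\}$ and $\Sigma_-=\{x_n<0\}$.

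Next I would read off the needed Jacobi function from Proposition \ref{39}: the functions $\prod_{i=1}^n H_{k_i}(x_i/2)$ are eigenfunctions of $L$ on $\Sigma$ with eigenvalue $-\frac{1}{2}+\frac{1}{2}\sum_i k_i$, so choosing $k_n=1$ and $k_i=0$ for $i<n$ gives the eigenfunction $H_1(x_n/2)=x_n$ with eigenvalue $0$, i.e. $Lx_n=0$. (This is also immediate from the formula for $L$: on a hyperplane $|A|^2=0$ and $\Delta x_n=0$, while $\langle\vec x,\nabla x_n\rangle=x_n$, so $Lx_n=-\frac{1}{2}x_n+\frac{1}{2}x_n=0$; equivalently one may use $\langle\vec a,\vec x\rangle$ for any unit vector $\vec a$ tangent to $\Sigma$ and normal to $P$.) Since $L$ is linear, $-x_n$ is a Jacobi function as well. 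Thus $x_n$ restricts to a Jacobi function on $\Sigma_+$ that is strictly positive there, and $-x_n$ restricts to a positive Jacobi function on $\Sigma_-$, so by Definition \ref{8} each half-hyperplane is stable, which is the claim.

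The only point that requires a little care — and really the only obstacle — is the behavior along the dividing plane $P$, where the natural Jacobi function $x_n$ vanishes, so it is positive only on the open half. I would handle this by recalling that stability of a region is governed by variations compactly supported in its interior, equivalently by the sign of the bottom of the Dirichlet spectrum of $-L$ as in Definition \ref{16} (via the standard Barta/Fischer--Colbrie--Schoen comparison: a positive solution of $Lu=0$ in the open region forces the first Dirichlet eigenvalue of $-L$ to be $\geq 0$). Hence a Jacobi function positive on the open half-hyperplane is exactly what is required, and nothing positive up to and including $P$ is needed; this is also precisely what the Dirichlet problem on the closed half-hyperplane reduces to, so the statement holds under either reading of ``half-hyperplane.''
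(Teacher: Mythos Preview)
Your proof is correct and follows essentially the same approach as the paper: invoke Proposition \ref{39} to see that a coordinate function $x_i$ satisfies $Lx_i=0$, so it is a positive Jacobi function on $\{x_i>0\}$, and then change coordinates to handle a general $P$. The paper's version is terser (three sentences) and does not discuss the boundary issue you raise in your final paragraph, but the content is the same.
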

\begin{proof} By Proposition \ref{39} any coordinate function $x_i$ on $\Sigma$ satisfies $L x_i=0$.  Thus, $x_i$ is a Jacobi function on the portion of $\Sigma$ given by $\{x_i>0\}$.  The result follows by changing coordinates.
\end{proof}

In the previous proposition, $\Sigma=\s^k\times\R^{n-k}$ for the value $k=0$.  For comparison, and because we will use it later, we record the following fact.

\begin{Pro}\label{9} Let $\Sigma=\s^k\times\R^{n-k}\subset\R^{n+1}$ be a self-shrinker with $1\leq k\leq n-1$.  Let $H\subset\R^{n-k}$ be a half-space with boundary equal to a flat $(n-k-1)$-plane through the origin.  Then $\s^k\times H\subset \Sigma$ is unstable.
\end{Pro}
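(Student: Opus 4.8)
The plan is to produce, on $\Omega:=\s^k\times H$, a positive solution $u$ of $Lu=\tfrac12 u$ that vanishes on the boundary $\s^k\times\partial H$, and then to exploit the strictly negative ``eigenvalue'' $-\tfrac12$ to rule out the existence of a positive Jacobi function. After rotating $\R^{n-k}$ we may assume $H=\{x_{k+1}>0\}$, so that $\partial\Omega=\s^k\times\{x_{k+1}=0\}$.

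First I would record the one-line computation that identifies $u$. Let $u=x_{k+1}$, viewed as a function on $\Sigma=\s^k\times\R^{n-k}$. Since $x_{k+1}$ is a linear coordinate on the flat factor it is harmonic on $\Sigma$; its gradient $\nabla u$ is the constant unit field $\vec e_{k+1}$, which is tangent to $\Sigma$, so $\langle\vec x,\nabla u\rangle=x_{k+1}=u$ (the $\s^k$ part of the position vector is normal and contributes nothing); and on this self-shrinker $|A|^2=\tfrac12$, since each of the $k$ principal curvatures of the sphere of radius $\sqrt{2k}$ equals $1/\sqrt{2k}$. Hence
$$Lu=\Delta u-\tfrac12\langle\vec x,\nabla u\rangle+\bigl(|A|^2+\tfrac12\bigr)u=0-\tfrac12 u+u=\tfrac12 u,$$
i.e. $u$ is precisely an eigenfunction for the eigenvalue $-\tfrac12$ found in Proposition \ref{40}. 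On $\Omega$ this $u$ is positive in the interior, vanishes on $\partial\Omega$, and lies in $L^2\bigl(\Omega,e^{-|\vec x|^2/4}d\mu\bigr)$ because the Gaussian weight dominates the polynomial growth of $u^2$ against the polynomial volume growth of $\Sigma$.

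Next I would turn this into instability. The quadratic form associated with $L$ on $\Omega$ is $Q(f)=\int_\Omega\bigl(|\nabla f|^2-(|A|^2+\tfrac12)f^2\bigr)e^{-|\vec x|^2/4}d\mu$, and $-L$ is self-adjoint and bounded below for this weighted measure. Integrating by parts (the boundary integral over $\partial\Omega$ vanishes since $u|_{\partial\Omega}=0$, and the term at infinity vanishes by the Gaussian decay against polynomial volume growth) gives
$$Q(u)=-\int_\Omega u\,Lu\;e^{-|\vec x|^2/4}d\mu=-\tfrac12\int_\Omega u^2\,e^{-|\vec x|^2/4}d\mu<0,$$
so the bottom of the spectrum of $-L$ on $\Omega$ is strictly negative. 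To keep this self-contained I would then cut off: fix $\eta_R$ equal to $1$ on $B_R(0)\cap\Sigma$, supported in $B_{2R}(0)\cap\Sigma$, with $|\nabla\eta_R|\le C/R$, and put $f_R=\eta_R u\in C_c^\infty(\Omega)$. Expanding $Q(f_R)$, the terms involving $\nabla\eta_R$ are bounded by $\int_{\{R\le|\vec x|\le 2R\}}(1+R^{-2})u^2\,e^{-|\vec x|^2/4}d\mu$, which tends to $0$; hence $Q(f_R)\to Q(u)<0$, and for $R$ large enough some compact subdomain of $\Omega$ carries a negative Dirichlet eigenvalue of $L$. By Definition \ref{16} this means $\s^k\times H$ is unstable.

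It is worth noting the contrast with the flat case (and with the preceding proposition): on a hyperplane $|A|^2\equiv 0$, so the identical computation yields $Lx_{k+1}=0$, a genuine positive Jacobi function on a half-hyperplane, making it stable; here the constant $|A|^2=\tfrac12$ pushes the relevant eigenvalue down from $0$ to $-\tfrac12$, and that strict negativity is exactly what destroys stability. The only step that is not bookkeeping is the last one — promoting the non-compactly-supported weighted-$L^2$ eigenfunction $u$ to a compactly supported test function of negative energy — and the polynomial volume growth hypothesis, played against the super-exponential decay of the Gaussian weight, is precisely what makes the cutoff error terms disappear.
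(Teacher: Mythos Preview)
Your proof is correct and follows the same route as the paper: both exhibit the linear coordinate $x_{k+1}$ (the paper writes $x_n$) as an eigenfunction of $L$ with eigenvalue $-\tfrac12$ vanishing on $\partial\Omega$. The paper's argument is a single line citing Proposition~\ref{40}, whereas you compute $Lu$ directly and, more carefully, supply the cutoff step passing from the weighted-$L^2$ eigenfunction $u$ to a compactly supported test function with $Q<0$---a point the paper leaves implicit.
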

\begin{proof} It suffices to show the existence on $\s^k\times H$ of an eigenfunction with negative eigenvalue and $0$ boundary value.  However, from \ref{40} we know that $x_n$ is an eigenfunction of $L$ with eigenvalue $-\frac 1 2$.  By changing coordinates, we can make the half-space $H$ equal to the region $\{x_n>0\}.$
\end{proof}

\begin{Pro} Let $\Sigma=\s^k\times\R^{n-k}\subset\R^{n+1}$ be a self-shrinker with $1\leq k\leq n-1$.  Let $C\subset\s^k$ be an arbitrary hemisphere, and $H\subset\R^{n-k}$ be a half-space with boundary equal to a flat $(n-k-1)$-plane through the origin.  Then $C\times H\subset \Sigma$ is stable.
\end{Pro}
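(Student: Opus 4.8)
The plan is to exhibit an explicit positive Jacobi function on $C\times H$ and then invoke Definition~\ref{8}. First, using the rotational symmetry of the $\s^k$ factor (which must have radius $\sqrt{2k}$) and of the $\R^{n-k}$ factor, I would reduce to the case where $H=\{x_n>0\}$ and $C$ is a fixed hemisphere of $\s^k$. A direct computation (cf.\ the proofs of Proposition~\ref{40} and Lemma~\ref{41}) shows that on $\Sigma=\s^k\times\R^{n-k}$ one has $|A|^2=\frac12$, the tangential part of the position vector is $(0,x_{k+1},\dots,x_n)$, and hence
\[
Lf=\Delta_{\s^k}f+\sum_{i=k+1}^n\Bigl(\partial_{x_i}^2 f-\frac12\,x_i\,\partial_{x_i}f\Bigr)+f,
\]
where $\Delta_{\s^k}$ is the Laplacian of the round sphere of radius $\sqrt{2k}$ (equivalently, $\frac1{2k}$ times the unit $k$-sphere Laplacian).

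Next I would take $\varphi$ to be the first Dirichlet eigenfunction of $\Delta_{\s^k}$ on the hemisphere $C$: this is (the restriction to $C$ of) a degree-one spherical harmonic, i.e.\ a linear coordinate function on $\R^{k+1}$. It is positive on the interior of $C$, vanishes on $\partial C$, and satisfies $\Delta_{\s^k}\varphi=-\frac12\varphi$, since the smallest Dirichlet eigenvalue of the unit $k$-sphere on a hemisphere is $k=1\cdot(1+k-1)$, which gets divided by $2k$. Setting $u=\varphi\cdot x_n$ on $C\times H$, we get $u>0$ on the interior and $u=0$ on the boundary, and since $\partial_{x_i}^2 u=0$ for every $i$ and $x_n\,\partial_{x_n}u=u$,
\[
Lu=(\Delta_{\s^k}\varphi)\,x_n-\frac12\,\varphi\,x_n+\varphi\,x_n=-\frac12 u-\frac12 u+u=0 .
\]
Thus $u$ is a positive Jacobi function on $C\times H$, and by Definition~\ref{8} the region $C\times H$ is stable.

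The only genuinely substantive point — and where I expect to spend the most care — is the identification of the hemisphere's Dirichlet ground state together with the verification that its eigenvalue is exactly $-\frac12$. This can be done either by odd reflection of Dirichlet eigenfunctions across the equator (which identifies the hemisphere's Dirichlet spectrum with the antisymmetric part of the full-sphere spectrum, whose bottom is the degree-one piece with eigenvalue $k$), or directly from the spherical-harmonic decomposition referenced in \cite{T}. Once this is in hand the eigenvalue bookkeeping is forced: the $-\frac12$ from the hemisphere ground state and the $-\frac12$ produced by the linear function $x_n$ on $\{x_n>0\}$ cancel against the $+1$ coming from $|A|^2+\frac12$, so the bottom of the Dirichlet spectrum of $L$ on $C\times H$ sits precisely at $0$. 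This is consistent with Proposition~\ref{9}: enlarging $C$ to the whole sphere $\s^k$ removes the $\varphi$ factor and pushes this bottom eigenvalue strictly below $0$, making $\s^k\times H$ unstable.
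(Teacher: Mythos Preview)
Your proposal is correct and follows essentially the same approach as the paper: both arguments reduce by symmetry to $H=\{x_n>0\}$ and a standard hemisphere, take $u=\varphi\cdot x_n$ with $\varphi$ a degree-one spherical harmonic positive on $C$, verify $Lu=0$, and invoke Definition~\ref{8}. The paper reaches $Lu=0$ by citing the eigenvalue table of Proposition~\ref{40} (taking $m=c_n=1$ there), whereas you compute it directly and add a justification of why $\varphi$ is the hemisphere's Dirichlet ground state; this extra discussion is correct but not strictly needed for the stability conclusion, since Definition~\ref{8} only requires a positive Jacobi function.
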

\begin{proof} By Proposition \ref{40} we can obtain an eigenfunction with eigenvalue $0$ by taking $m=c_n=1$ and all other $c_i=0$.  The spherical harmonics corresponding to $m=1$ are obtained by considering $\s^k\subset\R^{k+1}$ and restricting the coordinate functions of $\R^{k+1}$ to the surface $\s^k$.  By choice of coordinates, we can thus obtain any hemisphere $C$ as the portion of $\s^k$ on which $g_c$ is positive, where $g_c$ is some spherical harmonic with $m=1.$  Likewise, we can choose coordinates on $\R^{n-k}$ such that $H$ is the half-space given by $x_n>0.$  Then the eigenfunction $x_n g_c$ satisfies $$L (x_n g_c)=0$$ and $x_n g_c>0$ on the set $C\times H\subset \Sigma$.  Thus $x_n g_c$ is a positive Jacobi function, so the result follows.
\end{proof}

\begin{Pro} Let $\Sigma=\s^k\times\R^{n-k}\subset\R^{n+1}$ be a self-shrinker with $1\leq k\leq n-2$.  Let $P_1,P_2\subset \R^{n-k}$ be flat, orthogonal $(n-k-1)$-planes through the origin.  Then $P_1\cup P_2$ splits $\R^{n-k}$ into four quarter-spaces $Q_i\subset \R^{n-k}$.  Each quarter-space $\s^k\times Q_i\subset\Sigma$ is stable.
\end{Pro}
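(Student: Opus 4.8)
The plan is to exhibit, on each of the four quarter-spaces, an explicit positive Jacobi function drawn from the eigenvalue-$0$ eigenfunctions catalogued in Proposition \ref{40}. Since $k\le n-2$ we have $n-k\ge 2$, so after an orthogonal change of coordinates on the $\R^{n-k}$ factor I may assume $P_1=\{x_{n-1}=0\}$ and $P_2=\{x_n=0\}$ (recall that $\vec x=(x_{k+1},\dots,x_n)$ are the Euclidean coordinates on the $\R^{n-k}$ factor, and $n-1\ge k+1$ here). The four quarter-spaces are then $Q_{\varepsilon_1,\varepsilon_2}=\{\varepsilon_1 x_{n-1}>0\}\cap\{\varepsilon_2 x_n>0\}$ with $\varepsilon_1,\varepsilon_2\in\{\pm1\}$, and it suffices to handle $Q_{+,+}$.

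Next I would specialize the spectral description of Proposition \ref{40} to $m=0$, $c_{n-1}=c_n=1$, and $c_i=0$ otherwise. The corresponding eigenvalue is $-1+\tfrac12+\tfrac12=0$, and, following the construction in Lemma \ref{41} (iterated as in the proof of Proposition \ref{40}), the associated eigenfunction is a degree-$0$ spherical harmonic times $H_1(x_{n-1}/2)\,H_1(x_n/2)$, i.e. a nonzero constant multiple of $u:=x_{n-1}x_n$. Thus $Lu=0$ on $\Sigma$, hence on the open subset $\s^k\times Q_{+,+}$ as well, since $L$ is a local operator. On $Q_{+,+}$ both $x_{n-1}$ and $x_n$ are strictly positive, so $u>0$ there; hence $u$ is a positive Jacobi function on $\s^k\times Q_{+,+}$, and this region is stable by Definition \ref{8}. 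For the remaining three quarter-spaces I would simply note that $\varepsilon_1\varepsilon_2\,x_{n-1}x_n$ is again annihilated by $L$ (by linearity) and is strictly positive on $Q_{\varepsilon_1,\varepsilon_2}$, so each $\s^k\times Q_i$ is stable; alternatively this follows from the previous case by a sign change of coordinates.

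There is no genuine obstacle here — the real work is already contained in the eigenvalue computation of Proposition \ref{40} — but two points are worth flagging. First, the relevant eigenfunction $x_{n-1}x_n$ has eigenvalue $0$ rather than a negative value, which is precisely why this argument yields stability rather than the instability that the linear coordinate functions (eigenvalue $-\tfrac12$) yield on half-spaces in Proposition \ref{9}. Second, although $x_{n-1}x_n$ changes sign across each plane $P_j$, it has a fixed sign on each open quarter-space, which is exactly what Definition \ref{8} requires; the fact that $u$ vanishes on the boundary $P_1\cup P_2$ is irrelevant, since stability only asks for a positive Jacobi function on the region itself.
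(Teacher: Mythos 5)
Your proposal is correct and takes essentially the same approach as the paper: invoke Proposition \ref{40} with $m=0$, $c_{n-1}=c_n=1$ to get the eigenvalue-$0$ eigenfunction $x_{n-1}x_n$, observe it is a positive Jacobi function on $\s^k\times Q_{+,+}$, and handle the other quarter-spaces by a change of coordinates. The extra remarks you add (locality of $L$, contrast with the eigenvalue $-\tfrac12$ case, vanishing on the boundary being harmless) are accurate but not needed beyond what the paper records.
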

\begin{proof} By Proposition \ref{40} we can obtain an eigenfunction with eigenvalue $0$ by taking $c_{n-1}=c_n=1$ and all other $m,c_i=0$.  In this case, the eigenfunction is $x_{n-1}x_n$, which is positive on $\{x_n>0\}\cap\{x_{n-1}>0\}$.  This set can be made equal to any $Q_i$ defined above by changing coordinates, so the result follows.
\end{proof}

\begin{Pro}\label{46} Let $\Sigma=\s^k\times\R^{n-k}\subset\R^{n+1}$ be a self-shrinker with $1\leq k\leq n-1$.  Then the following subsets of $\Sigma$ are stable.
\begin{enumerate}\item $\{x_n>\sqrt 2\}$
\item $\{|x_n|<\sqrt 2\}$
\item $\{x_n<-\sqrt 2\}$
\item $\{|\vec x|>\sqrt {2(n-k)}\}$
\item $\{|\vec x|<\sqrt {2(n-k)}\}$
\end{enumerate}
Note that by changing coordinates, the $x_n$-axis is arbitrary in $\R^{n-k}$.
\end{Pro}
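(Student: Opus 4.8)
The plan is to prove stability of each of the five regions by exhibiting on it a positive Jacobi function, after which stability is immediate from Definition \ref{8}. As in the previous propositions, every Jacobi function will be assembled from the eigenvalue-$0$ eigenfunctions listed in Proposition \ref{40}, using the fact that $L$ is a linear operator, so that any linear combination of eigenfunctions all having eigenvalue $0$ is again a Jacobi function on all of $\Sigma$.

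First I would record that, by Proposition \ref{40}, for each index $i$ with $k+1\le i\le n$ the choice $m=0$, $c_i=2$, and all other $c_j=0$ produces the eigenvalue $-1+\tfrac12\cdot 2=0$ with eigenfunction $H_2(x_i/2)=x_i^2-2$. Hence $x_n^2-2$ is a Jacobi function on $\Sigma$, and it is strictly positive on $\{x_n>\sqrt2\}$ and on $\{x_n<-\sqrt2\}$, which settles items (1) and (3). Since $L$ is linear, $2-x_n^2$ is also a Jacobi function, and it is strictly positive on $\{|x_n|<\sqrt2\}$, which settles item (2).

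For items (4) and (5) I would take the linear combination $\sum_{i=k+1}^n(x_i^2-2)=|\vec x|^2-2(n-k)$ of the eigenfunctions above. Every summand is an eigenfunction of $L$ with eigenvalue $0$, so this sum is again a Jacobi function on $\Sigma$. It is strictly positive exactly on $\{|\vec x|>\sqrt{2(n-k)}\}$, giving item (4), and its negative $2(n-k)-|\vec x|^2$ is strictly positive exactly on $\{|\vec x|<\sqrt{2(n-k)}\}$, giving item (5). The closing remark that the $x_n$-axis may be taken arbitrary in $\R^{n-k}$ follows because the eigenfunction list of Proposition \ref{40} is invariant under orthogonal changes of the Euclidean coordinates on the $\R^{n-k}$ factor, so every statement above transports under such a change of coordinates.

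I do not expect a genuine obstacle here; the only points needing (minimal) care are to confirm that the combination $|\vec x|^2-2(n-k)$ lies in a single eigenspace — immediate, since each term has eigenvalue $0$ — and to keep the sign of the degree-two Hermite polynomial straight, namely $H_2(x/2)=x^2-2$, so that the sets on which these Jacobi functions are positive are precisely the ones claimed.
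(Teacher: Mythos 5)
Your proposal is correct and follows essentially the same approach as the paper: both identify $x_n^2-2$ (the Hermite eigenfunction with eigenvalue $0$ from Proposition \ref{40}) for items (1)--(3), and both form the linear combination $|\vec x|^2-2(n-k)$ of the eigenvalue-$0$ eigenfunctions for items (4)--(5), then read off the regions of positivity.
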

\begin{proof} By Proposition \ref{40} we can obtain an eigenfunction with eigenvalue $0$ by taking $c_n=2$ and all other $m,c_i=0$.  In this case, the eigenfunction is $$x_n^2-2.$$  The first and third claims follow from finding the regions of $\Sigma$ on which this Jacobi function is positive.  Clearly $2-x_n^2$ is a positive Jacobi function on the region $\{|x_n|<\sqrt 2\}$, thus giving the second claim.

However, note that by choosing another $c_i=2$ while letting $c_n=0$, we obtain the eigenfunction $x_i^2-2.$  Then by linearity of the operator $L$ we have
\begin{align} L (x_i^2 -2) \notag &= 0 \\
L \left(\sum\limits_{i=n-k}^n x_i^2 -2\right) \notag &= 0 \\
L (|\vec x|^2 -2(n-k)) \notag &= 0
\end{align}
This Jacobi function is positive on the region $\{|\vec x|>\sqrt {2(n-k)}\}$, and its opposite is positive on the final region.  This completes the proof.
\end{proof}

\section{Rotationally Symmetric Stable Regions}

In Proposition \ref{46} we found some stable, radially symmetric portions of $\s^k\times\R^{n-k}$.  Regions of this form are given by $\{a <|\vec x|<b\}$.  In the following lemma we show that in order to find stable regions of this form, we need only consider radially symmetric Jacobi functions of the form $f=f(|\vec x|).$  This vastly simplifies the search for stable, radially symmetric regions by reducing a PDE to an ODE.

\begin{lem}\label{13} Suppose $f=f(\vec\phi,\vec x)$ is a positive Jacobi function on some portion $\{a<|\vec x|<b\}$ of $\mathbb S^k\times\R^{n-k}$.  Then the $\vec\phi$-rotationally symmetric function $g(\vec x)=\int\limits_{S^k} f(\vec\phi,\vec x)d\vec\phi$ is also a positive Jacobi function on the same region.

Suppose $g=g(\vec x)$ is a positive Jacobi function on some portion $\{a<|\vec x|<b\}$ of $\mathbb S^k\times\R^{n-k}$.  Then the function $h(r)=\int\limits_{|\vec x|=r} g(\vec x)$ is also a positive Jacobi function on the same region.
\end{lem}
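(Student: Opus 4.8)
The plan is to exploit the fact that the stability operator $L$ on $\mathbb S^k\times\R^{n-k}$ commutes with the group of isometries that act trivially on the $\R^{n-k}$ factor. Concretely, both the rotations of the $\mathbb S^k$ factor and the rotations of $\R^{n-k}$ fixing the origin are isometries of $\Sigma$ that preserve $|\vec x|$, hence preserve the region $\{a<|\vec x|<b\}$, and they preserve $|A|^2$ (which is constant) and the drift term $\langle\vec x,\nabla\cdot\rangle$ (since $\vec x$ is an equivariant vector field). Therefore $L$ commutes with the averaging operators $f\mapsto\int_{\mathbb S^k}f(\vec\phi,\vec x)\,d\vec\phi$ and $g\mapsto\int_{|\vec x|=r}g$. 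I would make this precise by writing $L$ in the split coordinates as in Lemma \ref{41}: $L=\tfrac1{2k}\Delta_{\mathbb S^k}+\Delta_{\R^{n-k}}-\tfrac12\langle\vec x,\nabla_{\R^{n-k}}\cdot\rangle+1$, so the $\mathbb S^k$-average kills $\tfrac1{2k}\Delta_{\mathbb S^k}f$ (integral of a Laplacian over a closed manifold is zero) while passing through every other term, and similarly for the angular average on the sphere $\{|\vec x|=r\}\subset\R^{n-k}$.

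For the first claim, I would set $g(\vec x)=\int_{\mathbb S^k}f(\vec\phi,\vec x)\,d\vec\phi$ and compute
\begin{align}
Lg(\vec x) \notag &= \int_{\mathbb S^k}\left(\Delta_{\R^{n-k}}f-\tfrac12\langle\vec x,\nabla_{\R^{n-k}}f\rangle+f\right)d\vec\phi + \int_{\mathbb S^k}\tfrac1{2k}\Delta_{\mathbb S^k}f\,d\vec\phi \\
Lg(\vec x) \notag &= \int_{\mathbb S^k}(Lf)\,d\vec\phi \\
Lg(\vec x) \notag &= 0,
\end{align}
using that $\int_{\mathbb S^k}\Delta_{\mathbb S^k}f\,d\vec\phi=0$ and $Lf=0$; positivity of $g$ is immediate since $f>0$. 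The second claim is the same argument on the Euclidean sphere $\{|\vec x|=r\}$: for $g=g(\vec x)$ I would write $\Delta_{\R^{n-k}}$ in polar coordinates $(r,\theta)$ as $\partial_r^2+\tfrac{n-k-1}{r}\partial_r+\tfrac1{r^2}\Delta_{\mathbb S^{n-k-1}}$, note the drift term is purely radial since $\langle\vec x,\nabla_{\R^{n-k}}g\rangle=r\partial_r g$, integrate over $\{|\vec x|=r\}$, and observe that the angular-Laplacian term integrates to zero, leaving $h(r)=\int_{|\vec x|=r}g$ satisfying $Lh=\int_{|\vec x|=r}Lg=0$ and $h>0$.

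The main obstacle is not any deep point but rather bookkeeping: I must be careful that "Jacobi function on $\{a<|\vec x|<b\}$" carries no boundary condition, so the averaging genuinely stays inside the open region and no boundary terms from integration by parts appear — the integrals are over the compact fibers $\mathbb S^k$ and $\{|\vec x|=r\}$, which are closed manifolds with no boundary, so Stokes' theorem gives the vanishing cleanly. I would also remark that strictly speaking $h$ is a function of $r$ but is naturally identified with the $\vec\phi$- and $\theta$-rotationally symmetric function $h(|\vec x|)$ on $\Sigma$, which is what is used in the subsequent reduction to an ODE; and that one could combine the two steps into a single average over the orbit of the full symmetry group, but doing them in two stages matches the way the lemma is stated and used.
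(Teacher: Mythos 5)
Your proposal is correct and takes essentially the same route as the paper, whose proof is a terse two-line observation that positivity is trivial and that $L$ commutes with integration over the compact fibers. Your write-up adds a genuinely useful clarification: the paper's phrase ``$L$ commutes with integration'' hides a Stokes'/divergence-theorem step, since $L$ contains the $\mathbb S^k$ (resp.\ $\mathbb S^{n-k-1}$) Laplacian, and that term does not literally pass through the integral but rather vanishes because the fibers are closed manifolds without boundary; you identify and handle this explicitly, while the remaining terms commute by differentiation under the integral sign.
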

\begin{proof} For both claims, positivity of the integral on the desired region follows trivially from positivity of the integrand.  It thus suffices to show in both cases that the differential operator $L$ yields $0$ when applied to the given integral.  However, this follows from the fact that in both cases the domain of integration is a compact set, so the operator $L$ commutes with integration.  Since in both cases the integrand is a Jacobi function, the result follows.
\end{proof}

Now we know we only need to consider symmetric Jacobi functions, so in the following proposition we apply the stability operator $L$ to radial functions.  We then find the general form a radial function must have in order to be a Jacobi function.

\begin{Pro}\label{47} Let $\Sigma=\s^k\times\R^{n-k}\subset\R^{n+1}$ be a self-shrinker with $0 \leq k\leq n-1$, and let $\vec x$ denote Euclidean coordinates on the $\R^{n-k}$ factor of $\Sigma$.  Suppose $f=f(r)$, where $r=|\vec x|$.  Let $L$ denote the stability operator.  Then there exist constants $K_1,K_2$ and $K_3$ such that the following hold.  In the following, $c_1$ and $c_2$ are arbitrary.

\begin{enumerate}
\item\label{64} If $k=n-1$, then $\Sigma=\s^{n-1}\times\R$, and $\vec x=x_n$.  Let $g=g(x_n)$.  Then $$Lg=g''-\frac {x_n}2 g' + g,$$ and the general solution to the differential equation $Lg=0$ is given by $$g(x_n)=c_1(x_n^2-2)+c_2 g_2(x_n)$$ where $g_2(x_n)$ is defined piecewise by

$$g_2(x_n) = \left\{
  \begin{array}{ll}
    K_1(x_n^2-2) + (x_n^2-2)\int\limits_2^{x_n} \frac{e^{\frac{z^2}4}}{(z^2-2)^2}dz & : x_n \in (\sqrt 2,\infty)\\
    -\frac 1 2\sqrt{\frac e 2} & : x = \sqrt 2 \\
    (x_n^2-2)\int\limits_0^{x_n} \frac{e^{\frac{z^2}4}}{(z^2-2)^2}dz & : x \in (-\sqrt 2,\sqrt 2)\\
    \frac 1 2\sqrt{\frac e 2} & : x =-\sqrt 2 \\
    -K_1(x_n^2-2) + (x_n^2-2)\int\limits_{-2}^{x_n} \frac{e^{\frac{z^2}4}}{(z^2-2)^2}dz & : x \in (-\infty,-\sqrt 2)\\
  \end{array}\right.$$

\item\label{65} If $1\leq k\leq n-2$, then $$Lf=f''+\left(\frac{n-k-1}r-\frac r 2\right)f'+f.$$  The dimensions only appear in the differential equation $Lf=0$ as a difference $n-k$, so the solutions likewise only depend on this difference.  We therefore set $\lambda=n-k$.  The general solution to the differential equation $Lf=0$ on $\{r\geq 0\}$ is given by $$f(r)=c_1(r^2-2\lambda)+c_2 f_2(r)$$ where $f_2(r)$ is defined on $(0,\infty)$ by

    $$f_2(r) = \left\{
      \begin{array}{ll}
         (r^2-2\lambda)\int\limits_1^r \frac{e^\frac {s^2}4}{s^{\lambda-1}(s^2-2\lambda)^2} ds & : r \in (0,\sqrt{2\lambda})\\
         -\frac 1 2 \left(\frac e {2\lambda}\right)^{\frac \lambda 2} & : r = \sqrt{2\lambda} \\
         (r^2-2\lambda)\left(K_2 + \int\limits_{2\sqrt{2\lambda}}^r \frac{e^\frac {s^2}4}{s^{\lambda-1}(s^2-2\lambda)^2} ds\right) & : r \in (\sqrt{2\lambda},\infty)\\
      \end{array}\right.$$
    Clearly this is only a global solution when $c_2=0$.

\item\label{66} If $k=0$, $\Sigma=\R^n$ is a flat plane, and $$Lf=f''+\left(\frac{n-1}r-\frac r 2\right) f'+\frac 1 2 f.$$  The general solution to the differential equation $Lf=0$ on $[0,\infty)$ is given by $$f(r)=c_1 f_1 +c_2 f_2$$ where $f_1$ and $f_2$ are defined below.  Note that $f_2$ is defined only on $\{r>0\}$, so we only obtain a global solution when $c_2=0.$

    $$f_1(r)=-1+\sum\limits_{m=1}^\infty \frac{m(2m-2)!}{2^{3m-1}(m!)^2\prod\limits_{j=0}^{m-1}(n+2j)}r^{2m}$$

    $$f_2(r) = \left\{
      \begin{array}{ll}
         f_1\int\limits_{\frac 1 2 r_1}^r \frac{e^{\frac{s^2}4}}{s^{n-1}f_1^2}ds & : r \in (0,r_1)\\
         -\frac{e^{\frac{r_1^2}4}}{r_1^{n-1} f_1 '(r_1)} & : r = r_1 \\
         K_3 f_1 + f_1\int\limits_{2r_1}^r \frac{e^{\frac{s^2}4}}{s^{n-1}f_1^2}ds & : r \in (r_1,\infty)\\
      \end{array}\right.$$

\end{enumerate}
\end{Pro}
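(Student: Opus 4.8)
The plan is to treat all three cases the same way: first rewrite the stability operator as a second-order ordinary differential operator in $r$, and then solve the resulting equation by exhibiting one solution explicitly and obtaining a second, independent one by reduction of order. For the first step, take $f=f(r)$ with $r=|\vec x|$. Iterating the Laplacian splitting from Lemma \ref{41} gives $\Delta=\Delta_{\s^k}+\Delta_{\R^{n-k}}$, and since $f$ is constant along the $\s^k$ factor and radial on the Euclidean factor, $\Delta f=f''+\frac{n-k-1}{r}f'$. The tangential gradient $\nabla f=f'(r)\,\vec x/r$ lies in the $\R^{n-k}$ factor, while the $\R^{k+1}$-component of the position vector is orthogonal to that factor, so $\langle\vec x,\nabla f\rangle=rf'$. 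Finally $|A|^2=\frac{1}{2}$ when $k\ge1$ (as in the proof of Proposition \ref{40}) and $|A|^2=0$ when $k=0$. Substituting these into \ref{3} yields the three displayed formulas for $Lf$, the cases $k=n-1$ and $k=0$ being just the specializations $n-k=1$ and $|A|^2=0$.

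Next I would produce one solution of $Lf=0$. In cases \ref{64} and \ref{65}, direct substitution shows that $r^2-2\lambda$ — that is, $x_n^2-2$ when $\lambda=n-k=1$ — is a solution; it is also the zero-eigenvalue Jacobi function of Propositions \ref{40} and \ref{46}, obtained by summing the eigenfunctions $x_i^2-2$ over the Euclidean coordinates. In case \ref{66} there is no polynomial solution, so instead I would build $f_1$ by the Frobenius method at the regular singular point $r=0$: writing $f_1=\sum_{m\ge0}a_mr^m$ and substituting, the equation forces $a_1=0$ together with the recurrence $a_{m+2}=\frac{(m-1)a_m}{2(m+2)(m+n)}$, so the odd coefficients vanish and the even ones are determined by $a_0$; normalizing $a_0=-1$ and solving the recurrence in closed form produces the stated coefficients, while a ratio-test estimate shows the series converges for all $r$. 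Since $a_0<0$ and $a_m>0$ for every $m\ge1$, the function $f_1$ is strictly increasing on $(0,\infty)$ and has a unique positive zero $r_1$.

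The second solution in each case comes from reduction of order: for $f''+pf'+qf=0$ with a known solution $f_1$, the function $f_2=f_1\int e^{-\int p}f_1^{-2}$ is a second, independent solution. Here $e^{-\int p}$ works out to $e^{r^2/4}$, to $e^{r^2/4}r^{-(\lambda-1)}$, and to $e^{r^2/4}r^{-(n-1)}$ respectively, which reproduces exactly the integrands displayed in the three cases. That integrand is non-integrably singular at each zero of $f_1$ (at $\pm\sqrt{2}$; at $\sqrt{2\lambda}$; at $r_1$) and, when $n-k\ge2$, also at $r=0$; this is why $f_2$ must be defined piecewise, why its limits of integration are placed at non-singular points, and why $c_2=0$ is forced for a solution defined on all of $\Sigma$. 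Across a zero $r_0$ of $f_1$, differentiating $\log\big(e^{-\int p}(f_1/(r-r_0))^{-2}\big)$ and using the equation at $r_0$ (which gives $f_1''(r_0)=-p(r_0)f_1'(r_0)$) shows that the logarithmic term that would otherwise occur in $\int e^{-\int p}f_1^{-2}$ has vanishing coefficient, so $f_2$ extends across $r_0$ continuously; its value there is then read off from the expansion $f_1(r)\approx f_1'(r_0)(r-r_0)$ by L'H\^opital's rule, giving exactly the boxed constants $-\frac{1}{2}\sqrt{e/2}$, $-\frac{1}{2}\big(\frac{e}{2\lambda}\big)^{\lambda/2}$, and $-e^{r_1^2/4}/(r_1^{n-1}f_1'(r_1))$. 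Taking $K_1,K_2,K_3$ to be the constants needed to splice the pieces into the single displayed formula, and noting that $\{r^2-2\lambda,f_2\}$ (resp.\ $\{f_1,f_2\}$) is linearly independent on each interval, yields the asserted general solution.

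The main obstacle is case \ref{66}: obtaining the closed form for the recurrence coefficients, verifying convergence, and above all carrying out the singularity analysis of $f_2$ at $r_1$ and at $r=0$, where $f_1$ is available only through its power series. The delicate point common to all three cases is the vanishing of the logarithmic term at the zeros of $f_1$, which is precisely what makes the piecewise definition legitimate; it should drop out of the differential equation as indicated, but it has to be checked in each case.
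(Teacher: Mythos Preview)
Your proposal is correct and follows essentially the same approach as the paper: derive the radial ODE from the splitting of $\Delta$ and the values of $|A|^2$, exhibit the polynomial solution $r^2-2\lambda$ (or the Frobenius series $f_1$ when $k=0$), and obtain the second solution by reduction of order with the integrand $e^{-\int p}f_1^{-2}$, piecing across the zeros of $f_1$. The only noteworthy variation is at the splice point: the paper simply invokes the Existence and Uniqueness Theorem (the coefficients of $L$ are regular on $\R$, resp.\ on $(0,\infty)$, so a second global solution must exist, and $K_i$ is whatever makes the $C^1$ match), whereas you verify directly that the residue of $e^{-\int p}f_1^{-2}$ at a simple zero $r_0$ of $f_1$ vanishes via $f_1''(r_0)=-p(r_0)f_1'(r_0)$ --- a slightly more explicit route to the same conclusion.
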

\begin{proof} First recall from Equation \ref{3} we have that $$Lf=\Delta f -\frac 1 2 \langle (\vec\phi,\vec x), \nabla f\rangle+\left(|A|^2+\frac 1 2\right)f.$$  Here we are letting $\vec\phi$ denote coordinates on the $\s^k$ factor of $\Sigma$, so that $(\vec\phi,\vec x)$ are coordinates on $\Sigma$.  When $k=0$ we see $\Sigma=\R^n$ is a flat plane through the origin, so in particular $|A|^2=0$.  When $1\leq k\leq n-1,$ we have
\begin{align} |A|^2 \notag &= \sum\limits_{i=1}^k \frac 1 {2k} \\
|A|^2 \notag &= \frac 1 2
\end{align}

We note that the Laplacian on $\Sigma$ splits as a sum of Laplacians on its two factor spaces. $$\Delta_\Sigma=\Delta_{\s^k}+\Delta_{\R^{n-k}}$$  The functions we are considering here do not depend on the $\s^k$ factor, so that part of the Laplacian is $0$.  Recall that in spherical coordinates on $\R^{n-k}$ the Laplacian is given by $$\Delta_{\R^{n-k}} f=\frac{\partial^2 f}{\partial r^2}+\frac{n-k-1}{r} \frac{\partial f}{\partial r}+\frac 1 {r^2}\Delta_{\s^{n-k-1}} f.$$  Thus, when $f$ depends only on $r$ this reduces to $$\Delta_\Sigma f=\frac{\partial^2 f}{\partial r^2}+\frac{n-k-1}{r} \frac{\partial f}{\partial r}.$$  Likewise, from the geometric definition of the gradient we obtain $$\langle (\vec\phi,\vec x), \nabla f\rangle=rf'.$$  We will use these facts freely in the rest of the proof.

\vspace{1cm}
\noindent\emph{Proof of \ref{64}}

When $k=n-1$ and $g=g(x_n)$ we obtain $\Delta g=g''$ and $\langle (\vec\phi,x_n), \nabla g\rangle=x_n g'.$  As noted above, $|A|^2=\frac 1 2$.  We thus obtain that $$Lg=g''-\frac {x_n}2 g' + g.$$

We wish to find all such functions $g$ that satisfy $Lg=0$.  This is a second order linear ODE whose coefficients are continuous on all of $\R$, so the Existence and Uniqueness Theorem states that there must exist two linearly independent solutions defined on all of $\R$.  We saw in the proof of Proposition \ref{46} that $$g_1(x_n)=x_n^2-2$$ is one solution.  We use reduction of order to find another.
\begin{align}g_2(x_n) \notag &= (x_n^2-2)v(x_n) \\
0= L g_2(x_n) \notag &= \left(2v+4x_n v'+(x_n^2-2)v''\right) -\frac{x_n}2 \left(2x_n v+(x_n^2-2)v'\right) +(x_n^2-2)v \\
0 \notag &= 4x_n v'+(x_n^2-2)v''-\frac{x_n^3}2 v' +x_n v' \\
(x_n^2-2)v'' \notag &= \left(\frac 1 2 x_n^3-5x_n \right)v' \\
\frac{v''}{v'} \notag &= \frac {x_n} 2 \left(\frac{x_n^2-10}{x_n^2-2}\right) \\
\frac{dv'}{v'} \notag &= \left(\frac {x_n} 2 -2\left(\frac{2x_n}{x_n^2-2}\right)\right)dz \\
\log|v'| \notag &= \frac{x_n^2}4 -2\log |x_n^2-2| \\
v' \notag &= \frac{e^{\frac{x_n^2} 4}}{(x_n^2-2)^2}
\end{align}

We would like to find the second solution by setting $g_2=(x_n^2-2)v$, but there is a complication, since the formula above for $v'$ goes to $\infty$ at $x_n=\pm \sqrt 2$.  Thus, we first consider solutions on each of the three regions $(-\infty,-\sqrt 2)$, $(-\sqrt 2,\sqrt 2)$, and $(\sqrt 2,\infty)$.  Let $a$ be an element of any one of these intervals.  Then on that same interval, the general solution of the differential equation $Lg=0$ is given by
\begin{equation}\label{51}g(x_n) = A(x_n^2-2) + B(x_n^2-2)\int\limits_a^{x_n} \frac{e^{\frac{z^2}4}}{(z^2-2)^2}dz \end{equation}
where $A$ and $B$ are arbitrary constants.

We now use this to build on all of $\R$ a single solution which is linearly independent of $g_1=(x_n^2-2)$.  We begin by defining $g_2$ on the interval $(-\sqrt 2,\sqrt 2)$ by $$g_2(x_n)=(x_n^2-2)\int\limits_0^{x_n} \frac{e^{\frac{z^2}4}}{(z^2-2)^2}dz.$$
We note that this function $g_2$ is odd on its domain of definition.  By the Existence and Uniqueness Theorem, it extends uniquely to a solution defined on all of $\R$, which we will also call $g_2$.  We will see shortly that this extended function $g_2$ is odd on all of $\R$.  Currently $g_2$ is undefined at $x_n=\pm \sqrt 2$, but using L'Hospital's Rule we find the one-sided limits
\begin{align} \lim\limits_{x_n\rightarrow \sqrt 2^-}g_2 \notag &=-\frac 1 2\sqrt{\frac e 2}\\
\lim\limits_{x_n\rightarrow -\sqrt 2^+}g_2 \notag &=\frac 1 2\sqrt{\frac e 2}
\end{align}

We thus define $g_2$ to be equal to its limits at these two points, so it is now defined on the closed interval $[-\sqrt 2,\sqrt 2]$.  However, we know from Equation \ref{51} that on $(\sqrt 2,\infty)$ any solution of the differential equation is of the form $A(x_n^2-2) + B(x_n^2-2)\int\limits_2^{x_n} \frac{e^{\frac{z^2}4}}{(z^2-2)^2}dz.$  We have chosen the lower limit of integration to be $2$, but any other number in $(\sqrt 2,\infty)$ would work equally well and would only affect the value of $A$.  We now take the limit of this expression
$$\lim\limits_{x_n\rightarrow\sqrt 2^+}\left(A(x_n^2-2) + B(x_n^2-2)\int\limits_2^{x_n} \frac{e^{\frac{z^2}4}}{(z^2-2)^2}dz\right) =-B\frac 1 2\sqrt{\frac e 2}$$

This means that in order for this expression to match up with $g_2$ at $x_n=\sqrt 2$, we must have $B=1$.  The constant $A=K_1$ is fixed by requiring that the function $g_2$ be continuously differentiable at $x_n=\sqrt 2$.  We proceed similarly on the remaining interval $(-\infty,-\sqrt 2)$ to find the desired definition of $g_2$.  We obtain the constant $-K_1$ on the interval $(-\infty,-\sqrt 2)$ via symmetry considerations.

Clearly $g_2$ is an odd function, so it is independent of $g_1=x_n^2-2$.  Then any solution of the differential equation must be a linear combination of these two solutions.

\vspace{1cm}
\noindent\emph{Proof of \ref{65}}

Suppose $1\leq k\leq n-2$, and $f=f(r)$.  Then the Laplacian is given by $\Delta f=\frac{\partial^2 f}{\partial r^2}+\frac{n-k-1}{r} \frac{\partial f}{\partial r}.$  Also $|A|^2=\frac 1 2$, and $\langle (\vec\phi,\vec x), \nabla f\rangle=rf'.$  These facts combine to show that
$$Lf=f''+\left(\frac{n-k-1}r-\frac r 2\right)f'+f.$$

We wish to solve the differential equation $Lf=0$.  We know from Proposition \ref{46} that $f_1(r)=r^2-2(n-k)$ is one solution.  As we did in the proof of (1) above, we would like to use reduction of order to find a second, linearly independent solution to this differential equation.  However, in this case one of the coefficients of the differential equation is discontinuous at $r=0$.  Thus, we are only guaranteed existence of a second linearly independent solution on the intervals $(-\infty,0)$ and $(0,\infty)$.  Since we are thinking of $r$ as the distance to the origin, we restrict attention to finding this solution on the geometrically meaningful region $(0,\infty)$.

In this case reduction of order yields $$v'= \frac{e^\frac {r^2}4}{r^{n-k-1}[r^2-2(n-k)]^2}.$$  This function diverges as $r\rightarrow 0^+$, as expected.  However, the formula also diverges as $r\rightarrow \sqrt{2(n-k)}$.  Thus, it will only immediately yield a solution on the intervals $(0,\sqrt{2(n-k)})$ and $(\sqrt{2(n-k)},\infty)$.  However, using the same technique as in the proof of (1), we can piece these solutions together to obtain a solution on the full interval $(0,\infty)$.

We first note that $1\in(0,\sqrt{2(n-k)})$, so on that interval a second solution is given by
$$f_2=(r^2-2(n-k))\int\limits_1^r \frac{e^\frac {s^2}4}{s^{n-k-1}[s^2-2(n-k)]^2} ds.$$  Then we extend this solution to the endpoint $r=\sqrt{2(n-k)}$ by taking the limit.
$$\lim\limits_{r\rightarrow \sqrt{2(n-k)}^-}f_2=-\frac 1 2 \left(\frac e {2(n-k)}\right)^{\frac{n-k}2}$$

Now, on the interval $(\sqrt{2(n-k)},\infty)$, every solution of the differential equation can be written in the form
$$g(r)=A(r^2-2(n-k)) + B(r^2-2(n-k))\int\limits_{2\sqrt{2(n-k)}}^r \frac{e^\frac {s^2}4}{s^{n-k-1}[s^2-2(n-k)]^2} ds$$ for constants $A$ and $B$.  Thus, we can extend $f_2$ to be defined on all of $(0,\infty)$ by finding the correct values of $A$ and $B$ to make $f_2$ continuous and continuously differentiable.
$$\lim\limits_{r\rightarrow \sqrt{2(n-k)}^+} g(r)=-B\frac 1 2 \left(\frac e {2(n-k)}\right)^{\frac{n-k}2}$$

Thus, in order for $g(r)$ to be the continuous continuation of $f_2$, we must have $B=1$.  As in the proof of Proposition \ref{17}, we know from the Existence and Uniqueness Theorem that there exists some unique value $A=K_2$ such that the following piecewise defined function $f_2$ is a solution of the differential equation on all of $(0,\infty)$.

$$f_2(r) = \left\{
  \begin{array}{ll}
    (r^2-2(n-k))\int\limits_1^r \frac{e^\frac {s^2}4}{s^{n-k-1}[s^2-2(n-k)]^2} ds & : r \in (0,\sqrt{2(n-k)})\\
    -\frac 1 2 \left(\frac e {2(n-k)}\right)^{\frac{n-k}2} & : r = \sqrt{2(n-k)} \\
     (r^2-2(n-k))\left(K_2 + \int\limits_{2\sqrt{2(n-k)}}^r \frac{e^\frac {s^2}4}{s^{n-k-1}[s^2-2(n-k)]^2} ds\right) & : r \in (\sqrt{2(n-k)},\infty)\\
   \end{array}\right.$$

We note that the constant $K_2$ depends on the dimensional value $n-k$.  Since $f_1$ and $f_2$ are independent, any solution of the differential equation can be written as a linear combination of them.

\vspace{1cm}
\noindent\emph{Proof of \ref{66}}

Now consider the case when $k=0$.  In this case $\Sigma=\R^n$ is a flat plane through the origin, so in particular $|A|^2=0$.  We also know that $\Delta f=\frac{\partial^2 f}{\partial r^2}+\frac{n-1}{r} \frac{\partial f}{\partial r},$ and $\langle (\vec\phi,\vec x), \nabla f\rangle=rf'.$.  Putting these pieces together yields $$Lf=f''+\left(\frac{n-1}r-\frac r 2\right) f'+\frac 1 2 f.$$

We wish to solve the differential equation $Lf=0$ on the geometrically relevant region $\{r\geq 0\}$.  We note that by the Existence and Uniqueness Theorem we are only guaranteed the existence of two linearly independent solutions on the subregion $\{r>0\}$, since one of the coefficients in the differential equation diverges at $r=0$.

We begin by looking for a series solution $f=\sum\limits_{m=0}^\infty c_m r^m$.  We obtain the following recurrence relations.

\begin{align}m \notag &= 0 \ \ \ \ \ \ \ \ \ \ \ \ \ \ \ c_1=0 \\
m \notag &= 1 \ \ \ \ \ \ \ \ \ \ \ \ \ \ \ c_2=\frac{-c_0}{4n} \\
m \notag &\geq 2 \ \ \ \ \ \ \ \ \ \ \ \ c_{m+1}=\frac{m-2}{2(m+1)(n+m-1)}c_{m-1}
\end{align}
From the recurrence relations, we see that all of the odd terms $c_{2m+1}=0$, meaning that this approach only yields one linearly independent solution of the differential equation.

Solving these recurrence relations and letting $c_0=-1$, we obtain the series solution $$f_1=-1+\sum\limits_{m=1}^\infty \frac{m(2m-2)!}{2^{3m-1}(m!)^2\prod\limits_{j=0}^{m-1}(n+2j)}r^{2m}.$$  This series converges for all $r$ by the ratio test.  By making the substitution $r=|\vec x|$, it is possible to show that $f_1$ is a Jacobi function on all of $\Sigma$, including the origin.

Note that all of the non-constant terms $c_{2m}$ are positive.  Thus, this solution has a unique root, which we will call $r_1$.

As we did in the proofs of (1) and (2) above, we next find a second linearly independent solution using reduction of order.  We set $f_2(r)=v(r)f_1(r)$.  Substituting this into the equation $Lf=0$ yields

\begin{align}(2rf_1)v'' \notag &= (r^2f_1-2(n-1)f_1-4rf_1')v' \\
\frac{v''}{v'} \notag &= \frac r 2 -\frac {n-1} r -2(\log f_1)' \\
v' \notag &= \frac{e^{\frac{r^2}4}}{r^{n-1}f_1^2}
\end{align}

It is no surprise that this expression for $v'$ diverges as $r\rightarrow 0^+$, since we do not expect to find a second solution defined at $0$.  However, $v'$ also diverges as $r\rightarrow r_1$.  We work around this as we did before.  Note that we understand the solution of the differential equation on the two intervals $(0,r_1)$ and $(r_1,\infty)$.  If $a$ is any number in one of these intervals, then the general solution of the differential equation on that same interval is given by
$$f=A f_1 +B f_1\int\limits_a^r \frac{e^{\frac{s^2}4}}{s^{n-1}f_1^2}ds.$$

Clearly $\frac 1 2 r_1\in(0,r_1)$, so we define $f_2$ on that interval by $$f_2=f_1\int\limits_{\frac 1 2 r_1}^r \frac{e^{\frac{s^2}4}}{s^{n-1}f_1^2}ds.$$
We then extend the definition of $f_2$ to the point $r=r_1$ by setting it equal to its limit.  We move $f_1$ to the denominator and apply L'Hospital's Rule to show the following.  $$\lim\limits_{r\rightarrow r_1^-}f_2=-\frac{e^{\frac{r_1^2}4}}{r_1^{n-1} f_1 '(r_1)}$$  From the definition of $f_1$, we see that $f_1 '$ is a power series with all positive terms.  Thus $f_1 '(r_1)>0$, so the limit is finite.

We now extend $f_2$ onto the interval $(r_1,\infty)$ by first noting that $2r_1$ is in this interval.  Thus we know that every solution of the differential equation on this interval can be written in the form
$$\tilde f=A f_1 +B f_1\int\limits_{2r_1}^r \frac{e^{\frac{s^2}4}}{s^{n-1}f_1^2}ds.$$
We take the limit of this expression as $r\rightarrow r_1$ from the right and require that this equal the value $f_2(r_1)$.
$$\lim\limits_{r\rightarrow r_1^+}\tilde f=-B\frac{e^{\frac{r_1^2}4}}{r_1^{n-1} f_1 '(r_1)}$$

Thus $B=1$.  We know from the Existence and Uniqueness Theorem that for some choice $A=K_3$ the expression yields a continuation of $f_2$ such that the full function is a solution of the differential equation on all of $(0,\infty)$.  This solution $f_2$ is clearly not a multiple of $f_1$, so the general solution of the differential equation must be a linear combination of $f_1$ and $f_2$.  This completes the proof.
\end{proof}

We now investigate the stability of symmetric regions of $\s^k\times\R^{n-k}$.  This is accomplished by finding the regions on which the differential equation $Lf=0$ has a positive solution.  In Proposition \ref{47} we found the general radial solution of this differential equation on every unbounded cylindrical self-shrinker.  However, the behavior of these solutions is not immediately clear from their form.  Our task then is to study these solutions to see what conclusions we can draw about the regions on which positive solutions exist.  We begin by proving a proposition about unstable regions that we will need in the next chapter.

\begin{Pro}\label{17} Let $\Sigma=\s^{n-1}\times\R\subset\R^{n+1}$ be a self-shrinker.   For each $a\in [0,\sqrt 2)$, the half-infinite portion of the cylinder $\Sigma$ given by $\{x_n>a\}$ is unstable.  Further, for each such $a$ there exists some $b_a > a$ such that the portion of $\Sigma$ given by $\{a<x_n<b\}$ is unstable whenever $b>b_a$.
\end{Pro}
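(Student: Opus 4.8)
The plan is to reduce the problem to the radial ordinary differential equation studied in Proposition \ref{47} and then read off the sign information needed from its explicit solutions. Since $k=n-1$ here, we have $n-k=1$, $\vec x=x_n$, and $\{x_n>a\}=\s^{n-1}\times(a,\infty)$; by part \ref{64} of Proposition \ref{47} the radial stability operator is $Lg=g''-\frac{x_n}{2}g'+g$, with linearly independent solutions $g_1=x_n^2-2$ and the piecewise function $g_2$ described there.

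First I would show $\{x_n>a\}$ is unstable by ruling out a positive Jacobi function. Suppose $f=f(\vec\phi,x_n)>0$ were a Jacobi function on $\s^{n-1}\times(a,\infty)$. Averaging over the sphere as in the proof of Lemma \ref{13}, the function $g(x_n):=\int_{\s^{n-1}}f(\vec\phi,x_n)\,d\vec\phi$ is positive and satisfies $Lg=0$ on $(a,\infty)$: the $\Delta_{\s^{n-1}}$ part of $L$ integrates to $0$ over the closed manifold $\s^{n-1}$, and the remaining terms involve only $x_n$. Since $g_1$ and $g_2$ remain independent on $(a,\infty)$ (their Wronskian is a nonzero multiple of $e^{x_n^2/4}$), we may write $g=c_1 g_1+c_2 g_2$. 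Because $a<\sqrt 2$, the point $\sqrt 2$ and the nonempty interval $(a,\sqrt 2)$ both lie in the domain. From the explicit formula $g_2(\sqrt 2)=-\frac{1}{2}\sqrt{e/2}<0$, while on $(\sqrt 2,\infty)$ the formula exhibits $g_2$ as $(x_n^2-2)$ times a positive, unboundedly increasing integral plus a lower-order term, so $g_2(x_n)\to+\infty$. If $c_2<0$ then $g(x_n)\to-\infty$; if $c_2>0$ then $g(\sqrt 2)=c_2 g_2(\sqrt 2)<0$; either way $g$ fails to be positive throughout $(a,\infty)$. Hence $c_2=0$ and $g=c_1(x_n^2-2)$, which, since $\sqrt 2\in(a,\infty)$, is non-positive somewhere on $(a,\infty)$ for every $c_1$. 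This contradiction shows $\{x_n>a\}$ is unstable.

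For the truncated cylinders, let $g_a$ be the solution of $Lg_a=0$ with $g_a(a)=0$ and $g_a'(a)=1$, so $g_a>0$ immediately to the right of $a$. If $g_a$ had no zero on $(a,\infty)$ it would be a positive Jacobi function on $\s^{n-1}\times(a,\infty)$, contradicting the previous step; so $g_a$ has a first zero $b_a\in(a,\infty)$. Regarded as a ($\vec\phi$-independent) function on $\s^{n-1}\times(a,b_a)$, $g_a$ is positive in the interior, vanishes on the boundary, and satisfies $Lg_a=0$, hence realizes the first Dirichlet eigenvalue of $-L$ on that cylinder as $0$. By strict monotonicity of the first Dirichlet eigenvalue under domain inclusion, for every $b>b_a$ the operator $-L$ on $\s^{n-1}\times(a,b)$ has a negative first Dirichlet eigenvalue, i.e. an eigenfunction with negative eigenvalue and zero boundary data (cf. the proof of Proposition \ref{9}); thus $\{a<x_n<b\}$ is unstable for all $b>b_a$.

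The genuinely non-formal steps are: (i) checking that the spherical average $g$ is still governed by the ODE and by the basis $\{g_1,g_2\}$ of Proposition \ref{47}, in particular that the globally defined $g_2$, restricted to $(a,\infty)$, is still an independent second solution; (ii) extracting the two sign facts $g_2(\sqrt 2)<0$ and $g_2(x_n)\to+\infty$ from the piecewise formula; and (iii) invoking the standard equivalence between nonexistence of a positive Jacobi function and the existence of a negative Dirichlet eigenvalue, together with strict domain monotonicity of the first Dirichlet eigenvalue. None of these is deep, but (ii) — correctly tracking the sign and growth of $g_2$ — is where an error would be easiest, so I would verify the asymptotics carefully.
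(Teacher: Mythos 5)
Your proof of the first claim is correct and follows essentially the same route as the paper: reduce to a radially symmetric Jacobi function via spherical averaging (Lemma \ref{13}), write it as $c_1 g_1 + c_2 g_2$ using part~\ref{64} of Proposition~\ref{47}, and derive a contradiction from the signs of $g_2$ at $\sqrt 2$ and at infinity. Your case analysis ($c_2<0$, $c_2>0$, $c_2=0$) is actually a bit tidier than the paper's, which first argues $A>0$ from the location of the smallest positive root $r_0>\sqrt 2$ of $g_2$ and then $B<0$ from $g_2(\sqrt 2)<0$; the two arguments extract the same two sign facts and are equivalent.

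For the second claim you take a genuinely different, more constructive route. The paper dispatches it in three sentences: instability of $\{x_n>a\}$ means positive stability index, and by Definition~\ref{16} the index is a supremum over compact exhaustions, so some $\{a<x_n<b_a\}$ already has a negative Dirichlet eigenvalue and so does every larger $\{a<x_n<b\}$. You instead solve the initial-value problem $g_a(a)=0$, $g_a'(a)=1$, use the nonexistence of a positive Jacobi function on $(a,\infty)$ to force a first zero $b_a$, identify $g_a$ as a zero-eigenvalue first Dirichlet eigenfunction on $(a,b_a)$, and invoke strict domain monotonicity of the first eigenvalue. This buys an explicit characterization of $b_a$ (the first zero of $g_a$) rather than a bare existence statement, at the modest cost of having to justify that $g_a$ is the first Dirichlet eigenfunction (it does not change sign) and that domain monotonicity is strict; both facts are available in the paper, so the argument is sound.
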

\begin{proof} By the first part of Lemma \ref{13}, in order to show that the portion of $\Sigma$ given by $\{x_n>a\}$ is unstable, we need only show that there is no axially symmetric Jacobi function on $\{x_n>a\}$.  However, we know from Proposition \ref{47} that any such Jacobi function $g=g(x_n)$ must be of the form $g(x_n)=c_1(x_n^2-2)+c_2 g_2,$ where $g_2$ is as defined in Proposition \ref{47}.

Since $g_2$ is odd, it has $0$ as a root.  Thus $\{|x_n|<\sqrt 2\}$ is the largest stable portion of the cylinder $\Sigma$ centered at the origin.  By Proposition \ref{9}, the portion of $\Sigma$ given by $\{x_n>0\}$ is unstable.  Thus $g_2$ must also have at least one positive root.  Let $r_0$ denote the smallest positive root of $g_2$.  Then since $g_2$ is odd, it has roots $\pm r_0$.  We note that $g_2(x_n)<0$ for all $x_n\in(0,\sqrt 2)$.  We also have that $g_2(\sqrt 2)<0$.  This implies that $r_0>\sqrt 2.$

Thus $g_2$ by itself is not strictly positive on any region $\{x_n>a\}$ where $a<\sqrt 2$.  It now suffices to show that no linear combination of $g_2$ and $g_1=x_n^2-2$ is strictly positive on any such region.  To this end, suppose $$\tilde g=Ag_1+Bg_2$$ is positive on $\{x_n>a\}$ where $a<\sqrt 2$.  Then since $g_2(r_0)=0$ and $r_0>\sqrt 2$, we must have $A>0$.  By the same logic since $g_1(\sqrt 2)=0$, we must have $B<0$.  Then on the region $\{x_n>\sqrt 2\}$ we have that
\begin{align}\tilde g \notag &= (A+BK_1)(x_n^2-2)+B(x_n^2-2)\int\limits_2^{x_n} \frac{e^{\frac{z^2}4}}{(z^2-2)^2}dz \\
\tilde g \notag &= (x_n^2-2)\left(A+BK_1+B\int\limits_2^{x_n} \frac{e^{\frac{z^2}4}}{(z^2-2)^2}dz \right)
\end{align}
However, we already know that $B<0$.  We compute the following limits.
\begin{align}
\lim\limits_{x_n\rightarrow \infty} \int\limits_2^{x_n} \frac{e^{\frac{z^2}4}}{(z^2-2)^2}dz \notag &= \infty \\
\lim\limits_{x_n\rightarrow \infty} \left(A+BK_1+B\int\limits_2^{x_n} \frac{e^{\frac{z^2}4}}{(z^2-2)^2}dz \right) \notag &= -\infty \\
\lim\limits_{x_n\rightarrow \infty} \tilde g \notag &= -\infty
\end{align}
In particular, for large enough $x_n$ we must have $\tilde g<0$, so $\tilde g$ is not strictly positive on $\{x_n>a\}$.

To see the second claim, note that since $\{x_n>a\}$ is unstable it has positive stability index.  However, by Definition \ref{16}, this means that some compact subset $\{a<x_n<b_a\}$ also has positive stability index.  This $b_a$ satisfies the second claim, because the stability index is defined as a supremum.
\end{proof}

We note that in the proof of Proposition \ref{17} above we have also proven the following corollary.

\begin{cor}\label{52} Let $\Sigma=\s^{n-1}\times\R\subset\R^{n+1}$ be a self-shrinker.  Then $C=\sqrt 2$ is the unique value such that the two $(n-1)$-surfaces $\{x_n=\pm C\}$ split $\Sigma$ into three stable regions.
\end{cor}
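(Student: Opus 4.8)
The plan is to assemble the corollary from ingredients already established. For a fixed $C>0$ (the case $C=0$ gives only two regions, so is not at issue), the surfaces $\{x_n=\pm C\}$ cut $\Sigma=\s^{n-1}\times\R$ into $R_-=\{x_n<-C\}$, $R_0=\{-C<x_n<C\}$, and $R_+=\{x_n>C\}$. By the first part of Lemma~\ref{13}, each of these is stable precisely when it carries a positive $\vec\phi$-rotationally symmetric Jacobi function, and by Proposition~\ref{47}(\ref{64}) every such function is a linear combination $g(x_n)=c_1(x_n^2-2)+c_2\,g_2(x_n)$, where $g_2$ is the odd solution constructed there.

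First I would verify that $C=\sqrt2$ works. By Proposition~\ref{46} the regions $\{x_n<-\sqrt2\}$, $\{|x_n|<\sqrt2\}$, and $\{x_n>\sqrt2\}$ are each stable, with positive Jacobi functions $x_n^2-2$, $2-x_n^2$, and $x_n^2-2$ respectively (strictly positive on these open sets). So $C=\sqrt2$ has the stated property, and it remains only to prove uniqueness.

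For uniqueness I would split into two ranges. If $0<C<\sqrt2$, then Proposition~\ref{17} already states that $R_+=\{x_n>C\}$ is unstable, so such a $C$ fails. If $C>\sqrt2$, I claim instead that the middle region $R_0=\{|x_n|<C\}$ is unstable; by Lemma~\ref{13} it suffices to check that no $g=c_1(x_n^2-2)+c_2 g_2$ is strictly positive on $(-C,C)$. Here $x_n^2-2$ vanishes at $\pm\sqrt2$, while Proposition~\ref{47} records $g_2(\sqrt2)=-\frac12\sqrt{\frac e2}$ and, by oddness, $g_2(-\sqrt2)=\frac12\sqrt{\frac e2}$; hence any $g$ with $c_2\neq0$ is negative at one of the two points $\pm\sqrt2$, both of which lie in $(-C,C)$ since $C>\sqrt2$, and if $c_2=0$ then $g=c_1(x_n^2-2)$ is not strictly positive on $(-C,C)$, again because $C>\sqrt2$. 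In every case $R_0$ admits no positive symmetric Jacobi function, so it is unstable and this $C$ also fails. Therefore $C=\sqrt2$ is the unique value.

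I do not expect a genuine obstacle: the one substantive input — that the odd solution $g_2$ does not vanish at $\pm\sqrt2$ and takes opposite signs there — is exactly the boundary computation already carried out in Proposition~\ref{47}, and the rest is a direct appeal to Propositions~\ref{46} and~\ref{17} together with Lemma~\ref{13}. The only care needed is to confirm that, at $C=\sqrt2$, the three regions are the open sets on which the exhibited quadratic Jacobi functions are strictly positive.
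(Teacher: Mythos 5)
Your proof is correct and follows essentially the same route as the paper's own, which derives the corollary from the proof of Proposition~\ref{17}: the case $C<\sqrt2$ is Proposition~\ref{17} verbatim, and the case $C>\sqrt2$ is exactly the observation (stated cryptically in that proof as ``$\{|x_n|<\sqrt2\}$ is the largest stable portion of the cylinder centered at the origin'') that you make explicit via the values $g_2(\pm\sqrt2)=\mp\frac12\sqrt{e/2}$. Your write-up usefully spells out the dichotomy on $c_2$ that the paper leaves implicit, but the ingredients (Lemma~\ref{13} for the reduction to radial Jacobi functions, Proposition~\ref{47}(\ref{64}) for the general solution, Proposition~\ref{46} for existence at $C=\sqrt2$) are identical.
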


\begin{rem} Based on a computer approximation, $r_0\approx 3.00395$ is unique.  Whatever the exact value of $r_0$, we note that the portions of $\Sigma=\s^{n-1}\times\R$ given by $\{0<x_n<r_0\}$ and $\{-r_0<x_n<0\}$ are also stable.  By taking linear combinations of $g_1$ and $g_2$, it is possible to interpolate between the two intervals $(-\sqrt 2,\sqrt 2)$ and $(0,r_0)$ to find other intervals of comparable length over which $\Sigma$ is stable.  Also, the reflection through $0$ of each of these intervals can be obtained by interpolating between the intervals $(-\sqrt 2,\sqrt 2)$ and $(-r_0,0)$.
\end{rem}

\begin{Pro}\label{53} Let $\Sigma=\s^k\times\R^{n-k}\subset\R^{n+1}$ be a self-shrinker with $1\leq k\leq n-2$.  Then $\{r=\sqrt{2(n-k)}\}$ is the unique rotationally symmetric $(n-1)$-surface that splits $\Sigma$ into two stable regions $\{r<\sqrt{2(n-k)}\}$ and $\{r>\sqrt{2(n-k)}\}.$
\end{Pro}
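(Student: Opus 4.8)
The plan is to treat existence and uniqueness separately; throughout I write $\lambda=n-k$ and use that every rotationally symmetric $(n-1)$-surface in $\Sigma=\s^k\times\R^{n-k}$ is of the form $\{r=c\}$ for some $c>0$, where $r=|\vec x|$ on the $\R^{n-k}$ factor. Existence is already contained in Proposition \ref{46}: part (4) there exhibits the Jacobi function $|\vec x|^2-2\lambda=r^2-2\lambda$, positive on $\{r>\sqrt{2\lambda}\}$, and part (5) exhibits its negative $2\lambda-r^2$, positive on $\{r<\sqrt{2\lambda}\}$, so $\{r=\sqrt{2\lambda}\}$ does split $\Sigma$ into two stable regions. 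It remains to show that for every $c>0$ with $c\neq\sqrt{2\lambda}$ at least one of $\{r<c\}$, $\{r>c\}$ is unstable.

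The common reduction will be Lemma \ref{13}: if a region of the form $\{a<r<b\}$ carries a positive Jacobi function, then averaging over the orbits of the rotation group yields a positive \emph{radial} Jacobi function on the same region. By the case $1\le k\le n-2$ of Proposition \ref{47}, every radial solution of $Lf=0$ on $(0,\infty)$ is $Af_1+Bf_2$, with $f_1(r)=r^2-2\lambda$ and $f_2$ the explicit piecewise solution constructed there. Thus it suffices to analyze such combinations on $(0,c)$ and on $(c,\infty)$.

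First suppose $c>\sqrt{2\lambda}$ and that $\{r<c\}$ is stable. This region contains the core $\s^k\times\{0\}$, on which $\Sigma$ and the elliptic operator $L$ are smooth, so the averaged radial Jacobi function is bounded near $r=0$; since $f_2(r)\to+\infty$ as $r\to0^+$ (Proposition \ref{47}), it must be a nonzero multiple of $f_1=r^2-2\lambda$. But for $c>\sqrt{2\lambda}$ neither $r^2-2\lambda$ (negative at $r=0$) nor $2\lambda-r^2$ (negative for $r$ slightly larger than $\sqrt{2\lambda}$) is positive throughout $\{r<c\}$, a contradiction, so $\{r<c\}$ is unstable. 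Now suppose $0<c<\sqrt{2\lambda}$ and that $\{r>c\}$ is stable, with positive radial Jacobi function $\tilde f=Af_1+Bf_2$ on $(c,\infty)$. The point $r=\sqrt{2\lambda}$ lies in $(c,\infty)$, and there $f_1=0$ while $f_2=-\tfrac12\left(\tfrac{e}{2\lambda}\right)^{\lambda/2}<0$, so $\tilde f(\sqrt{2\lambda})>0$ forces $B<0$. On $(\sqrt{2\lambda},\infty)$ the integrand defining $f_2$ is positive and of size $e^{s^2/4}$, hence not integrable at infinity, so $f_2(r)\to+\infty$ faster than any power of $r$; since $f_1$ grows only quadratically, $\tilde f=Af_1+Bf_2\to-\infty$ as $r\to\infty$, contradicting positivity. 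Hence $\{r>c\}$ is unstable, and combining the two cases with the existence statement proves the proposition.

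Since Proposition \ref{47} already supplies the radial solutions together with the sign $f_2(\sqrt{2\lambda})<0$ and the blow-up of $f_2$ at both $r=0$ and $r=\infty$, the remaining work is mostly bookkeeping. I expect the case $0<c<\sqrt{2\lambda}$ to be the most delicate step: one must control the sign of a \emph{general} combination $Af_1+Bf_2$, not just of $f_2$ alone, which succeeds only because $f_2$ dominates $f_1$ at infinity — the same mechanism used in the proof of Proposition \ref{17}. A secondary point needing care is that Lemma \ref{13} is being applied to the punctured region $\{0<r<c\}$, so one must separately check that the resulting radial Jacobi function stays bounded at the core in order to conclude that its $f_2$-component vanishes.
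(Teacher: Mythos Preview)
Your proposal is correct and follows essentially the same route as the paper: reduce via Lemma \ref{13} to radial Jacobi functions, invoke the explicit solutions $f_1=r^2-2\lambda$ and $f_2$ from Proposition \ref{47}, rule out $c>\sqrt{2\lambda}$ because only multiples of $f_1$ are regular at the core, and rule out $c<\sqrt{2\lambda}$ by evaluating at $r=\sqrt{2\lambda}$ to force $B<0$ and then letting the exponential growth of $f_2$ drive $\tilde f\to-\infty$. Your treatment is in fact slightly cleaner than the paper's in two places: you do not bother to establish $A>0$ (the paper does, but never uses it, since $B<0$ alone gives $\tilde f\to-\infty$), and you flag explicitly the regularity-at-the-core step needed to exclude an $f_2$ component on $\{r<c\}$, which the paper handles only by the remark that $f_2$ ``is only a global solution when $c_2=0$.''
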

\begin{proof}  Recall from Proposition \ref{47} that the general solution to the differential equation $Lf=0$ on $\{r\geq 0\}$ is given by $$f(r)=c_1(r^2-2(n-k))+c_2 f_2(r)$$ where $f_2(r)$ is only defined on $(0,\infty)$.  Thus, the only solutions defined on sets containing the origin are constant multiples of $f_1=r^2-2(n-k)$.  Thus, the largest stable region of the form $\{r<C\}$ is given by $C=\sqrt{2(n-k)}$.

We also know that the region $\{r>\sqrt {2(n-k)}\}$ is stable.  It therefore suffices to show that no linear combination of $f_1$ and $f_2$ is strictly positive on $\{r>C\}$ where $C<\sqrt{2(n-k)}$.  To see this, suppose by way of contradiction that there exist constants $A$ and $B$ and some $C<\sqrt{2(n-k)}$ such that on the region $\{r>C\}$ $$\tilde f=A(r^2-2(n-k))+Bf_2>0.$$
Recall that $f_2$ is defined to be
$$f_2(r) = \left\{
  \begin{array}{ll}
    (r^2-2(n-k))\int\limits_1^r \frac{e^\frac {s^2}4}{s^{n-k-1}[s^2-2(n-k)]^2} ds & : r \in (0,\sqrt{2(n-k)})\\
    -\frac 1 2 \left(\frac e {2(n-k)}\right)^{\frac{n-k}2} & : r = \sqrt{2(n-k)} \\
     (r^2-2(n-k))\left(K_2 + \int\limits_{2\sqrt{2(n-k)}}^r \frac{e^\frac {s^2}4}{s^{n-k-1}[s^2-2(n-k)]^2} ds\right) & : r \in (\sqrt{2(n-k)},\infty)\\
   \end{array}\right.$$

In particular $f_2\left(\sqrt{2(n-k)}\right)<0$, so we must have $B<0$.  Likewise, it is clear that $f_2(r)$ has a unique root on the region $\left\{r>\sqrt{2(n-k)}\right\}$.  Thus, we must have $A>0$.  Thus we obtain that on the region $\{r>\sqrt{2(n-k)}\}$,
\begin{align}\tilde f\notag &= A(r^2-2(n-k))+B(r^2-2(n-k))\left(K_2 + \int\limits_{2\sqrt{2(n-k)}}^r \frac{e^\frac {s^2}4}{s^{n-k-1}[s^2-2(n-k)]^2} ds\right) \\
\tilde f \notag &= (r^2-2(n-k))\left(A+B K_2 + B\int\limits_{2\sqrt{2(n-k)}}^r \frac{e^\frac {s^2}4}{s^{n-k-1}[s^2-2(n-k)]^2} ds\right)
\end{align}
We compute the following limits.
\begin{align}
\lim\limits_{r\rightarrow \infty} \int\limits_{2\sqrt{2(n-k)}}^r \frac{e^\frac {s^2}4}{s^{n-k-1}[s^2-2(n-k)]^2} ds \notag &= \infty \\
\lim\limits_{r\rightarrow \infty} \left(A+B K_2 + B\int\limits_{2\sqrt{2(n-k)}}^r \frac{e^\frac {s^2}4}{s^{n-k-1}[s^2-2(n-k)]^2} ds\right) \notag &= -\infty \\
\lim\limits_{r\rightarrow \infty} \tilde f \notag &= -\infty
\end{align}
In particular, for large enough $r$ we must have $\tilde f<0$, so $\tilde f$ is not strictly positive on $\{r>C\}$.  This completes the proof.
\end{proof}

The uniqueness results in Corollary \ref{52} and Proposition \ref{53} are not surprising, and in fact they follow from the domain monotonicity of the lowest eigenvalue.  We now give a statement of this domain monotonicity property, and we show how it implies the uniqueness results that we already proved using more direct arguments.

\begin{thm}\cite{CM1} Suppose $\Omega_1$ and $\Omega_2$ are domains in some self-shrinker $\Sigma$.  Suppose also that $\Omega_1\subset\Omega_2$, and $\Omega_2$ is strictly larger than $\Omega_1$.  Then letting $\lambda_1(\Omega_i)$ denote the lowest eigenvalue of $L$ with Dirichlet boundary condition, we have $$\lambda_1(\Omega_2)<\lambda_1(\Omega_1).$$
\end{thm}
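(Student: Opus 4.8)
The plan is to reduce the statement to the classical domain monotonicity for the bottom of the spectrum of a Schr\"odinger-type operator, using the variational characterization of $\lambda_1$ adapted to the Gaussian-weighted $L^2$ space together with the strong maximum principle (equivalently, unique continuation). First I would record that $L$ is formally self-adjoint with respect to the weighted measure $d\nu=e^{-|\vec x|^2/4}\,d\mu$, because
$$Lf=e^{|\vec x|^2/4}\operatorname{div}\!\left(e^{-|\vec x|^2/4}\nabla f\right)+\left(|A|^2+\tfrac12\right)f,$$
so that for $f,g$ with compact support in a domain $\Omega$ an integration by parts gives
$$-\int_\Omega (Lf)\,g\,d\nu=\int_\Omega\Big(\langle\nabla f,\nabla g\rangle-\big(|A|^2+\tfrac12\big)fg\Big)\,d\nu.$$
Hence the Dirichlet problem $Lu=-\lambda u$ with $u|_{\partial\Omega}=0$ is the eigenvalue problem for the quadratic form $\mathcal Q_\Omega(f)=\int_\Omega\big(|\nabla f|^2-(|A|^2+\tfrac12)f^2\big)\,d\nu$ on $W_0^{1,2}(\Omega,d\nu)$, and
$$\lambda_1(\Omega)=\inf\left\{\frac{\mathcal Q_\Omega(f)}{\int_\Omega f^2\,d\nu}:\ f\in C_0^\infty(\Omega),\ f\not\equiv0\right\},$$
the infimum being attained by the first eigenfunction $u_\Omega$; on the domains under consideration $|A|^2$ is bounded and the Gaussian weight makes the embedding $W_0^{1,2}(\Omega,d\nu)\hookrightarrow L^2(\Omega,d\nu)$ compact, so the usual spectral theory applies.

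Given this, the inequality $\lambda_1(\Omega_2)\le\lambda_1(\Omega_1)$ is immediate: extending any $f\in C_0^\infty(\Omega_1)$ by zero yields an element of $W_0^{1,2}(\Omega_2,d\nu)$ with the same Rayleigh quotient, so the infimum over the larger domain is no larger. For strictness I would argue by contradiction. If $\lambda_1(\Omega_2)=\lambda_1(\Omega_1)$, then the zero-extension $\tilde u$ of $u_{\Omega_1}$ attains the infimum defining $\lambda_1(\Omega_2)$, hence is a first eigenfunction on $\Omega_2$: $L\tilde u=-\lambda_1(\Omega_2)\tilde u$ weakly, and by interior elliptic regularity $\tilde u\in C^\infty(\Omega_2)$. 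Since $\Omega_1$ is connected we may take $u_{\Omega_1}\ge0$ (replace it by $|u_{\Omega_1}|$, which has the same Rayleigh quotient), so $\tilde u\ge0$ on $\Omega_2$; but $\tilde u\equiv0$ on the open set $\Omega_2\setminus\overline{\Omega_1}$, which is nonempty precisely because $\Omega_2$ is strictly larger than $\Omega_1$. Writing the equation as $\Delta\tilde u-\tfrac12\langle\vec x,\nabla\tilde u\rangle+c\,\tilde u=0$ with $c=|A|^2+\tfrac12+\lambda_1(\Omega_2)$ smooth, the strong maximum principle (after subtracting a constant $\ge\sup c$ on a relatively compact connected subdomain, so that the zeroth-order coefficient becomes $\le0$), or equivalently Aronszajn's unique continuation theorem, forces $\tilde u\equiv0$ on the connected set $\Omega_2$ — contradicting $u_{\Omega_1}\not\equiv0$. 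Therefore $\lambda_1(\Omega_2)<\lambda_1(\Omega_1)$.

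The substantive points — the ``main obstacle'' — are the functional-analytic ones behind the first paragraph: one must justify existence and interior smoothness of the first Dirichlet eigenfunction for the relevant class of domains (on a self-shrinker these need not be relatively compact, e.g.\ slabs $\{a<x_n<b\}$ on a cylinder, in which case one invokes compactness of $W_0^{1,2}(\Omega,d\nu)\hookrightarrow L^2(\Omega,d\nu)$ coming from the Gaussian weight), and that a first eigenfunction cannot change sign. One also needs to fix the meaning of ``strictly larger'': for the argument above it suffices, and is what holds in every application in this paper, that $\Omega_2$ contain an open set disjoint from $\overline{\Omega_1}$. Once these are in place, the strict monotonicity is exactly the unique-continuation argument above, which goes through verbatim because the drift $-\tfrac12\vec x$ and the potential $|A|^2+\tfrac12$ are smooth.
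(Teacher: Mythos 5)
The paper states this theorem only as a citation to \cite{CM1} and gives no proof of its own, so there is nothing in the dissertation itself to compare against. Your argument is the standard one and is correct: you observe that $L$ is self-adjoint on the Gaussian-weighted $L^2$ space via the drift form
\[
Lf=e^{|\vec x|^2/4}\operatorname{div}\bigl(e^{-|\vec x|^2/4}\nabla f\bigr)+\bigl(|A|^2+\tfrac12\bigr)f,
\]
derive the Rayleigh-quotient characterization of $\lambda_1(\Omega)$, obtain $\lambda_1(\Omega_2)\le\lambda_1(\Omega_1)$ by zero-extension, and then promote this to a strict inequality by noting that equality would force the zero-extension of a first eigenfunction on $\Omega_1$ to be a first eigenfunction on $\Omega_2$ vanishing on the nonempty open set $\Omega_2\setminus\overline{\Omega_1}$, which unique continuation (or the Hopf strong maximum principle applied after adjusting the zeroth-order coefficient) rules out. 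This is exactly the mechanism behind the result in \cite{CM1}. Two small cautions, which you already flag: on unbounded domains (slabs in a cylinder, complements of balls in a hyperplane) the compactness of $W_0^{1,2}(\Omega,d\nu)\hookrightarrow L^2(\Omega,d\nu)$ and hence existence of a minimizer need the Gaussian weight and the polynomial volume growth hypothesis on $\Sigma$, not just ``boundedness of $|A|^2$''; and the phrase ``strictly larger'' must be read, as you do, as ``$\Omega_2$ contains an open set disjoint from $\overline{\Omega_1}$,'' since without an extra open piece the strict inequality can fail.
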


Now note that on each of the three regions in Corollary \ref{52}, the Jacobi function $g_1=x_n^2-2$ is actually an eigenfunction.  This is because for each of the three regions, $g_1\in L_2$ with the weighted metric $d\tilde\mu=e^{-\frac{|x|^2}4}d\mu$.  Likewise, the Jacobi function on each of the two regions in Proposition \ref{53} is $f_1=r^2-2(n-k)$.  This is in $L_2$ on each of the two regions with respect to the weighted measure, so $f_1$ is an eigenfunction on each region.

The fact that these Jacobi functions are also eigenfunctions is important, because it means that $0$ is an eigenvalue of $L$ on each of the regions under consideration.  We have already shown that each of these regions is stable, meaning that $L$ has no negative eigenvalues on each region.  Thus, $0$ must be the lowest eigenvalue of $L$ on each of these regions.  Then by the domain monotonicity of the lowest eigenvalue, making any of the regions larger would decrease the lowest eigenvalue below $0$, so the region would no longer be stable.

The situation is quite different when $\Sigma=\R^n$ is a hyperplane.  Recall from Proposition \ref{47} that the only globally defined Jacobi functions are constant multiples of $f_1$, where
$$f_1(r)=-1+\sum\limits_{m=1}^\infty \frac{m(2m-2)!}{2^{3m-1}(m!)^2\prod\limits_{j=0}^{m-1}(n+2j)}r^{2m}.$$  This function clearly has exactly one root when $r\geq 0$.  As before, we continue to call this root $r_1$.  Then the $(n-1)$-sphere $\{r=r_1\}$ splits $\Sigma$ into two stable regions given by $\{r<r_1\}$ and $\{r>r_1\}$.  Since $f_1$ is bounded on compact sets, we clearly have $f_1\in L_2(\{r<r_1\})$ with the weighted metric.  Thus, $0$ is the smallest eigenvalue on this region.  However, we now show that $f_1$ is not in the weighted $L_2$ space on $\{r>r_1\}$, and hence $f_1$ is not an eigenfunction on the unbounded region.

\begin{lem}\label{62} Let $\Sigma=\R^n\subset\R^{n+1}$ be a self-shrinker.  Define
$$f_1(r)=-1+\sum\limits_{m=1}^\infty \frac{m(2m-2)!}{2^{3m-1}(m!)^2\prod\limits_{j=0}^{m-1}(n+2j)}r^{2m},$$ and let $r_1$ denote the positive root of $f_1$.  Then $f_1$ is a positive Jacobi function on $\{r>r_1\}$, but it is not an eigenfunction on that same region.
\end{lem}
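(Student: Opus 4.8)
The plan is to dispatch the (easy) positivity claim from the structure of $f_1$ and then prove the $L^2$ claim by promoting it to a global statement on $\R^n$ and contradicting Proposition \ref{39}. The positivity is essentially built into the construction of $f_1$ in Proposition \ref{47}: the defining series has $f_1(0)=-1$ and all nonconstant coefficients positive, so $f_1'$ is a power series with nonnegative coefficients, strictly positive on $(0,\infty)$; hence $f_1$ is strictly increasing on $(0,\infty)$, has a unique root $r_1>0$, and $f_1>0$ on $(r_1,\infty)$. Since $Lf_1=0$ on all of $\Sigma$ (also by Proposition \ref{47}), $f_1$ is a positive Jacobi function on $\{r>r_1\}$. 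Thus the content of the lemma is the second assertion; by the convention in force here and by Definition \ref{10}, this means showing that $f_1$ does \emph{not} lie in $L^2$ of $\{r>r_1\}$ against the weighted measure $e^{-|\vec x|^2/4}\,d\mu$. (Note that $f_1$ \emph{does} vanish on the boundary sphere $\{r=r_1\}$ and satisfies $Lf_1=0$ there, so the weighted-$L^2$ failure is the only obstruction to it being an eigenfunction of the region.)

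First I would reduce to $\R^n$. Since $f_1$ is a power series convergent for all $r$, it is continuous, hence bounded, on the compact ball $\{r\le r_1\}$, which has finite weighted volume; so $\int_{\{r\le r_1\}}|f_1|^2 e^{-|\vec x|^2/4}\,d\mu<\infty$. Consequently, if $f_1$ belonged to the weighted $L^2$ space on $\{r>r_1\}$ it would belong to the weighted $L^2$ space on all of $\R^n=\Sigma$, and since $Lf_1=0$ there, Definition \ref{10} would exhibit $f_1$ as an eigenfunction of $L$ on $\R^n$ with eigenvalue $0$.

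Next I would contradict this using Proposition \ref{39}. Its enumeration of eigenvalues and eigenfunctions is complete; the eigenvalue $0$ arises exactly when $\sum_i k_i=1$, and the corresponding eigenfunctions are the coordinate functions $x_1,\dots,x_n$, so the $0$-eigenspace of $L$ on $\R^n$ is their span. Then $f_1(|\vec x|)=\sum_i a_i x_i$ for constants $a_i$; but the left side is even under $\vec x\mapsto-\vec x$ and the right side is odd, forcing all $a_i=0$ and $f_1\equiv0$, which contradicts $f_1(0)=-1$. Hence $f_1$ is not in the weighted $L^2$ space on $\{r>r_1\}$, as claimed.

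The one place to be careful is the bookkeeping: that ``eigenfunction of a region'' in Definition \ref{10} imposes nothing beyond weighted-$L^2$ membership (so nothing is being silently assumed), and that one may legitimately split the weighted $L^2$ norm over $\R^n$ into the compact core and the unbounded end. I do not expect a genuine obstacle. If one preferred to avoid invoking the completeness in Proposition \ref{39}, a direct alternative is available: the recurrence $c_{2m+2}=\tfrac{2m-1}{2(2m+2)(n+2m)}\,c_{2m}$ from Proposition \ref{47} gives $c_{2m+2}/c_{2m}\sim 1/(4m)$, so $f_1(r)$ grows like $e^{r^2/4}$ times a power of $r$, whence $\int_{r_1}^\infty f_1(r)^2 e^{-r^2/4} r^{n-1}\,dr$ diverges like $\int^\infty e^{r^2/4}r^{n-1-C}\,dr=\infty$; this is more computational, and I would present the spectral argument instead.
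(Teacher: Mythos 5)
Your proof is correct, but it takes a genuinely different route from the paper's. The paper proves the failure of weighted $L^2$ integrability by a direct asymptotic estimate: writing $f_1(r)=\sum a_{2m}r^{2m}$ and $e^{r^2/8}=\sum b_{2m}r^{2m}$, it uses Stirling's approximation to show $a_{2m}/b_{2m}\to\infty$, hence $f_1(r)\ge e^{r^2/8}$ for $r$ large, so $f_1^2 e^{-r^2/4}r^{n-1}$ does not tend to zero and the weighted integral diverges. You instead promote the question to a global spectral statement: boundedness of $f_1$ on the compact core $\{r\le r_1\}$ means that membership in the weighted $L^2$ of the end would force $f_1\in L^2(\R^n)$; then $Lf_1=0$ together with the completeness of the Hermite eigenbasis from Proposition \ref{39} (and Lemma \ref{41}, which underlies it) would make $f_1$ a linear combination of the coordinate functions $x_1,\dots,x_n$, contradicted at once by $f_1(0)=-1\neq 0$ (or, as you note, by parity). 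Your argument is cleaner and avoids the Stirling bookkeeping, at the cost of leaning on the completeness statement in Proposition \ref{39}; the paper's argument is self-contained and additionally supplies a quantitative lower bound for the growth of $f_1$, which is of independent interest. The computational alternative you sketch at the end, using the ratio of coefficients to conclude $f_1$ grows like $e^{r^2/4}$ up to a power of $r$, is in the same spirit as the paper's proof (in fact a sharper version of it), so your instinct that it is ``more computational'' is exactly right; the paper elects the weaker bound $f_1\gtrsim e^{r^2/8}$ because it suffices.
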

\begin{proof} We know from Proposition \ref{47} that $Lf_1=0$, so it is a Jacobi function.  It is clearly positive on $\{r>r_1\}$.  It thus suffices to show that $f_1$ is not an eigenfunction on that same region.  To do this, we will show that $f_1$ is not in the weighted $L_2$ space on this region.

Letting $C_n$ denote the volume of the unit $n$-sphere, we obtain the following expressions for the $L_2$ norm of $f_1$.
\begin{align}\|f_1\|_{L^2} \notag &= C_n\int\limits_0^\infty r^n f_1^2 e^{-\frac {r^2}4} dr \\
\|f_1\|_{L^2} \notag &= C_n\int\limits_0^\infty r^n \left(f_1 e^{-\frac {r^2}8}\right)^2 dr
\end{align}
We wish to show that this integral diverges.  For this, it is sufficient to show that the integrand does not go to $0$ as $r\rightarrow\infty$.  We will show more, namely that the integrand actually diverges to $\infty$ as $r\rightarrow\infty$.  To show this, we will show that for large enough $r$ $$\left(f_1 e^{-\frac {r^2}8}\right)>1.$$
Clearly this is equivalent to the claim that for large enough $r$, we have $f_1>e^{\frac{r^2}8}$.  For a fixed dimension $n$, we define the constants $a_{2m}$ and $b_{2m}$ by the following equations.
$$f_1(r)=-1+\sum\limits_{m=1}^\infty \frac{m(2m-2)!}{2^{3m-1}(m!)^2\prod\limits_{j=0}^{m-1}(n+2j)}r^{2m}=\sum\limits_{m=0}^\infty a_{2m}r^{2m}$$
$$e^{\frac{r^2}8}=\sum\limits_{m=0}^\infty \frac 1{2^{3m} m!} r^{2m} = \sum\limits_{m=0}^\infty b_{2m}r^{2m}$$

We will show that \begin{equation}\label{54}\lim\limits_{m\rightarrow\infty} \frac{a_{2m}}{b_{2m}}=\infty.\end{equation}  We now assume this fact and show how this completes the proof.

Assuming Equation \ref{54}, we see there exists some $M$ such that $a_{2m}>2 b_{2m}$ for all $m\geq M$.  Then for $r\geq 1$,
\begin{align} f_1-e^{\frac{r^2}8} \notag &= \sum\limits_{m=0}^\infty a_{2m} r^{2m} - \sum\limits_{m=0}^\infty b_{2m} r^{2m} \\
f_1-e^{\frac{r^2}8} \notag &= \sum\limits_{m=0}^\infty (a_{2m} - b_{2m}) r^{2m} \\
f_1-e^{\frac{r^2}8} \notag &= \sum\limits_{m=0}^{M-1} (a_{2m} - b_{2m}) r^{2m} + \sum\limits_{m=M}^\infty (a_{2m} - b_{2m}) r^{2m} \\
f_1-e^{\frac{r^2}8} \notag &\geq \sum\limits_{m=0}^{M-1} -|a_{2m} - b_{2m}| r^{2m} + \sum\limits_{m=M}^\infty b_{2m} r^{2m} \\
f_1-e^{\frac{r^2}8} \notag &\geq \left(\sum\limits_{m=0}^{M-1} -|a_{2m} - b_{2m}|\right) r^{2M-2} + \left(\sum\limits_{m=M}^\infty b_{2m}\right) r^{2M} \\
f_1-e^{\frac{r^2}8} \notag &\geq (-A + Br^2)r^{2M-2}
\end{align}

In the above, $A$ and $B$ are positive constants.  Thus, for large enough $r$, $f_1>e^{\frac{r^2}8}$.  Thus, $$f_1 e^{-\frac{r^2}8}\geq  e^{\frac{r^2}8} e^{-\frac{r^2}8}=1.$$  This completes the proof.

Thus, it suffices to prove Equation \ref{54}.  We will do this in two cases, depending on whether the dimension $n$ is even or odd.  Suppose first that $n$ is even.  Then there exists an integer $d$ such that $n=2d$.  We restrict attention to $m\geq 1$ and compute.

\begin{align} \prod\limits_{j=0}^{m-1} (n+2j) \notag &= (2d)(2d+2)\cdots(2d+2(m-1)) \\
\prod\limits_{j=0}^{m-1} (n+2j) \notag &= 2^m d(d+1)(d+2)\cdots (d+m-1) \\
\prod\limits_{j=0}^{m-1} (n+2j) \notag &= \frac{2^m(d+m-1)!}{(d-1)!} \\
\frac {a_{2m}}{b_{2m}} \notag &= \frac{m(2m-2)!}{2^{3m-1}(m!)^2\prod\limits_{j=0}^{m-1}(n+2j)} 2^{3m} m! \\
\frac {a_{2m}}{b_{2m}} \notag &= \frac{2m(2m-2)!}{(m!)\prod\limits_{j=0}^{m-1}(n+2j)} \\
\frac {a_{2m}}{b_{2m}} \notag &= \frac{2m(2m-2)!(d-1)!}{(m!)2^m(d+m-1)!} \\
\frac {a_{2m}}{b_{2m}} \notag &= \frac{(2m)!(d-1)!(d+m)}{(m!)(d+m)!2^m(2m-1)}
\end{align}

We now apply Stirling's Approximation.  For large $m$ $$m!\approx \sqrt{2\pi m}\left(\frac m e\right)^m.$$
This approximation is valid in the limit, which is all we care about.

\begin{align} \lim\limits_{m\rightarrow\infty}\frac {a_{2m}}{b_{2m}} \notag &= \lim\limits_{m\rightarrow\infty} \frac{2\sqrt{\pi m}\left(\frac{2m}e\right)^{2m}(d-1)!(d+m)}{\sqrt{2\pi m}\left(\frac{m}e\right)^m \sqrt{2\pi(d+m)}\left(\frac{d+m}e\right)^{d+m}2^m (2m-1)} \\
\lim\limits_{m\rightarrow\infty}\frac {a_{2m}}{b_{2m}} \notag &= \lim\limits_{m\rightarrow\infty} \frac{\left(\frac{2m}e\right)^{m}(d-1)!(d+m)}{ \sqrt{\pi(d+m)}\left(\frac{d+m}e\right)^{d+m} (2m-1)} \\
\lim\limits_{m\rightarrow\infty}\frac {a_{2m}}{b_{2m}} \notag &= \frac{(d-1)!e^d}{\sqrt \pi}\lim\limits_{m\rightarrow\infty} \frac {2^m}{(d+m)^{d-\frac 1 2}} \left(\frac m{d+m}\right)^m \\
\lim\limits_{m\rightarrow\infty}\frac {a_{2m}}{b_{2m}} \notag &= \frac{(d-1)!e^d}{\sqrt \pi}\lim\limits_{m\rightarrow\infty} \frac {2^m}{(d+m)^{d-\frac 1 2}} \frac 1{(1+\frac d m)^m} \\
\lim\limits_{m\rightarrow\infty}\frac {a_{2m}}{b_{2m}} \notag &= \frac{(d-1)!}{\sqrt \pi}\lim\limits_{m\rightarrow\infty} \frac {2^m}{(d+m)^{d-\frac 1 2}} \\
\lim\limits_{m\rightarrow\infty}\frac {a_{2m}}{b_{2m}} \notag &= \infty
\end{align}

This proves Equation \ref{54} whenever the dimension $n$ is even.  When $n$ is odd, we notice that
$$\frac{2m(2m-2)!}{(m!)\prod\limits_{j=0}^{m-1}(n+2j)} > \frac{2m(2m-2)!}{(m!)\prod\limits_{j=0}^{m-1}(n+1+2j)}.$$  However, we already know from the above computation that this expression goes to $\infty$ as $m\rightarrow \infty$.  This proves Equation \ref{54} in the case when $n$ is odd, which completes the proof.
\end{proof}

Since $f_1$ is not an eigenfunction on the region $\{r>r_1\}$ in the hyperplane, it is possible that the smallest eigenvalue of $L$ on this region is strictly positive.  In this case, it would be possible to find some value $C<r_1$ such that the region in the hyperplane given by $\{r>C\}$ is stable. We already know that any subset of the ball $\{r<r_1\}$ is stable, so this would show that any $(n-1)$-sphere between $\{r=C\}$ and $\{r=r_1\}$ splits the hyperplane into two stable region.

In the following chapter we will use a very different argument to show that this phenomenon does occur in the case of a flat plane in $\R^3$.  In particular, we will show that the region in the $2$-plane given by $\{r>\sqrt 2\}$ is stable (see Theorem \ref{49}).  In Remark \ref{55} we estimate the value of $r_1$ for the $2$-plane to be about $2.514$.  Thus, any circle centered at the origin with radius between $\sqrt 2$ and $r_1\approx 2.514$ splits the plane into two stable regions.

We note that $f_1$ is not an eigenfunction on $\{r>r_1\}$ in a hyperplane of any dimension, and also the index of every hyperplane is $1$.  These facts lead us to conjecture that every hyperplane has a similar non-uniqueness property.  However, our argument in the case of a plane will depend on a special convergence result in $\R^3$, so we are unable at the present time to generalize these results to higher dimensions.

We note that in spite of our interest in the value $r_1$, we have yet to investigate it directly.  We do this in the following proposition and remark.

\begin{Pro}\label{48} Let $\Sigma=\R^n\subset\R^{n+1}$ be a self-shrinker.  The portion of $\Sigma$ given by $$\left\{r>2\sqrt{\left( n+2\right)\left(\sqrt{\frac{3n+2}{n+2}}-1\right)}\right\}$$ is stable.
\end{Pro}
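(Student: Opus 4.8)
The plan is to exhibit an explicit positive Jacobi function on the region $\{r>C\}$, where $C=2\sqrt{(n+2)\left(\sqrt{\frac{3n+2}{n+2}}-1\right)}$; by Definition \ref{8}, the existence of such a function is precisely what stability means. The function will be $f_1$ itself, the globally defined radial Jacobi function produced in Proposition \ref{47}, part \ref{66}. Recall that $f_1(r)=-1+\sum_{m=1}^\infty c_{2m}r^{2m}$ with
$$c_{2m}=\frac{m(2m-2)!}{2^{3m-1}(m!)^2\prod_{j=0}^{m-1}(n+2j)}>0\qquad(m\ge1),$$
and that $Lf_1\equiv0$ on all of $\Sigma$. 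Since every $c_{2m}$ with $m\ge1$ is strictly positive, discarding the terms with $m\ge3$ gives the pointwise lower bound
$$f_1(r)\ \ge\ -1+c_2 r^2+c_4 r^4=-1+\frac{r^2}{4n}+\frac{r^4}{32n(n+2)}=:q(r)\qquad(r\ge0),$$
where I have used $c_2=\tfrac1{4n}$ and $c_4=\tfrac1{32n(n+2)}$, and the discarded tail $\sum_{m\ge3}c_{2m}r^{2m}$ is strictly positive for $r>0$.

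Next I would locate where the quartic truncation $q$ is nonnegative. Writing $s=r^2$, the positive root of $\tfrac{s^2}{32n(n+2)}+\tfrac{s}{4n}-1=0$ is found from the quadratic formula: using $c_2^2+4c_4=\tfrac1{16n^2}+\tfrac1{8n(n+2)}=\tfrac{3n+2}{16n^2(n+2)}$ and simplifying, one gets $s=4(n+2)\left(\sqrt{\tfrac{3n+2}{n+2}}-1\right)=C^2$. Since $q$ is increasing in $s=r^2\ge0$ and vanishes at $r=C$, it follows that $q(r)\ge0$ for all $r\ge C$, and therefore $f_1(r)\ge q(r)+\sum_{m\ge3}c_{2m}r^{2m}>0$ for all $r\ge C$, in particular on $\{r>C\}$. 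Hence $f_1$ restricted to $\{r>C\}$ is a positive Jacobi function, so $\{r>C\}$ is stable.

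There is no analytic difficulty beyond what is already in Proposition \ref{47} and Definition \ref{8}. The only step requiring genuine care is the algebraic simplification of the quadratic root into the closed form stated in the proposition, together with correctly reading off $c_2$ and $c_4$ from the general coefficient formula; the essential point that makes the two-term truncation a legitimate lower bound is the strict positivity of all the higher coefficients $c_{2m}$, which holds for every $m\ge1$.
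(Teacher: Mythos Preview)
Your proof is correct and follows essentially the same approach as the paper's: both use the globally defined radial Jacobi function $f_1$ from Proposition~\ref{47}, exploit the strict positivity of all the non-constant coefficients to bound $f_1$ below by its fourth-order truncation, and then solve for the positive root of that quartic to obtain the constant $C$. Your write-up makes the quadratic-formula step a bit more explicit, but the argument is the same.
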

\begin{proof} By Proposition \ref{47} we know that $L f_1=0$, where $$f_1(r)=-1+\sum\limits_{m=1}^\infty \frac{m(2m-2)!}{2^{3m-1}(m!)^2\prod\limits_{j=0}^{m-1}(n+2j)}r^{2m}.$$
As we mentioned in that proof, all of the terms in this power series are positive except the constant term.  Thus there exists a single positive root of the series, and we call that root $r_1$.  We also see that all of the partial sums are strictly less than $f_1$ for $r>0$.  Thus, if we find a root of a partial sum, then we know the root $r_1$ of the full series must be lower.  We are thus able to estimate $r_1$ from above by considering only the first few terms of $f_1$.

To second order, $f_1(r)=-1+\frac 1 {4n} r^2$.  This has the positive root $r=2\sqrt{n}$.

To fourth order, $f_1(r)=-1+\frac 1 {4n} r^2+\frac 1 {2^5 n(n+2)}r^4$.  This has the positive root $$r=2\sqrt{(n+2)\left(\sqrt{\frac{3n+2}{n+2}}-1\right)}.$$

Thus, $f_1$ is a positive Jacobi function of $L$ on the region of $\Sigma$ given by $$\left\{r>2\sqrt{\left( n+2\right)\left(\sqrt{\frac{3n+2}{n+2}}-1\right)}\right\}$$ which completes the proof.
\end{proof}

\begin{rem}\label{55}  In the above proof, we approximate the root $r_1$ of the power series $f_1$ using partial sums.  We now use a computer program to find a numerical approximation of $r_1$ for the first few values of $n$.  We compare these to the partial sum approximations from Proposition \ref{48} in the following table.

In the first column we show the dimension $n$ of the hyperplane.  In the second column we give a numerical value of the second order approximation $r_1\approx 2\sqrt{n}.$  In the third column we give a numerical value of the fourth order approximation $r_1\approx 2\sqrt{(n+2)\left(\sqrt{\frac{3n+2}{n+2}}-1\right)}$.  In the last column we give a numerical approximation of $r_1$ obtained using Mathematica.  All numbers are rounded to three decimal places.
\vspace{1cm}
\begin{center}
\begin{tabular}{|c|c|c|c|}
\hline
\multicolumn{4}{|c|}{Approximate values of $r_1$} \\
\hline
n & 2nd order & 4th order & Full \\ \hline \hline
2 & 2.828 & 2.574 & 2.514  \\ \hline
3 & 3.464 & 3.109 & 3.004 \\ \hline
4 & 4     & 3.558 & 3.408 \\ \hline
5 & 4.472 & 3.954 & 3.760 \\ \hline
6 & 4.899 & 4.312 & 4.076 \\ \hline
7 & 5.292 & 4.642 & 4.364 \\ \hline
\end{tabular}
\end{center}
\vspace{1cm}

\end{rem}

We now turn our attention to the situation in $\R^3$.  In this setting more is known, so we can obtain improved results.

\newpage

\chapter{Half-Infinite, Stable Self-Shrinkers with Boundary in $\R^3$}\label{60}

\section{Background Results}

In this chapter we look at circular slices of the self-shrinking cylinder $\s^1\times\R\subset\R^3$.  We let the $x_3$-axis be the axis of symmetry of the cylinder.

\begin{defn} Let $\Sigma=\s^{1}\times\R$ be a self-shrinker.  For each $a\in\R$, we let $$\gamma_a=\Sigma\cap\{x_3=a\}.$$
\end{defn}

For each of these slices $\gamma_a$ we will find a stable, half-infinite self-shrinker with boundary $\gamma_a.$  Recall from Corollary \ref{52} that when $a\geq \sqrt 2$ the portion of the cylinder given by $\{x_3>a\}$ is stable.  Likewise when $a\leq -\sqrt 2$ we have the stable region $\{x_3< a\}$.  We will spend the rest of this chapter finding a stable half-infinite self-shrinker with boundary $\gamma_a$ for the remaining values of $a$.  In Section \ref{56} we will show the existence of such a surface for each $a\in (0,\sqrt 2)$.  By symmetry this also solves the problem for $a\in(-\sqrt 2, 0)$.  In Section \ref{57} we will analyze these surfaces and use them to show that the portion of the plane given by $\{|\vec x|>\sqrt 2\}$ is stable.

We begin by stating results that hold in as much generality as possible, but we will be forced to specialize to the low dimensional case before long. We first wish to define a class of objects which play a fundamental role in geometric measure theory, the class of rectifiable currents.  This in turn requires the introduction of Hausdorff measure.

\begin{defn} Let $A\subset \R^n$, and define the diamater of $A$ to be
$$diam(A)=\sup{|x-y|:x,y\in A}$$
Let $\alpha_m$ equal the volume of the unit ball in $\R^m$.  Then the $m-$dimensional Hausdorff measure of $A$ is defined by $$\textbf{H}^m(A)=\lim\limits_{\delta\rightarrow 0^+}\inf\limits_{\substack{
A\subset\cup S_j \\
diam(S_j)\leq\delta}}\sum\limits_{j}\alpha_m\left(\frac{diam(S_j)} 2\right)^m$$
The Hausdorff dimension of $A$ is given by $$dim_\textbf{H}(A)=\inf\{m\geq 0:\textbf{H}^m(A)=0\}$$
\end{defn}

\begin{defn}\label{25} Consider a Borel set $B\subset \R^n$.  We say $B$ is $(\textbf{H}^m,m)$ rectifiable if it has the following properties.
\begin{enumerate}
\item There exist at most countably many bounded subsets $K_i\subset\R^m$ and Lipschitz maps $f_i:\R^m\rightarrow\R^n$ such that $B=\cup f_i(K_i)$ (ignoring sets of measure 0).
\item $\textbf{H}^m(B)<\infty$
\end{enumerate}

A rectifiable $m$-current is a compactly supported, oriented $(\textbf{H}^m,m)$ rectifiable set with integer multiplicities.
\end{defn}

\begin{rem} Currents are usually defined as linear functionals on differential forms.  We are not explicitly stating the definition in that form.  However, it is possible to integrate a smooth differential form $\phi$ over a rectifiable current defined above.  In this way, each rectifiable current $B$ gives rise to the following linear functional on differential forms.
$$\phi\rightarrow\int\limits_B \phi$$
The inclusion of integer multiplicities in definition \ref{25} can then be seen as necessary to allow for the standard additivity of linear functionals.
\end{rem}

It was necessary to define rectifiable currents, because these are the basic objects dealt with in geometric measure theory.  The general approach often taken to proving an existence result such as the one we are pursuing is to first show the existence of a rectifiable current with the desired minimization property.

However, as can be seen from definition \ref{25}, rectifiable currents are extremely general objects which often bear little resemblance to our usual notion of a surface.  Thus it is then necessary to show that the obtained minimizing current is regular, or smooth.  Otherwise, the object obtained will still be an area minimizer, but it cannot be considered a minimal surface.

Our argument will be a little more complicated.  We will first show the existence and regularity of a sequence of self-shrinkers in increasingly large compact sets.  We will then show that there exists a subsequence of these self-shrinkers which converges to the desired half-infinite self-shrinker.  The following general theorems of Federer will form the foundation of the first steps in this argument.

\begin{thm}\label{27}\cite{F} Suppose $T$ is a rectifiable current in a compact, $C^1$ Riemannian manifold $M$.  Then consider the set of all rectifiable currents $S$ in $M$ such that $\partial S=\partial T$.  There exists at least one such current $S$ of least area.
\end{thm}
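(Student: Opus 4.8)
The plan is to prove this by the direct method of the calculus of variations, carried out in the space of rectifiable currents; the two analytic inputs that make it work are the Federer--Fleming compactness theorem and the lower semicontinuity of mass under weak convergence, and once these are granted the argument is routine. I would first fix the competitor class consisting of all rectifiable $m$-currents $S$ in $M$ with $\partial S=\partial T$, and write $\mathbf{M}(\cdot)$ for the mass (area). This class is nonempty, since $T$ itself belongs to it, so the infimum $m_{0}$ of $\mathbf{M}(S)$ over the class satisfies $0\le m_{0}\le\mathbf{M}(T)<\infty$. Choose a minimizing sequence $S_{j}$ in this class with $\mathbf{M}(S_{j})\to m_{0}$; in particular $\sup_{j}\mathbf{M}(S_{j})<\infty$.

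Next I would extract a weakly convergent subsequence. Since $M$ is compact, every $S_{j}$ is supported in the fixed compact set $M$; and since $\partial S_{j}=\partial T$ for all $j$, the boundary masses are constant, so $\sup_{j}\bigl(\mathbf{M}(S_{j})+\mathbf{M}(\partial S_{j})\bigr)=\sup_{j}\mathbf{M}(S_{j})+\mathbf{M}(\partial T)<\infty$. (If one prefers a Euclidean ambient space, one can embed $M$ isometrically in some $\R^{N}$ and push the currents forward; the uniform mass and support bounds persist.) The Federer--Fleming compactness theorem then provides a subsequence, still written $S_{j}$, and a rectifiable $m$-current $S$ supported in $M$ with $S_{j}\to S$ weakly.

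It remains to identify $S$ as a minimizer. The boundary operator is continuous under weak convergence, so $\partial S=\lim_{j}\partial S_{j}=\partial T$; hence $S$ lies in the competitor class, and therefore $\mathbf{M}(S)\ge m_{0}$. On the other hand mass is lower semicontinuous under weak convergence, so $\mathbf{M}(S)\le\liminf_{j}\mathbf{M}(S_{j})=m_{0}$. Combining the two inequalities gives $\mathbf{M}(S)=m_{0}$, i.e.\ $S$ has least area among all rectifiable currents with the same boundary as $T$, which is the assertion.

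The real content lies in the two cited facts, which is why the theorem is attributed to Federer rather than proved from scratch here. The compactness theorem rests on the deformation theorem (approximation by polyhedral currents with controlled mass, and the isoperimetric-type inequality it produces), together with the closure and boundary-rectifiability theorems needed to guarantee that the weak limit is again a rectifiable current whose boundary is rectifiable; the lower semicontinuity of mass follows from its description as a supremum of the pairings of the current against smooth forms of comass at most one. These are the hard steps, and the extraction argument above is then the standard selection procedure.
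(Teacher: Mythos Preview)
Your outline is the standard direct-method argument and is correct as stated; the only point worth flagging is that the Federer--Fleming compactness theorem needs the currents to be integral (integer-multiplicity rectifiable with rectifiable boundary), but the paper's Definition~\ref{25} already builds integer multiplicities into ``rectifiable current,'' and in the intended application $\partial T$ is a pair of smooth circles, so $\mathbf{M}(\partial T)<\infty$ is automatic.

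As for comparison with the paper: there is nothing to compare. The paper does not prove Theorem~\ref{27}; it is quoted verbatim from Federer~\cite{F} as a black box and then invoked in Lemma~\ref{18}. Your write-up is precisely the argument one would sketch if asked to justify the citation, and you correctly identify where the real work lies (the deformation theorem, closure and boundary-rectifiability, and lower semicontinuity of mass).
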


Note that the above theorem holds in all dimensions.  However, the following regularity result is more restrictive.

\begin{thm}\label{28}\cite{F} Let $T\subset \R^n$ be an $(n-1)$ dimensional, area minimizing rectifiable current.  Then in its interior $T$ is a smooth, embedded manifold except for a singular set of Hausdorff dimension at most $(n-8)$.  In particular, if $n\leq 7$ then $T$ has no interior singularities.
\end{thm}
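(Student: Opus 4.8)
The plan is to reproduce the classical interior regularity theory for codimension-one area-minimizing rectifiable currents, whose pillars are the monotonicity formula, De~Giorgi's $\varepsilon$-regularity theorem, and Federer's dimension-reduction argument, with Simons' theorem on minimal cones supplying the sharp exponent. First I would establish the \emph{monotonicity formula}: since an area-minimizing current $T$ is stationary, for any interior point $x$ the mass ratio $r \mapsto \|T\|(B_r(x)) / (\omega_{n-1} r^{n-1})$ is non-decreasing, so it has a limit $\Theta(T,x) \ge 1$ and the density $\Theta(T,\cdot)$ is upper semicontinuous. Using the compactness theorem for integral currents together with lower semicontinuity of mass, every rescaling $(\eta_{x,r})_\# T$ subconverges as $r \to 0$ to a tangent current which inherits the area-minimizing property, and the equality case of the monotonicity formula forces each such tangent current to be a cone with vertex $0$; iterating, a tangent cone to a tangent cone is again an area-minimizing cone.

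The next ingredient is the \emph{base regularity step}: if $\Theta(T,x) = 1$, then the tangent cone at $x$ is a multiplicity-one hyperplane, and De~Giorgi's theorem --- an excess-decay iteration upgrading flatness to smoothness, or equivalently Allard's regularity theorem --- shows that $T$ is a smooth embedded hypersurface in a neighborhood of $x$. Defining $\mathrm{sing}(T)$ to be the set of interior points near which $T$ is not a smooth embedded manifold, it follows that $\Theta(T,x) > 1$ at every point of $\mathrm{sing}(T)$.

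Now I would run \emph{Federer's dimension-reduction argument}. One stratifies $\mathrm{sing}(T)$ according to the dimension of the maximal linear subspace of translational symmetries of the tangent cones and iterates the blow-up procedure; the outcome is that if $\dim_{\mathbf H}\mathrm{sing}(T) > n - m$ for some integer $m$, then there must exist a nontrivial area-minimizing hypercone in $\R^{m}$ that is singular only at the origin. Consequently the exponent in the theorem is controlled by the least ambient dimension admitting a singular area-minimizing hypercone.

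The hard part is exactly this last input --- \emph{Simons' nonexistence theorem}: for $m \le 7$ there is no area-minimizing (indeed no stable minimal) hypercone in $\R^m$ other than a hyperplane. One expresses the second variation of area of a cone $C$ in terms of the Jacobi operator on its link $L = C \cap \s^{m-1}$, reduces the problem to a radial Euler-type ODE, and shows by a sharp eigenvalue comparison --- the bottom of the Jacobi spectrum of $L$ against the radial growth rate $(m-2)/2$ dictated by the cone scaling --- that $C$ carries a compactly supported area-decreasing variation unless $L$ is an equator, i.e. unless $C$ is flat. The exponent $n-8$ is optimal, since the Simons cone $C(\s^3 \times \s^3) \subset \R^8$ and its products $C(\s^3\times\s^3) \times \R^{n-8} \subset \R^n$ are area-minimizing by Bombieri--De~Giorgi--Giusti. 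Feeding Simons' theorem back through the dimension reduction gives $\dim_{\mathbf H}\mathrm{sing}(T) \le n-8$, and in particular $\mathrm{sing}(T) = \varnothing$ when $n \le 7$.
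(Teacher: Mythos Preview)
Your outline is a faithful sketch of the classical interior regularity theory for codimension-one area-minimizing currents, and the ingredients you name --- monotonicity, tangent cones, De~Giorgi/Allard $\varepsilon$-regularity, Federer's dimension reduction, and Simons' theorem on stable cones --- are exactly the ones that go into the result. There is nothing to compare against, however: the paper does not prove this theorem at all. It is stated with a citation to Federer's \emph{Geometric Measure Theory} and used as a black box in the existence argument of Lemma~\ref{18}; no argument is supplied or even sketched.

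One small caution on your write-up: the implication ``$\Theta(T,x)=1$ implies the tangent cone is a multiplicity-one plane'' is slightly too quick in the way you phrase it --- what you actually use is that \emph{some} tangent cone has constant density equal to $1$, hence by the constancy theorem is a multiplicity-one plane, and then Allard gives local regularity. Also, the stratification step is stated loosely; to make it rigorous you need the upper-semicontinuity of density along converging sequences of minimizers and the fact that the singular set of a cone that splits off $\R^k$ is itself a product, so that the blow-up at a top-stratum singular point yields a cone with an isolated singularity in the complementary dimension. These are standard refinements, but worth flagging if you intend this as more than an outline.
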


\section{Existence of the Surfaces}\label{56}

We now show that there exists some stable half-infinite MCF self-shrinker with boundary $\gamma_a$ for each $a\in [0,\sqrt 2)$.  We will need a few well known results from elsewhere which we will cite and use without proof.

Note that by Theorem \ref{11}, MCF self-shrinkers in $\R^3$ can also be considered as minimal surfaces with respect to the conformal metric $$d\tilde\mu=e^{\frac{-|\vec x|^2} 4}d\mu.$$  This is because with respect to this metric $F_{0,1}$ is just a multiple of the area functional.  Then by Theorem \ref{12}, the stability of a self-shrinker is equivalent to its stability as a minimal surface with respect to $d\tilde\mu.$  We can therefore appeal directly to the known existence and regularity results for stable minimal surfaces in order to show existence and regularity of stable self-shrinkers.

Our approach will be to construct a sequence of compact, stable self-shrinkers in increasingly large domains.  We will obtain a sequence of surfaces with one boundary component fixed at $\gamma_a$ and the other running off to infinity along $C$.  We will then show that some subsequence of these surfaces converges to a stable, half-infinite self-shrinker, and furthermore this limit surface is axially symmetric.  The existence of a convergent subsequence will follow from Theorem \ref{21} below, which only holds in three dimensional manifolds.  As such, for the remainder of this chapter we will restrict attention to surfaces in $\R^3$.

In the following lemma, we will construct the sequence of self-shrinkers with one boundary component $\gamma_a$ and the other a parallel, coaxial circle $\gamma_b$.  We wish to exclude the possibility that for some choice of $b$ our solution splits into two disconnected topological discs, one with boundary $\gamma_a$ and the other with boundary $\gamma_b$.  To accomplish this, we will solve the Plateau problem in a domain from which the interior of $C$ has been removed.

\begin{lem}\label{18} Fix $a\in [0,\sqrt 2)$, and let $b_a$ be given by Proposition \ref{17}.  Let $b\in\mathbb{Z}$ satisfy $b>b_a$.  Let $\Omega_b=\{(x,y,z)\in\R^3:x^2+y^2+z^2\leq b^2+2, x^2+y^2\geq 2\}$.  Then there exists a stable, embedded self-shrinker $\Sigma_b\subset \Omega_b$ such that $\partial\Sigma_b=\gamma_a\cup\gamma_b$.
\end{lem}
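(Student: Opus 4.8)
The plan is to obtain $\Sigma_b$ as an area minimizer for the conformal metric $d\tilde\mu=e^{-|\vec x|^2/4}\,d\mu$ and then to apply regularity theory. Recall from Theorems \ref{11} and \ref{12} that a surface in $\R^3$ is a self-shrinker exactly when it is minimal for $d\tilde\mu$, and that stability of a self-shrinker is the same as stability as a $d\tilde\mu$-minimal surface, i.e. vanishing of the stability index (Definition \ref{16}); since $\Omega_b$ is compact and the weight $e^{-|\vec x|^2/4}$ is smooth and bounded between positive constants on $\Omega_b$, restricting attention to $\Omega_b$ changes none of this. So it suffices to produce a smooth, embedded, $d\tilde\mu$-area-minimizing surface in $\Omega_b$ whose boundary is $\gamma_a\cup\gamma_b$.

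First I would run the direct method. The annular piece of the cylinder $\{x^2+y^2=2,\ a\le x_3\le b\}$ is a rectifiable current lying in $\partial\Omega_b$ with boundary $\gamma_a-\gamma_b$, so the class of competitors is nonempty; applying the existence theorem for area minimizers in a compact region with prescribed boundary (Theorem \ref{27} and the routine extension from closed manifolds to compact regions with boundary) yields an area-minimizing integral $2$-current $\Sigma_b\subset\Omega_b$ with $\partial\Sigma_b=\gamma_a-\gamma_b$. (It is worth noting that the inner wall $\{x^2+y^2=2\}$ of $\Omega_b$ is itself the self-shrinking cylinder $\s^1\times\R$, hence $d\tilde\mu$-minimal, so by the strong maximum principle $\Sigma_b$ meets it only along $\gamma_a$ and $\gamma_b$; for $b>b_a$, where the cylindrical annulus is unstable by Proposition \ref{17}, this forces $\Sigma_b$ to be a different surface than that annulus.) Such a minimizer is automatically stable: the second variation of $d\tilde\mu$-area at $\Sigma_b$ is nonnegative for every compactly supported normal variation vanishing on $\partial\Sigma_b$, which says exactly that $L$ has no negative Dirichlet eigenvalue on $\Sigma_b$, so its stability index is $0$.

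The next step is regularity. Since the ambient dimension $3$ is at most $7$, Theorem \ref{28} gives that $\Sigma_b$ is a smooth, embedded, multiplicity-one surface in its interior. Smoothness up to $\gamma_a$ follows from boundary regularity for codimension-one area minimizers, as $\gamma_a$ is a smooth curve on the smooth part of $\partial\Omega_b$; the curve $\gamma_b$ lies on the edge of $\Omega_b$ where the bounding sphere meets the inner wall, but there $\Sigma_b$ is pinned to the prescribed smooth curve and the two faces meeting along that edge serve as barriers, so the same theory applies. Embeddedness is part of the codimension-one regularity conclusion. I expect this boundary and corner regularity to be the main obstacle, since Theorem \ref{28} as quoted is purely interior and the edge of $\Omega_b$ carrying $\gamma_b$ requires separate care.

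Finally I would exclude the degenerate possibility that $\Sigma_b$ is a pair of disjoint discs, one spanning $\gamma_a$ and one spanning $\gamma_b$ --- which is exactly why the interior of the cylinder $\{x^2+y^2<2\}$ was removed from the ball. The region $\Omega_b$ deformation retracts onto a circle, and each of $\gamma_a$ and $\gamma_b$ represents a generator of $H_1(\Omega_b;\mathbb Z)\cong\mathbb Z$, so neither bounds in $\Omega_b$. Hence $\Sigma_b$ cannot be written as a $2$-current bounding $\gamma_a$ plus a $2$-current bounding $\gamma_b$; in particular it is not two discs. A minimizer has no closed component either, since deleting one would strictly lower $d\tilde\mu$-area while keeping the boundary fixed, and a component whose boundary is exactly one of $\gamma_a,\gamma_b$ is again forbidden by the homology computation; so $\Sigma_b$ is connected and joins $\gamma_a$ to $\gamma_b$ through $\Omega_b$. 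Together with the preceding steps this produces the stable, embedded self-shrinker $\Sigma_b\subset\Omega_b$ with $\partial\Sigma_b=\gamma_a\cup\gamma_b$ asserted by the lemma.
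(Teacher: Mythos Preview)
Your proposal is correct and follows essentially the same approach as the paper: pass to the Gaussian metric, apply Federer's existence theorem (Theorem~\ref{27}) with the cylindrical annulus as a competitor, invoke the codimension-one regularity theorem (Theorem~\ref{28}), and read off stability from area minimization. The paper's proof is considerably terser than yours; it does not spell out the homological argument excluding two discs (this is left as the stated motivation for removing the solid cylinder from $\Omega_b$), and it defers boundary regularity to the subsequent Lemma~\ref{20}, where Hardt--Simon is cited.
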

\begin{proof}  We view $\Omega_b$ as a subset of $\R^3$ endowed with the metric $d\tilde\mu=e^{\frac{-|\vec x|^2} 4}d\mu$ from Theorem \ref{11}.  Note that the portion of the cylinder given by $\{a<x_3<b\}$ is a rectifiable current in $\Omega_b$ with boundary $\gamma_a\cup\gamma_b$.  Thus, the existence of a least area current $\Sigma_b\subset\Omega_b$ with the same boundary is guaranteed by Theorem \ref{27}.  The fact that this current is also a smooth embedded surface with no interior singular points follows from Theorem \ref{28}.  Area minimizing surfaces with respect to $d\tilde\mu$ are stable minimal surfaces, so $\Sigma_{b}$ is a stable, embedded self-shrinker with $\partial\Sigma_{b}=\gamma_a\cup\gamma_b.$
\end{proof}

We wish to show the convergence of a subsequence of $\{\Sigma_b\}$, but for this we need a uniform bound on the curvature of the $\Sigma_b$.  The following theorem of Schoen gives this bound away from the boundary.

\begin{thm}\label{19}\cite{S} Let $\Sigma$ be an immersed stable minimal surface with trivial normal bundle, and let $B_{r_0}\subset \Sigma\setminus\partial\Sigma$ be an intrinsic open ball of radius $r_0$ contained in $\Sigma$ and not touching the boundary of $\Sigma$.  Then there exists a constant $C$ such that for any $\sigma>0$, $$\sup\limits_{B_{r_0-\sigma}}|A|^2 \leq C\sigma^{-2}.$$
\end{thm}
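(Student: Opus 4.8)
The plan is to follow Schoen's argument \cite{S}: combine the second-variation (stability) inequality with Simons' identity for $|A|$, and then run a Moser iteration. Both inputs are scale invariant in $\R^3$, so it suffices to produce a universal constant $C$ with $\sup_{B_{r_0/2}}|A|^2\le Cr_0^{-2}$; the stated estimate then follows by applying this to the intrinsic ball of radius $\sigma$ about an arbitrary point of $B_{r_0-\sigma}$, which is again an intrinsic ball of radius $\sigma$ not meeting $\partial\Sigma$. Stability says that for every Lipschitz $\phi$ supported in $\Sigma\setminus\partial\Sigma$,
$$\int_\Sigma|A|^2\phi^2\,d\mu\le\int_\Sigma|\nabla\phi|^2\,d\mu,$$
while Simons' identity for a two-dimensional minimal hypersurface, together with the refined Kato inequality $|\nabla A|^2\ge 2|\nabla|A||^2$ special to this dimension, gives away from the zero set of $|A|$ the differential inequalities
$$\tfrac12\Delta|A|^2=|\nabla A|^2-|A|^4\ge -|A|^4,\qquad |A|\,\Delta|A|\ge|\nabla|A||^2-|A|^4.$$
(When the ambient space is a $3$-manifold rather than $\R^3$ --- as for the conformal metric $e^{-|\vec x|^2/4}d\mu$ on the bounded region $\Omega_b$ of Lemma \ref{18} --- these pick up lower-order terms bounded by the ambient curvature, absorbable into the constants since that geometry is uniformly bounded on $\Omega_b$.)

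The first step is a universal $L^2$ bound on $A$. Using the stability inequality with a logarithmic cutoff, together with the two-dimensional Gauss equation $|A|^2=-2K$ and a Gauss--Bonnet estimate for the geodesic curvature of intrinsic circles, one obtains $\int_{B_{r_0/2}}|A|^2\,d\mu\le C$ with $C$ universal (this is automatic in the application, where $\Sigma_b\subset\Omega_b$ has bounded total area). I would then upgrade this to an $L^4$ bound: with $u=|A|$ and a cutoff $\phi$, multiply the second Simons inequality by $\phi^2$, integrate by parts, and substitute $u\phi$ into the stability inequality; after Cauchy--Schwarz and absorbing the gradient terms this yields $\int_{B_{r_0/2}}|A|^4\,d\mu\le Cr_0^{-2}\int_{B_{r_0}}|A|^2\,d\mu\le Cr_0^{-2}$. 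The fact that the absorption closes up is precisely where $\dim\Sigma=2$ enters, since two is below the dimension at which stability loses control of this quantity (the Schoen--Simon--Yau range). In particular $|A|^2\in L^2_{\mathrm{loc}}$ and $2>1=\dim\Sigma/2$.

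Finally, the first Simons inequality exhibits $w:=|A|^2$ as a subsolution, $\Delta w+Vw\ge 0$ with $V:=2|A|^2\in L^2_{\mathrm{loc}}$, $2>\dim\Sigma/2$. Since the Michael--Simon Sobolev inequality holds on $\Sigma$ with a constant independent of $\Sigma$, the De Giorgi--Nash--Moser iteration applies and yields a local maximum principle
$$\sup_{B_{r_0/2}}w\le C\Big(\frac{1}{\mu(B_{r_0})}\int_{B_{r_0}}w^{2}\,d\mu\Big)^{1/2},$$
which combined with the $L^4$ bound gives $\sup_{B_{r_0/2}}|A|^2\le Cr_0^{-2}$; rescaling as in the first paragraph finishes the proof. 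The main obstacle is the upgrade step: one must choose the test function and track the Kato constant so that the gradient terms can be absorbed, which succeeds only in the low-dimensional range; there is also a topological subtlety in the Gauss--Bonnet argument behind the initial $\int|A|^2$ bound. Once these are handled the Moser iteration is routine, given the Michael--Simon inequality and the $L^2_{\mathrm{loc}}$ integrability of the potential. (Alternatively one could blow up at a point of maximal curvature and use a compactness theorem together with the classification of complete stable minimal surfaces in $\R^3$ as planes, but the integral-estimate route above is more self-contained.)
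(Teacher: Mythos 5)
The paper does not actually prove Theorem \ref{19}; it is stated with a direct citation to Schoen \cite{S}, so there is no ``paper's proof'' to compare against. Your proposal is a genuine (and essentially correct) proof of the cited result, but along a route different from Schoen's original argument. Schoen's 1983 paper proves the interior estimate by contradiction: a point-picking argument selects points of near-maximal curvature, rescaling produces a sequence of stable minimal surfaces with $|A|$ bounded and $|A|(0)=1$, and a compactness argument yields a complete nonflat stable minimal surface in $\R^3$, contradicting the do Carmo--Peng / Fischer-Colbrie--Schoen classification of such surfaces as planes. That is precisely the ``alternative'' you sketch in your final parenthetical, and it is what the citation refers to.

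Your primary route --- stability plus the refined Kato-improved Simons inequality to get an $L^4$ bound \`a la Schoen--Simon--Yau, followed by De Giorgi--Nash--Moser with the Michael--Simon Sobolev inequality --- is a well-known alternative and the key estimates are set up correctly: the Kato constant in two dimensions gives $|\nabla A|^2\ge2|\nabla|A||^2$, so $|A|\,\Delta|A|\ge|\nabla|A||^2-|A|^4$, and combining this with stability tested on $|A|\phi$ does close up to give $\int\phi^2|\nabla|A||^2\le\int|A|^2|\nabla\phi|^2$ and hence $\int|A|^4\phi^2\le C\int|A|^2|\nabla\phi|^2$; since $|A|^2\in L^2_{\mathrm{loc}}$ and $2>\dim\Sigma/2=1$, Moser then yields the scale-invariant pointwise bound once $r\|V\|_{L^2(B_r)}$ is checked to be uniformly controlled (which it is). The one genuine lacuna is the initial $\int_{B_{r_0/2}}|A|^2\le C$ step: the na\"{\i}ve cutoff only gives $\int|A|^2\phi^2\le\mu(B_{r_0})/r_0^2$, so you need the quadratic intrinsic-area bound $\mu(B_{r_0})\le Cr_0^2$ for stable minimal disks, which is a theorem in its own right (Pogorelov, Colding--Minicozzi) and not circular only when one sets up the Gauss--Bonnet/stability argument carefully and controls the topology of the intrinsic balls. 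You flag this, and it is ``automatic in the application'' only because you already have the global area bound from the Plateau construction in $\Omega_b$ --- so as a proof of the general cited theorem the flag is the right place to be cautious, but as used in Lemma \ref{20} the gap is closed. On balance: correct sketch, different route from the source, and the trade-off is that Schoen's blow-up argument outsources the hard analysis to the Bernstein-type classification, while your integral route trades that for the quadratic area estimate and the borderline SSY/Kato bookkeeping in dimension two.
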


\begin{lem}\label{20} For large enough $b$, and away from the boundary component $\gamma_b$, the family of surfaces $\Sigma_b$ from Lemma \ref{18} have uniformly bounded $|A|^2$ up to and including the boundary component $\gamma_a$.
\end{lem}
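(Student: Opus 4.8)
The plan is to bound $|A|^2$ separately in two regions: the interior of $\Sigma_b$ (points intrinsically far from $\partial\Sigma_b = \gamma_a\cup\gamma_b$) and a collar of the fixed boundary circle $\gamma_a$. Throughout I view each $\Sigma_b$ as a stable, embedded minimal surface for the conformal metric $d\tilde\mu = e^{-|\vec x|^2/4}d\mu$ (Theorems \ref{11} and \ref{12}). I first record a uniform area bound $\widetilde{\mathrm{Area}}(\Sigma_b)\le A_0$, obtained by comparing $\Sigma_b$ with the cylindrical competitor $\{a<x_3<b\}\cap C$, whose $\tilde\mu$-area is bounded independently of $b$ thanks to the Gaussian weight. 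I also use that $\gamma_a$ is literally the \emph{same} curve for every $b$ while $\gamma_b$ recedes to infinity, so that $\gamma_b$ leaves every fixed compact set once $b$ is large.

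For the interior estimate I apply Schoen's curvature estimate, Theorem \ref{19}: if $p\in\Sigma_b$ has intrinsic distance at least $r$ from $\partial\Sigma_b$, then $|A|^2(p)\le C r^{-2}$, where $C$ depends only on the geometry of the conformal metric on a fixed compact region and hence is independent of $b$. Thus, for any fixed $\delta>0$, the bound can only fail on $\{p\in\Sigma_b:\operatorname{dist}_{\Sigma_b}(p,\gamma_b)\ge\delta\}$ through points lying within intrinsic distance $\delta$ of $\gamma_a$, and proving a uniform bound for those points is all that remains.

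Suppose, then, the conclusion fails near $\gamma_a$: there are $b_j\to\infty$ and $p_j\in\Sigma_{b_j}$ with $\operatorname{dist}_{\Sigma_{b_j}}(p_j,\gamma_{b_j})\ge\delta$, with $p_j\to p_\infty\in\gamma_a$ (forced by the interior estimate), and with $\lambda_j:=|A|(p_j)\to\infty$. Translate $p_j$ to the origin and dilate by $\lambda_j$, obtaining surfaces $\widetilde\Sigma_j$ with $|A|^2=1$ at the origin. The metrics $\lambda_j^2\,(\text{translate of }d\tilde\mu)$ converge in $C^\infty_{\mathrm{loc}}$ to a constant multiple of the Euclidean metric, since the weight is smooth and positive near $p_\infty$; the obstacle cylinder rescales to the tangent plane $\Pi$ of $C$ at $p_\infty$; and, because $\gamma_a\subset C$, the rescaled copies of $\gamma_a$ either escape to infinity or converge to a straight line $\ell\subset\Pi$. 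Using the uniform area bound, the monotonicity formula, interior regularity of area minimizers in $\R^3$ (Theorem \ref{28}), and boundary regularity for minimizers with smooth prescribed boundary (Hardt--Simon, with Allard's boundary regularity theorem), a subsequence of $\widetilde\Sigma_j$ converges smoothly, away from its limit boundary and with multiplicity one, to an area-minimizing (hence stable) minimal surface $\Sigma_\infty$ with $|A|^2(\text{origin})=1$. If the rescaled boundaries escaped, $\Sigma_\infty$ is complete in $\R^3$, so $|A|^2\le Cr^{-2}$ for every $r$ by Theorem \ref{19}, forcing $A\equiv0$. Otherwise $\Sigma_\infty\subset\overline H$ for the half-space $\overline H$ on the permitted side of $\Pi$, with $\partial\Sigma_\infty=\ell\subset\partial H$; the constant $2$-form dual to the unit normal of $\Pi$ calibrates the flat half-plane $P_0\subset\Pi$ with $\partial P_0=\ell$, so $\mathrm{Area}(\Sigma_\infty)\ge\mathrm{Area}(P_0)$, while minimality gives $\mathrm{Area}(\Sigma_\infty)\le\mathrm{Area}(P_0)$; hence equality and $\Sigma_\infty=P_0$, so again $A\equiv0$. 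In either case $|A|^2(\text{origin})=1$ is contradicted, which proves the lemma.

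The main obstacle is this blow-up analysis near $\gamma_a$. Two points require care: (i) obtaining enough regularity and a priori control of $\Sigma_b$ \emph{up to} the fixed boundary $\gamma_a$ — this is not covered by the interior statement of Theorem \ref{28}, and is precisely why one must bring in the boundary regularity theory for area-minimizing hypersurfaces with smooth prescribed boundary; and (ii) the bookkeeping in the rescaling, namely checking that the obstacle and the boundary pass to the limit as described and that the varifold convergence improves to smooth, multiplicity-one convergence (which uses embeddedness and the normalization $|A|=1$ at the origin after rescaling to exclude collapsing sheets). Once the blow-up is correctly set up, the interior estimate and the two Liouville-type conclusions about $\Sigma_\infty$ are routine.
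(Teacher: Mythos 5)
Your approach is structurally the same as the paper's: a blow-up contradiction argument near $\gamma_a$ built on Schoen's interior estimate (Theorem~\ref{19}) plus boundary regularity from Hardt--Simon, with a case split according to whether the rescaled boundary persists or escapes. In the case where the boundary escapes, your use of Schoen's estimate to force $A\equiv 0$ on the complete limit is equivalent to the paper's invocation of Bernstein's theorem, so that step is fine. Two differences are worth flagging.

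First, your compactness step is heavier than necessary. After rescaling by $\lambda_j=|A|(p_j)$ chosen at the maximum of $|A|$, the surfaces $\widetilde\Sigma_j$ satisfy a uniform bound $|A|\le 1$ and $|A|(0)=1$, which already gives local uniform graphical $C^{1,\alpha}$ bounds (and then $C^\infty$ by Schauder); Arzel\`a--Ascoli produces the smooth, non-collapsed limit directly, without appealing to varifold compactness, monotonicity, or Allard boundary regularity. This is what the paper does, and it is cleaner.

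Second, and more substantively, there is a gap in your treatment of the case where the rescaled boundary persists as a straight line $\ell$. Your calibration argument, as written, asserts $\mathrm{Area}(\Sigma_\infty)\ge\mathrm{Area}(P_0)$ and $\le\mathrm{Area}(P_0)$, but both quantities are infinite, so one must compare exhaustions $\Sigma_\infty\cap B_R$ versus $P_0\cap B_R$. These compact pieces do not have matching boundary on $\partial B_R$, so the Stokes/calibration identity picks up a flux term over $\partial B_R$ that you have not controlled; without some a~priori decay of $\Sigma_\infty$ toward $\Pi$ at infinity, the comparison does not close. The paper sidesteps this entirely by invoking P\'erez's theorem \cite{P} that a stable embedded minimal surface in $\R^3$ bounded by a straight line is a half-plane; that result does the work your calibration is attempting and does not even need the half-space constraint you extract from the rescaled obstacle. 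If you wish to keep a self-contained argument, you would need to justify the area comparison at infinity (e.g.\ via a monotonicity/density argument together with the half-space constraint), or else simply cite P\'erez as the paper does.
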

\begin{proof} If we fix $\sigma>0$, we obtain a bound on $|A|^2$ for each $\Sigma_b$ further than $\sigma$ away from $\gamma_a$.  We still need to worry about something going wrong near $\gamma_a$.  However, we know from Hardt-Simon \cite{HS} that each $\Sigma_b$ is regular up to the boundary.  Thus, we know that each $\Sigma_b$ has bounded $|A|^2$ up to the boundary.  It thus suffices to rule out the possibility that these bounds on $|A|^2$ are themselves unbounded.  That is, we need to show that a single bound holds for every $\Sigma_b$ in the sequence.

We aren't concerned about the behavior of $\Sigma_b$ near the boundary component $\gamma_b$, because these components don't show up in the limit.  As such, for the rest of this proof, we will restrict attention to the interior of a large ball in $\R^3$, say the ball of radius 20.  We will also assume that $b>30$, so that we are sufficiently far away from $\gamma_b$.  Thus, Schoen's curvature estimate from Theorem \ref{19} holds for every $\Sigma_b$ except near $\gamma_a$.

Suppose by way of contradiction that there exists a subsequence $\Sigma_i$ such that $$\max\limits_{\Sigma_i}|A|>i.$$  For each $i$, pick a point $x_i\in\Sigma_i$ such that $|A(x_i)|=\max\limits_{\Sigma_i}|A|$.  Then by Theorem \ref{19}, as $i$ increases, the $x_i$ must tend toward $\gamma_a$.

Define a new sequence $$\Gamma_i=|A(x_i)|(\Sigma_i-x_i).$$  Then this sequence has the property that each $\Gamma_i$ has $|A(0)|=1$ and $|A|\leq 1$ everywhere else.  Then since the $\Sigma_i$ are stable and embedded, the new surfaces $\Gamma_i$ are also embedded and stable with respect to the dilated metric.  These dilated metrics are becoming more and more flat as $i\rightarrow\infty$.  Thus by Arzela-Ascoli, some subsequence of the $\Gamma_i$ converges to a limit surface $\Gamma.$  This surface is embedded and stable as a surface in $\R^3$.  It also has $|A(0)|=1$.

There are now two cases, depending on what happens to $\gamma_a$ in the limit.  Case 1 is that $\gamma_a$ runs off to $\infty$ and does not show up in the limit.  In that case $\Gamma$ is a complete, stable minimal surface in $\R^3.$  However, Bernstein's Theorem then says that $\Gamma$ is a flat plane.  This contradicts the fact that $\Gamma$ has $|A(0)|=1$, since a flat plane has $|A|\equiv 0$.

Case 2 is that $\gamma_a$ does show up in the limit as the boundary of $\Gamma$.  In this case, note that the limit of $\gamma_a$ under our successive dilations is a straight line.  Then we have that $\Gamma$ is an embedded, area minimizing surface in $\R^3$ whose boundary is a straight line.  Then by a result of P\'{e}rez \cite{P}, $\Gamma$ is a half-plane.  However, this implies that $\Gamma$ has $|A|\equiv 0$, which contradicts the fact that $|A(0)|=1$.  This completes the proof.
\end{proof}

We now state a well-known compactness theorem stated by Anderson in \cite{A} that gives us convergence of a subsequence of $\{\Sigma_b\}$.

\begin{thm}\label{21}\cite{A} Let $\Omega$ be a bounded domain in a complete Riemannian 3-manifold $N^3$, and let $M_i$ be a sequence of minimally immersed surfaces in $\Omega$.  Suppose there is a constant $C$ such that the Gauss curvature $K_{M_i}(x)$ satisfies $|K_{M_i}(x)|<C$ for all $i$.  Then a subsequence of $\{M_i\}$ converges smoothly (in the $C^k$ topology, $k\geq 2$) to an immersed minimal surfaces $M_\infty$ (with multiplicity) in $\Omega$, and $|K_{M_\infty}(x)|\leq C$.  If each $M_i$ is embedded, then $M_\infty$ is also embedded.
\end{thm}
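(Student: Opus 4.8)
The statement is classical and due to Anderson \cite{A}; the plan below sketches the standard argument. The first step is to convert the Gauss curvature hypothesis into a bound on the second fundamental form. For a surface minimally immersed in a $3$-manifold the Gauss equation gives $K_{M_i}(x) = \bar K(x) - \tfrac12|A(x)|^2$, where $\bar K$ denotes the sectional curvature of $N^3$ along $T_xM_i$ and we used $H\equiv 0$ (so the principal curvatures are $\pm\kappa$ and $\det A = -\tfrac12|A|^2$). Since $\Omega$ is a bounded domain in a complete manifold, $\bar K$ is bounded on any $\Omega' \Subset \Omega$, so $|K_{M_i}| < C$ forces a uniform bound $|A|^2 = 2(\bar K - K_{M_i}) \le C'$ on $M_i\cap\Omega'$, with $C'$ independent of $i$.

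Next I would use this to obtain uniform graphical representations. A uniform bound on $|A|$, together with the bounded local geometry of $N$, yields an $r_0>0$ such that around each $p\in M_i$ with $\mathrm{dist}(p,\partial\Omega)>r_0$ the component of $M_i\cap B_{r_0}(p)$ through $p$ is the graph over $T_pM_i$ of a function $u_i$ with $\|u_i\|_{C^{1,\beta}}$ bounded independently of $i$. Since $u_i$ solves the minimal surface equation of $N$, a uniformly elliptic quasilinear second-order PDE whose coefficients are controlled by the (bounded, smooth) ambient metric, Schauder estimates promote this to a uniform $C^{k,\beta}$ bound for every $k$. Now fix an exhaustion $\Omega_1\Subset\Omega_2\Subset\cdots$ of $\Omega$: on each $\Omega_j$ the graphs have uniform $C^{k,\beta}$ bounds, so Arzel\`a--Ascoli plus a diagonal argument produces a subsequence converging in $C^k_{\mathrm{loc}}$, and the limiting graphs agree on overlaps and patch to an immersed surface $M_\infty$. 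Convergence in $C^2$ guarantees $M_\infty$ is minimal and that $|K_{M_\infty}|\le C$ passes to the limit; when several disjoint sheets of $M_i$ limit onto the same sheet of $M_\infty$ one records the number of sheets as the multiplicity.

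Finally, for the embeddedness claim: if every $M_i$ is embedded but $M_\infty$ were not, then $M_\infty$ would possess either a transverse self-intersection, which would persist under $C^1$-small perturbation and hence contradict embeddedness of the nearby $M_i$, or a tangential self-intersection, which is excluded by the strong maximum principle for minimal graphs (two minimal graphs touching at an interior point with the same tangent plane must coincide). Hence $M_\infty$ is embedded. The main obstacle is the bookkeeping of multiplicity and making the passage to the limit uniform near the portion of $\partial\Omega'$ interior to $\Omega$: one must verify that the graphical scale $r_0$ does not degenerate and that no area or sheets are lost except through the multiplicity already accounted for. This is precisely the technical core carried out in \cite{A}, which I would cite rather than reproduce.
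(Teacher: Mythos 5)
The paper does not prove this theorem; it is quoted directly from Anderson \cite{A}, so there is no in-paper argument to compare against. Your outline --- using the Gauss equation $K_{M} = \bar K - \tfrac{1}{2}|A|^{2}$ to convert the curvature hypothesis (together with compactness of $\overline{\Omega}$) into a uniform bound on the second fundamental form, passing to graphical representations at a uniform scale, bootstrapping with Schauder estimates for the ambient minimal surface equation, extracting a $C^{k}_{\mathrm{loc}}$-convergent subsequence by Arzel\`a--Ascoli and a diagonal argument, and invoking the strong maximum principle to rule out transversal or tangential self-intersections in the embedded case --- is the standard route and is essentially what Anderson's proof does, and your explicit deferral to \cite{A} for the multiplicity bookkeeping and the non-degeneracy of the graphical scale is appropriate since that is precisely the technical content of his paper.
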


\begin{thm}\label{22} There exists a subsequence of the surfaces $\Sigma_b$ from Lemma \ref{18} that converges to some limit surface $\Sigma_\infty$.  This limit surface $\Sigma_\infty$ is a stable, half-infinite, embedded self-shrinker with boundary $\gamma_a$.
\end{thm}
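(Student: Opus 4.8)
The plan is to obtain $\Sigma_\infty$ as a smooth subsequential limit of the $\Sigma_b$ from Lemma \ref{18}, using the uniform curvature bound of Lemma \ref{20} together with Anderson's compactness theorem (Theorem \ref{21}), and then to verify each asserted property of $\Sigma_\infty$ in turn. Throughout one works in $\R^3$ with the conformal metric $d\tilde\mu = e^{-|\vec x|^2/4}d\mu$, under which, by Theorem \ref{11}, self-shrinkers are exactly minimal surfaces and the $\Sigma_b$ are stable minimal surfaces.

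First I would fix the exhaustion $B_R = B_R(0)$, $R = 1,2,3,\dots$, of $\R^3$. For each $R$, Lemma \ref{20} gives a bound on $|A|^2$ over $\Sigma_b \cap B_R$ that is uniform in $b$ once $b$ is large enough that $\gamma_b$ lies well outside $B_R$, and this bound holds up to and across the fixed boundary circle $\gamma_a$; in particular the Gauss curvatures are uniformly bounded. Applying Theorem \ref{21} on the bounded domain $B_R$, together with boundary regularity near $\gamma_a$ (Hardt--Simon, cf.\ \cite{HS}, plus the uniform boundary curvature estimate), yields a subsequence converging smoothly on $B_R$, with embedded limit. A diagonal argument over $R$ produces a single subsequence $\Sigma_{b_j}$ converging smoothly on compact subsets of $\R^3$ to a limit $\Sigma_\infty$. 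By construction $\Sigma_\infty$ is an embedded minimal surface with respect to $d\tilde\mu$, hence a self-shrinker by Theorem \ref{11}; since the circles $\gamma_b$ run off to infinity they contribute no boundary inside any fixed ball, so $\partial\Sigma_\infty = \gamma_a$. The uniform curvature bound near $\gamma_a$ also prevents any degeneration there, so $\gamma_a \subset \Sigma_\infty$ and $\Sigma_\infty$ is nonempty.

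For stability, each $\Sigma_{b_j}$ is area minimizing, hence stable, with respect to $d\tilde\mu$; given any compactly supported Lipschitz function $\varphi$ on $\Sigma_\infty$, smooth convergence lets one transplant $\varphi$ onto $\Sigma_{b_j}$ for $j$ large and pass the second-variation inequality to the limit, so no relatively compact subdomain of $\Sigma_\infty$ has negative first eigenvalue of $L$; the standard exhaustion-and-eigenfunction argument then produces a positive solution of $Lu = 0$, i.e.\ a positive Jacobi function, which is stability in the sense of Definition \ref{8}. For half-infiniteness (noncompactness) the choice of $\Omega_b$ as a ball with the open solid cylinder $\{x_1^2+x_2^2 < 2\}$ deleted is essential: $\gamma_a$ winds once around this deleted cylinder, so it generates $H_1(\{x_1^2 + x_2^2 \ge 2\}) \cong \mathbb{Z}$ and is not null-homologous there. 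If $\Sigma_\infty$ were compact it would exhibit $\gamma_a = \partial\Sigma_\infty$ as a boundary inside $\{x_1^2+x_2^2 \ge 2\}$, forcing $[\gamma_a]=0$, a contradiction. (The same obstruction applied to $\Sigma_b$ inside $\Omega_b$ shows $\Sigma_b$ cannot split off a compact disc bounded by $\gamma_a$ alone, so the component of $\Sigma_b$ carrying $\gamma_a$ really does have its other boundary at $\gamma_b$.) Finally, the strong maximum principle for the $d\tilde\mu$-minimal equation shows $\Sigma_\infty$ meets the cylinder $C = \s^1\times\R$ only along $\gamma_a$, unless $\Sigma_\infty$ is one of the half-infinite cylindrical pieces $\{x_3\ge a\}$ or $\{x_3\le a\}$ of $C$ — but these are unstable for $a \in [0,\sqrt 2)$ by Proposition \ref{17}, so they are excluded.

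I expect the main obstacle to be the global, rather than purely local, nature of the statement: Anderson's theorem controls the surfaces only on bounded sets, so the real content is (i) ruling out loss of the surface at infinity or near $\gamma_a$, which is exactly what the uniform curvature estimate of Lemma \ref{20} buys, and (ii) showing the limit is genuinely noncompact with the correct boundary, where the homological placement of $\gamma_a$ relative to the deleted cylinder enters. A secondary technical point is that Anderson's convergence is a priori with multiplicity; this affects neither embeddedness nor stability, and along $\gamma_a$ the boundary regularity forces multiplicity one, so it can be handled routinely.
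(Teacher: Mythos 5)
Your proposal is correct and follows the same essential route as the paper: applying Anderson's compactness theorem (Theorem \ref{21}) on the exhaustion $\Omega_{b_0}\subset\Omega_{b_0+1}\subset\cdots$ and extracting a diagonal subsequence. The paper's own proof stops after the diagonal argument and simply asserts that the limit ``has the desired properties''; your verification of the boundary identification, stability, and noncompactness---in particular the homological observation that $\gamma_a$ generates $H_1(\{x^2+y^2\geq 2\})\cong\mathbb{Z}$ and so cannot bound a compact surface in that region---fills in details the paper leaves implicit.
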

\begin{proof} Recall from Lemma \ref{18} that $\Omega_b=\{(x,y,z)\in\R^3:x^2+y^2+z^2\leq b^2+2, x^2+y^2\geq 2\}$.  By Theorem \ref{21}, for any fixed $\Omega_{b_0}$, there exists a subsequence of $\{\Sigma_b\}$ that converges smoothly on $\Omega_{b_0}$.  We can then restrict attention to this subsequence, and find a further subsequence that converges on $\Omega_{b_0+1}$.  Repeating this process, we obtain a sequence of sequences.  We can then take the diagonal elements to form a single subsequence.  This subsequence converges to a limit surface $\Sigma_\infty$ with the desired properties.
\end{proof}

\section{Analysis of the Surfaces}\label{57}

We now state without proof a well known theorem that we will need in the following proof.

\begin{thm}\label{31} Sard's Theorem.

Given a smooth function $h$ from one manifold to another, the image of the set of critical points of $h$ has Lebesgue measure $0$.
\end{thm}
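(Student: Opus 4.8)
Sard's Theorem is classical; I only sketch the standard argument. The plan is to reduce the assertion to a statement about maps between Euclidean spaces and then induct on the dimension of the domain. Since both manifolds are second countable, the domain is covered by countably many coordinate charts, and since a diffeomorphism carries Lebesgue-null sets to Lebesgue-null sets, it suffices to prove: if $h\colon U\to\R^m$ is smooth on an open set $U\subseteq\R^n$ and $C\subseteq U$ is the set of points at which $Dh$ has rank less than $m$, then $h(C)$ has Lebesgue measure zero. I would argue by induction on $n$. The case $n=0$ is vacuous, and the case $m>n$ (in which $C=U$) follows already from local Lipschitz continuity: on a compact subcube of side $s$ the map is $L$-Lipschitz, so subdividing into $N^n$ subcubes of side $s/N$ covers the image by sets of total volume $O\!\left(N^n(s/N)^m\right)=O(N^{\,n-m})$, which tends to $0$ because $m>n$.

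For the inductive step I would stratify $C$ by letting $C_i\subseteq U$ be the set of points at which all partial derivatives of $h$ of order at most $i$ vanish, so that $C\supseteq C_1\supseteq C_2\supseteq\cdots$, and then establish three claims whose union yields the theorem: (i) $h(C\setminus C_1)$ has measure zero; (ii) $h(C_i\setminus C_{i+1})$ has measure zero for each $i\geq 1$; and (iii) $h(C_k)$ has measure zero whenever $m(k+1)>n$. For (i), near a point of $C\setminus C_1$ some first-order partial, say $\partial h_1/\partial x_1$, is nonzero, so $x\mapsto(h_1(x),x_2,\dots,x_n)$ is a local diffeomorphism; in the new coordinates $h$ becomes $(t,x')\mapsto(t,g(t,x'))$, the critical set of $h$ in each slice $\{t\}\times\R^{n-1}$ equals the critical set of $g(t,\cdot)$, the inductive hypothesis makes each slice's image null in $\{t\}\times\R^{m-1}$, and Fubini's theorem forces $h(C\setminus C_1)$ to be null. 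For (ii), near a point of $C_i\setminus C_{i+1}$ some $i$-th order partial $w$ of $h$ has $\partial w/\partial x_j\neq 0$ for some $j$, so $\{w=0\}$ is locally an $(n-1)$-dimensional submanifold containing $C_i$; restricting $h$ to it and invoking the inductive hypothesis handles this stratum. For (iii), Taylor's theorem at a point $x\in C_k$ gives $|h(y)-h(x)|\leq K|y-x|^{k+1}$ on a fixed compact subcube of $U$, so subdividing that subcube into $N^n$ subcubes of side of order $1/N$ covers $h(C_k)$ by sets of total volume $O\!\left(N^n N^{-m(k+1)}\right)$, which tends to $0$ exactly under the stated inequality. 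Since $m\geq 1$, some finite $k$ with $m(k+1)>n$ exists, so only the finitely many strata above need separate treatment, and covering $U$ by countably many cubes completes the proof.

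The only genuinely subtle point is the volume bookkeeping in step (iii): one must keep the exponent $m(k+1)$ exact and verify that a finite $k$ with $m(k+1)>n$ works, so that the descending chain $C\supseteq C_1\supseteq\cdots\supseteq C_k$ can be cut off after finitely many strata. The remaining ingredients — the reduction from manifolds to Euclidean space, the application of Fubini's theorem, and the local normal forms used in (i) and (ii) — are entirely routine.
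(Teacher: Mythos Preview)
Your sketch is correct and follows the standard argument (essentially Milnor's presentation): reduce to the Euclidean case, induct on the domain dimension, stratify the critical set by the order of vanishing of derivatives, and handle each stratum by either a normal-form change of coordinates plus Fubini or a Taylor-remainder volume estimate. The paper, however, does not prove this theorem at all; it is explicitly introduced as a well-known result stated without proof and then applied in the argument that follows. So there is no proof in the paper to compare against, and your proposal simply supplies the classical justification that the paper omits.
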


\begin{thm}\label{32} The surface $\Sigma_\infty$ from Theorem \ref{22} is rotationally symmetric about the $z$-axis.
\end{thm}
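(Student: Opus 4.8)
The plan is to show that $\Sigma_\infty$ must be invariant under every rotation about the $z$-axis by exploiting uniqueness of the area-minimizing surfaces $\Sigma_b$ together with the rotational symmetry of their boundary data and the ambient setup. Since $\gamma_a$ and $\gamma_b$ are both coaxial circles centered on the $z$-axis, and the conformal weight $e^{-|\vec x|^2/4}$ is rotationally symmetric, any rotation $R_\theta$ about the $z$-axis carries a minimizer $\Sigma_b$ (in $\Omega_b$, which is itself $R_\theta$-invariant) to another minimizer with the same boundary and the same area. If the minimizer with boundary $\gamma_a \cup \gamma_b$ inside $\Omega_b$ were unique, we would conclude $R_\theta \Sigma_b = \Sigma_b$ for all $\theta$, and rotational symmetry would pass to the limit $\Sigma_\infty$. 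The obstruction is that area minimizers need not be unique, so the first thing I would do is address this.

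Lacking uniqueness, the approach I would take instead is a reflection/symmetrization argument on the limit surface directly, using Sard's Theorem (Theorem \ref{31}). Consider the function $h:\Sigma_\infty \to \R$ given by $h(x,y,z) = \arctan(y/x)$ — more carefully, the angular coordinate — or better, work with half-planes through the $z$-axis. For a generic angle $\theta$ (generic by Sard applied to the restriction to $\Sigma_\infty$ of the angular function, so that the half-plane $P_\theta$ through the $z$-axis meets $\Sigma_\infty$ transversally in a smooth $1$-manifold), the half-plane $P_\theta$ cuts $\Sigma_\infty$ into two pieces. Reflecting one piece across $P_\theta$ and gluing produces a new surface with the same boundary $\gamma_a$ and, by the rotational symmetry of the weighted area functional, the same weighted area; moreover it is still weighted-area-minimizing with the given boundary among surfaces in the appropriate region. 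Since $\Sigma_\infty$ arose as a limit of minimizers it is itself a stable minimal (weighted-minimal) surface, and one can argue that the reflected-and-glued surface, if it has a crease along $P_\theta \cap \Sigma_\infty$, would fail to be minimal there unless $\Sigma_\infty$ already meets $P_\theta$ orthogonally. Running this for a dense set of $\theta$ forces $\Sigma_\infty$ to meet every such half-plane orthogonally, and a surface meeting all half-planes through the $z$-axis orthogonally is a surface of revolution.

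Alternatively — and this is probably the cleanest route given what is available in the paper — I would go back to the compact surfaces $\Sigma_b$ and use the maximum principle for minimal surfaces together with Alexandrov-type moving-plane (moving-halfplane) reflection. Fix $b$. For each angle $\theta$, let $R_\theta \Sigma_b$ be the rotated copy; it is a minimal surface (w.r.t.\ the weighted metric) with the same boundary $\gamma_a \cup \gamma_b$. Because $\Sigma_b$ is embedded and stable and obtained as a least-area surface, I would argue via the strong maximum principle that two distinct least-area surfaces with the same boundary in $\Omega_b$ cannot touch without coinciding; combined with a connectedness/continuity argument in $\theta$ starting from $\theta = 0$ where $R_0\Sigma_b = \Sigma_b$, this forces $R_\theta \Sigma_b = \Sigma_b$ for all $\theta$, i.e.\ each $\Sigma_b$ is rotationally symmetric. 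Then $\Sigma_\infty$, being a smooth limit of rotationally symmetric surfaces (Theorem \ref{22}), is rotationally symmetric.

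\textbf{The main obstacle} I anticipate is the non-uniqueness of area minimizers: a priori $\Sigma_b$ need not coincide with its rotations, and the maximum-principle argument needs care because the two surfaces $\Sigma_b$ and $R_\theta\Sigma_b$ share boundary components, so "touching only at the boundary" must be handled via the boundary maximum principle (Hopf lemma) and the regularity-up-to-the-boundary result of Hardt–Simon already invoked in Lemma \ref{20}. A secondary subtlety is ensuring the reflected surface in the Sard-based argument lies in the admissible class (in particular respects the constraint $x^2+y^2 \ge 2$ used in defining $\Omega_b$), which should be automatic since that region is rotationally symmetric. I would expect to spend most of the effort making the moving-plane comparison rigorous at the fixed boundary circle $\gamma_a$.
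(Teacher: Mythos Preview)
Your approach is genuinely different from the paper's, and the difficulties you identify (non-uniqueness of minimizers, boundary maximum principle) are real obstacles that you have not resolved. The paper sidesteps all of this with a purely analytic argument on the limit surface $\Sigma_\infty$ itself: let $\vec v = (-y,x,0)$ be the rotational Killing field and set $f = \langle \vec v, \vec n\rangle$; a direct computation using the Killing property and the self-shrinker equation shows $Lf = 0$. Since $\Sigma_\infty$ is stable, $L$ has no negative Dirichlet eigenvalues, so if $f \not\equiv 0$ then $0$ is the lowest eigenvalue and the corresponding eigenfunction $f$ cannot change sign; say $f \ge 0$. Now apply Sard to the \emph{height} function $z$ (not the angular coordinate you suggest): generic horizontal slices of $\Sigma_\infty$ are smooth curves, and $f \ge 0$ forces the radial distance $r$ to be monotone along each such curve, which together with embeddedness and the area-minimizing property rules out both spirals and non-circular closed curves. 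Hence $f \equiv 0$, i.e.\ $\vec v$ is everywhere tangent to $\Sigma_\infty$, which is exactly rotational symmetry.

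The paper's route is shorter and avoids the uniqueness question entirely, because stability alone feeds directly into the sign condition on the Jacobi function. Your Alexandrov/moving-plane idea could in principle be made to work, but the continuity-in-$\theta$ argument you sketch is incomplete: two minimizers sharing a boundary need not touch in the interior as $\theta$ varies away from $0$, so the strong maximum principle has nothing to bite on, and the boundary Hopf lemma requires knowing the surfaces meet the boundary circle at the same angle, which is precisely what you are trying to prove. The Killing-field/Jacobi argument is the standard way to bypass this.
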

\begin{proof} Let $\vec{v}$ be a vector field on $\Sigma_\infty$ given by rotation in a fixed direction about the $z$-axis.  For concreteness, at a point $(x,y,z)\in\Sigma_\infty$, let $\vec v=(-y,x,0)$.  At every point of $\Sigma_\infty$, let $\vec n$ denote the outward pointing unit normal.

We define the function $f:\Sigma_\infty\rightarrow \R$ by $f=<\vec v, \vec n>$.  Note that on the boundary $\gamma_a$, $f\equiv 0$.  We first show that $Lf=0$ which implies that $f$ is either identically $0$, or it is an eigenfunction of $L$ with eigenvalue $0$.  Recall from Definition \ref{8} that $$Lf=\Delta f-\frac 1 2 \langle \vec{x},\nabla f\rangle+(|A|^2+\frac 1 2)f.$$  Let $\{\vec{e_i}\}_{i=1}^2$ be an orthonormal frame for the tangent space to $\Sigma_\infty$.  We compute $Lf$ term by term, starting with $\Delta f$.  In the following computations, we follow Einstein's convention of summing over repeated indices.

\begin{align} f \notag &= \langle\vec v, \vec n\rangle \\
 \nabla_{\vec{e_i}}f \notag &= \langle\nabla_{\vec{e_i}}\vec v,\vec n\rangle+\langle\vec v, \nabla_{\vec{e_i}}\vec n\rangle \\
 \nabla_{\vec{e_i}}f \notag &= \langle\nabla_{\vec{e_i}}\vec v, \vec n\rangle-a_{ij}\langle\vec v,\vec{e_j}\rangle \\
 \nabla_{\vec{e_k}}\nabla_{\vec{e_i}}f \notag &= \langle\nabla_{\vec{e_k}}\nabla_{\vec{e_i}}\vec v, \vec n\rangle - a_{kj}\langle\nabla_{\vec{e_i}}\vec v, \vec{e_j}\rangle-a_{ij}\langle\nabla_{\vec{e_k}}\vec v,\vec{e_j}\rangle \\ \notag & \ \ \ -a_{ij,k}\langle\vec v, \vec{e_j}\rangle-a_{ij}\langle\vec v, \nabla_{\vec{e_k}}\vec{e_j}\rangle \\
 \Delta f \notag &= \langle\Delta \vec v, \vec n\rangle - 2a_{ij}\langle\nabla_{\vec{e_i}}\vec v, \vec{e_j}\rangle-a_{ii,j}\langle\vec v,\vec{e_j}\rangle - a_{ij}\langle\vec v,a_{ij}\vec n\rangle \\
 \Delta f \notag &= \langle\Delta \vec v, \vec n\rangle - 2a_{ij}\langle\nabla_{\vec{e_i}}\vec v, \vec{e_j}\rangle + \langle\vec v,\nabla H\rangle - |A|^2 f
\end{align}

We now use the fact that $\vec v$ is a Killing field, so for all vector fields $\vec X, \vec Y$ we have that $\langle \nabla_{\vec X}\vec v,\vec Y\rangle=-\langle \nabla_{\vec Y}\vec v,\vec X\rangle$. Thus

\begin{align} \langle \nabla_{\vec {e_i}}\vec v,\vec {e_i}\rangle \notag &= 0 \\
 a_{ij}\langle \nabla_{\vec {e_i}}\vec v,\vec {e_j}\rangle \notag &= \sum\limits_{i\neq j}a_{ij}\langle \nabla_{\vec {e_i}}\vec v,\vec {e_j}\rangle \\
 a_{ij}\langle \nabla_{\vec {e_i}}\vec v,\vec {e_i}\rangle \notag &= \sum\limits_{i < j}(a_{ij}-a_{ji})\langle \nabla_{\vec {e_i}}\vec v,\vec {e_j}\rangle \\
 a_{ij}\langle \nabla_{\vec {e_i}}\vec v,\vec {e_i}\rangle \notag &= 0
\end{align}

The last equality follows from the symmetry of the second fundamental form $A$.  Recall that since $\vec v$ is a rotation vector field, it is linear on $\R^3$.  Thus $\Delta_{\R^3}\vec v=0.$  However, we are working with the tangential Laplacian $\Delta_{\Sigma}$.  We compute

\begin{align} \Delta \vec v \notag &= \Delta_{\Sigma}\vec v = \Delta_{\R^3}\vec v - \nabla_{\nabla_{e_i}e_i}\vec v - \nabla_{\vec n}\nabla\vec v \\
\Delta \vec v \notag &= 0 - \nabla_{\nabla_{e_i}e_i}\vec v - 0 \\
\Delta \vec v \notag &= - \nabla_{a_{ii}\vec n}\vec v \\
\Delta \vec v \notag &= - a_{ii}\nabla_{\vec n}\vec v \\
\langle \Delta \vec v,\vec n\rangle \notag &= -a_{ii}\langle \nabla_{\vec n}\vec v, \vec n\rangle \\
\langle \Delta \vec v,\vec n\rangle \notag &= 0
\end{align}

Putting these pieces together yields $$\Delta f=\langle\vec v,\nabla H\rangle - |A|^2 f.$$

We now turn our attention to the second term in $Lf.$  Recall from Lemma \ref{30} that since $\Sigma_\infty$ is a self-shrinker its mean curvature $H$ satisfies $H=\frac 1 2 \langle \vec x, \vec n\rangle$.  We will use this fact in the following.

\begin{align}H \notag &= \frac 1 2 \langle \vec x, \vec n\rangle \\
\nabla_{\vec{e_i}}H \notag &= \frac 1 2 \langle \nabla_{\vec{e_i}}\vec x, \vec n\rangle + \frac 1 2 \langle \vec x, \nabla_{\vec{e_i}}\vec n\rangle \\
\nabla_{\vec{e_i}}H \notag &= \frac 1 2 \langle \vec{e_i}, \vec n\rangle - \frac 1 2 a_{ij}\langle \vec x, \vec{e_j}\rangle \\
\nabla_{\vec{e_i}}H \notag &= -\frac {1} 2 a_{ij}\langle \vec x, \vec{e_j}\rangle \\
\nabla H \notag &= -\frac {1} 2 a_{ij}\langle \vec x, \vec{e_j}\rangle \vec{e_i} \\
\langle \vec v,\nabla H\rangle \notag &= -\frac {1} 2 a_{ij}\langle \vec x, \vec{e_j}\rangle \langle\vec v,\vec{e_i}\rangle \\
 \nabla f \notag &= \nabla \langle \vec v,\vec n\rangle \\
\nabla_{\vec{e_i}}f \notag &= \langle \nabla_{\vec{e_i}}\vec v, \vec n \rangle +\langle \vec v, \nabla_{\vec{e_i}}\vec n\rangle \\
\nabla_{\vec{e_i}}f \notag &= \langle \nabla_{\vec{e_i}}\vec v, \vec n \rangle -a_{ij}\langle \vec v, \vec{e_j}\rangle \\
\nabla f \notag &= \langle \nabla_{\vec{e_i}}\vec v, \vec n \rangle\vec{e_i} -a_{ij}\langle \vec v, \vec{e_j}\rangle\vec{e_i} \\
\frac 1 2 \langle\vec x,\nabla f\rangle \notag &= \frac 1 2 \langle \nabla_{\vec{e_i}}\vec v, \vec n \rangle \langle\vec x,\vec{e_i}\rangle -\frac 1 2 a_{ij}\langle \vec v, \vec{e_j}\rangle \langle \vec x, \vec{e_i}\rangle \\
\frac 1 2 \langle\vec x,\nabla f\rangle \notag &= \frac 1 2 \langle \nabla_{\vec{e_i}}\vec v, \vec n \rangle \langle\vec x,\vec{e_i}\rangle +\langle \vec v, \nabla H\rangle
\end{align}

We now have that $$Lf= - \frac 1 2 \langle \nabla_{\vec{e_i}}\vec v, \vec n \rangle \langle\vec x,\vec{e_i}\rangle +\frac 1 2 \langle \vec v,\vec n\rangle.$$  We investigate the first term, again using that $\vec v$ is a Killing field.

\begin{align} \langle \nabla_{\vec{e_i}}\vec v, \vec n \rangle \langle\vec x,\vec{e_i}\rangle \notag &= - \langle \nabla_{\vec n}\vec v, \vec {e_i} \rangle \langle\vec x,\vec{e_i}\rangle \\
\langle \nabla_{\vec{e_i}}\vec v, \vec n \rangle \langle\vec x,\vec{e_i}\rangle \notag &= - \langle \nabla_{\vec n}\vec v, \langle \vec x,\vec {e_i}\rangle \vec{e_i}\rangle \\
\langle \nabla_{\vec{e_i}}\vec v, \vec n \rangle \langle\vec x,\vec{e_i}\rangle \notag &= - \langle \nabla_{\vec n}\vec v, \vec x -\langle\vec x, \vec n\rangle\vec n\rangle \\
\langle \nabla_{\vec{e_i}}\vec v, \vec n \rangle \langle\vec x,\vec{e_i}\rangle \notag &= - \langle \nabla_{\vec n}\vec v, \vec x\rangle +\langle\vec x,\vec n\rangle\langle \nabla_{\vec n}\vec v,\vec n\rangle\\
\langle \nabla_{\vec{e_i}}\vec v, \vec n \rangle \langle\vec x,\vec{e_i}\rangle \notag &= - \langle \nabla_{\vec n}\vec v, \vec x \rangle \\
\langle \nabla_{\vec{e_i}}\vec v, \vec n \rangle \langle\vec x,\vec{e_i}\rangle \notag &= \langle \nabla_{\vec x}\vec v, \vec n \rangle \\
\end{align}

We now switch to Euclidean coordinates.

\begin{align}\vec x \notag &= x\vec i +y\vec j +z\vec k \\
\vec v \notag &= -y\vec i+x\vec j \\
\vec n \notag &= n_1\vec i+n_2\vec j +n_3\vec k \\
\langle\vec v,\vec n\rangle \notag &= xn_2-yn_1 \\
\nabla_{\vec x}\vec v \notag &= x\nabla_{\vec i}\vec v +y\nabla_{\vec j}\vec v +z\nabla_{\vec k}\vec v \\
\nabla_{\vec x}\vec v \notag &= x\vec j - y\vec i \\
\langle \nabla_{\vec x}\vec v, \vec n \rangle \notag &= xn_2-yn_1 \\
\langle \nabla_{\vec x}\vec v, \vec n \rangle \notag &= \langle\vec v,\vec n\rangle
\end{align}

Thus $Lf=0$, so $f$ is either identically $0$ or it is an eigenvector of $L$ with corresponding eigenvalue $0$.  Recall from Definitions \ref{10} and \ref{16} that since $\Sigma_\infty$ is stable it must have no negative eigenvalues.  Thus the lowest possible eigenvalue is $0$.

Suppose by way of contradiction that $f$ is not identically $0$.  Then it must be an eigenfunction of $L$ with eigenvalue $0$, which would make $0$ the lowest eigenvalue of $L$.  However, Colding and Minicozzi showed in \cite{CM1} that the eigenfunction for the lowest eigenvalue of $L$ cannot change sign.  Thus we can assume (by possibly multiplying it by $-1$) that $f\geq 0$ on all of $\Sigma_\infty$.  This leads to a contradiction.

Consider the function $h:\Sigma_\infty\rightarrow \R$ defined by $h(x,y,z)=z$ for all $(x,y,z)\in \Sigma_\infty$.  Then by Sard's Theorem (Theorem \ref{31}) the set of critical points of this function maps to a set of measure $0$.  Then for a generic value $z=c$, the slice $\Sigma_\infty\cap\{z=c\}$ contains no critical points of $h$.  Therefore, for such a $c$, the slice $\Sigma_\infty\cap\{z=c\}$ consists of a disjoint union of curves.  Consider one of these curves, and call it $\gamma$.

Parameterize $\gamma$ by arclength $s$.  Then $\gamma=(x(s),y(s),c)$, and $\gamma'=(x'(s),y'(s),0)$.  Note that since there is no critical point of the function $h$ when $z=c$, at every point of $\gamma$ the tangent plane to $\Sigma_\infty$ does not equal the slice $\{z=c\}$.  Thus, at each point of $\gamma$ the projection of the unit normal to $\Sigma_\infty$ onto the slice $\{z=c\}$ is nonzero.  This projection, call it $\vec n ^{\top}$, is also perpendicular to $\gamma'$, since $\gamma'$ is tangent to $\Sigma_\infty$.  Thus we can assume that $\vec n ^{\top}=l(s)(-y',x',0)$, where $l(s)>0$.  Note that $\vec v=(-y,x,0)$, so $$f=\langle \vec v, \vec n\rangle=l(s)(xx'+yy').$$

However, $r^2=x^2+y^2,$ so $2rr'=2xx'+2yy'$ which implies $r'=\frac f{rl}$.  We know $r$ and $l$ are strictly positive, so $f$ is positive at a point on $\gamma$ if and only if moving along $\gamma$ in the direction of increasing $s$ causes $r$ to increase.  Likewise, $f$ is negative if and only if $r$ is decreasing along $\gamma$.

We know that $f\geq 0$.  Thus if $\gamma$ is a closed curve, $\gamma$ must be a circle centered on the $z$-axis.  If $\gamma$ is not closed, then consider moving along $\gamma$ in the direction of decreasing $s$.  Then the distance $r$ to the $z$-axis can never increase.  Also, $\Sigma_\infty$ is embedded, so it cannot intersect itself.  Since $\Sigma_\infty$ does not intersect the interior of the cylinder, $r$ can never decrease below $\sqrt 2$.  Thus, $\gamma$ must be a spiral.  However, this cannot happen since by construction $\Sigma_\infty$ minimizes area.  Thus, $\gamma$ must be a circle centered on the $z$-axis, and $f\equiv 0$ on a dense set and hence everywhere.  Likewise, $\Sigma_\infty$ is rotationally symmetric on a dense set of $z$ slices.  Since $\Sigma_\infty$ has no singularities, it must be rotationally symmetric everywhere.  This completes the proof.
\end{proof}

We now state a theorem by Kleene and M{\o}ller that gives us more information about the surface $\Sigma_\infty$ found in Theorem \ref{22}.

\begin{thm}\label{23}\cite{KM} For each fixed ray from the origin, $$r_{\sigma}(z)=\sigma z, \ \ \ r_\sigma :(0,\infty)\rightarrow \R^+, \ \ \ \sigma>0,$$ there exists a unique smooth graphical solution $u_\sigma :[0,\infty)\rightarrow \R^+$ which is asymptotic to $r_\sigma$ and whose rotation about the $z$-axis is a self-shrinker.

Also, for $d>0,$ any solution $u:(d,\infty)\rightarrow \R^+$ (whose rotation about the $z$-axis is a self-shrinker) is either the cylinder $u\equiv \sqrt 2$, or is one of the $u_\sigma$ for some $\sigma=\sigma(u)>0$.

Furthermore, the following properties hold for $u_\sigma$ when $\sigma>0$:
\begin{enumerate}
\begin{item}$u_\sigma>r_\sigma$, and $u(0)<\sqrt 2,$
\end{item}

\begin{item}$|u_\sigma(z)-\sigma z|=O(\frac 1 z ),$ and $|u'_\sigma(z)-\sigma|=O(\frac 1 {z^2})$ as $z\rightarrow \infty,$
\end{item}

\begin{item}$\Sigma_\sigma$ generated by $u_\sigma$ has mean curvature $H(\Sigma_\sigma)>0$,
\end{item}

\begin{item}$u_\sigma$ is strictly convex, and $0<u'_\sigma<\sigma$ holds on $[0,\infty)$,
\end{item}

\begin{item}$\gamma_\sigma$, the maximal geodesic containing the graph of $u_\sigma$, is not embedded.
\end{item}
\end{enumerate}
\end{thm}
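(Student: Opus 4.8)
The plan is to reduce the statement to a one‑dimensional ODE for the profile curve and then run a shooting argument.

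\emph{Reduction.} For a surface of revolution $\{(u(z)\cos\theta,u(z)\sin\theta,z)\}$ one computes, for the outward unit normal, $H=\frac{1}{u\sqrt{1+(u')^2}}-\frac{u''}{(1+(u')^2)^{3/2}}$ and $\langle\vec x,\vec n\rangle=\frac{u-zu'}{\sqrt{1+(u')^2}}$, so by Lemma \ref{30} the self‑shrinker condition $H=\tfrac12\langle\vec x,\vec n\rangle$ is the second order ODE
\[
\frac{u''}{1+(u')^2}=\frac1u-\frac{u-zu'}{2}.\qquad(\star)
\]
Equivalently, by Theorem \ref{11} self‑shrinkers are minimal for the Gaussian metric $e^{-|\vec x|^2/4}\delta$, and reducing the weighted area by rotational symmetry shows that profile curves are exactly the geodesics of the conformal metric $\rho^2(dr^2+dz^2)$ on $\{r>0\}$, where $\rho=u\,e^{-(u^2+z^2)/4}$; since $\partial_u\log\rho=\frac1u-\frac u2$ vanishes only on $\{u=\sqrt2\}$, the line $u\equiv\sqrt2$ is the unique horizontal geodesic, i.e. the cylinder. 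Thus Theorem \ref{23} becomes a classification of the geodesics of this metric that are graphical and escape to infinity.

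\emph{Shape and asymptotics of solutions.} Local existence and uniqueness are standard. The key sign fact from $(\star)$ is that $u''$ has the sign of $\tfrac2u-(u-zu')$; in particular $u''(0)$ has the sign of $2-u(0)^2$. For a solution on a half‑line $(d,\infty)$: if $u$ stays bounded away from $0$ and $\infty$, bootstrap from $(\star)$ to get $u-zu'\to0$, $u'\to0$ and $u\to\sqrt2$, and then observe that the linearization of $(\star)$ at the cylinder is, to leading order, the Hermite‑type operator $v\mapsto v''-\tfrac z2 v'+v$ of Proposition \ref{47}(\ref{64}), all of whose solutions grow at infinity, so $u\equiv\sqrt2$. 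If instead $u\to\infty$, then once $u>\sqrt2$ one has $u''>0$ as long as $\tfrac2u>u-zu'$, and this is self‑sustaining since $u-zu'=-z^2(u/z)'$ stays small while $u/z$ decreases; hence $u'$ is eventually increasing, and a barrier argument bounds $u'$ above — a solution cannot be steeper at a point than the straight line through it, since that would force $u-zu'<0$, hence $H<0$ on a region, impossible for an $H$ with $LH=H$ that cannot change sign (Theorems \ref{38}, \ref{43}). So $u'\to\sigma\in(0,\infty)$ and $u/z\to\sigma$. Writing $u=\sigma z+w$ in $(\star)$ gives $u-zu'=w-zw'=-z^2(w/z)'$, and the leading balance $-z^2(w/z)'\approx\tfrac{2}{\sigma z}$ forces $w\approx\tfrac{2}{\sigma z}$; a contraction mapping on $\{w:|w|\le C/z\}$ makes this precise, giving $|u(z)-\sigma z|=O(1/z)$ and, after differentiating, $|u'(z)-\sigma|=O(1/z^2)$. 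The same expansion gives $u-zu'>0$ near infinity, the propagated convexity gives $u-zu'>0$ and $u''>0$ throughout, hence $H>0$ and strict convexity; $u'$ increasing with limit $\sigma$ yields $0<u'<\sigma$ and $u>r_\sigma$, and $u(0)<\sqrt2$ follows from $u''(0)>0$.

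\emph{Shooting, and non‑embeddedness.} It remains to show $\sigma\mapsto u_\sigma$ is a bijection onto the escaping graphical profiles. Consider the one‑parameter family of escaping geodesics crossing a fixed transversal and track the asymptotic‑slope map: existence of an escaping solution for each target $\sigma$ follows by trapping between sub/supersolutions built from the cones $r_{\sigma\pm\varepsilon}$ and the cylinder; continuity of the slope map follows from continuous dependence on data plus the uniform $O(1/z)$ control above; monotonicity (hence injectivity) follows from a Sturm/maximum‑principle comparison, sharpened by the fact that two escaping solutions with the same $\sigma$ are asymptotic to all orders and so equal by unique continuation; surjectivity onto $(0,\infty)$ follows by pushing the solution toward the cylinder (it hugs $\{u=\sqrt2\}$ longer and escapes with slope $\to0^+$) and away from it (slope $\to\infty$). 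Finally, Property 5 is a qualitative phase‑plane statement: extending $u_\sigma$ past $z=0$, where $u'(0)>0$ and $u''(0)>0$, the $-zu'$ term in $(\star)$ eventually makes $u''<0$, so the geodesic develops a vertical tangent and folds back, and a comparison shows the folded arc must meet the upward branch — so the rotated surface is immersed but not embedded.

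\emph{Main obstacle.} The heart of the proof is the analysis when $u\to\infty$ together with the shooting step: ruling out oscillation, slope blow‑up, or the solution turning around before escaping, and making the slope map continuous, monotone, and onto $(0,\infty)$. The delicate ingredients are the barrier/comparison constructions in the Gaussian metric and the contraction‑mapping refinement that upgrades $u-\sigma z=o(z)$ to the sharp $O(1/z)$ and $O(1/z^2)$ rates; the reduction to $(\star)$ and the local ODE theory are routine.
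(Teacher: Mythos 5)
The paper does not prove this theorem; it is quoted directly from Kleene and M{\o}ller \cite{KM}, so there is no internal proof to compare your sketch against. That said, your high-level framework --- rewriting the self-shrinker condition as the scalar profile ODE $(\star)$, identifying profiles with geodesics of the conformal metric $u\,e^{-(u^2+z^2)/4}(dz^2+du^2)$ on the half-plane, and then shooting --- is the right one and is essentially the approach taken in \cite{KM}.

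There is, however, a concrete gap in your barrier step. You claim that $u-zu'<0$ would force $H<0$ on a region and call this ``impossible for an $H$ with $LH=H$ that cannot change sign (Theorems \ref{38}, \ref{43}).'' Neither theorem says that. Theorem \ref{38} only establishes $LH=H$; Theorem \ref{43} only asserts that \emph{if} $H$ changes sign \emph{then} the first eigenvalue of $L$ is strictly below $-1$. In fact, Theorem \ref{33} says the generalized cylinders $\s^k\times\R^{n-k}$ are the \emph{only} complete embedded polynomial-growth self-shrinkers with $H\geq 0$, so on any other such complete surface $H$ must take negative values. The $\Sigma_\sigma$ escape this only because Theorem \ref{23} concerns the graphical piece over $z\in[0,\infty)$, not the complete geodesic $\gamma_\sigma$ --- on which $H$ certainly does change sign, as Property~5 (the fold) indicates. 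Hence $H>0$ on the graph is one of the conclusions you must extract directly from the ODE (e.g.\ from the sign and monotonicity of $u-zu'$ along the shooting trajectory), and importing it from the global spectral facts about $L$ is circular; as written, this is the load-bearing step that bounds $u'$, gives $u/z\to\sigma$, and launches the asymptotic expansion. Separately, the continuity, monotonicity, and surjectivity of the slope map $\sigma(u)$ are asserted rather than established; in \cite{KM} these form the genuinely delicate phase-plane part of the argument and would need to be carried out in detail.
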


The next corollary follows immediately from Theorem \ref{32} and the uniqueness in Theorem \ref{23}.

\begin{cor} For each $a\in(0,\sqrt 2)$, the stable surface $\Sigma_\infty$ found in Theorem \ref{22} must be a piece of one of the surfaces of rotation $\Sigma_\sigma$ described in Theorem \ref{23}.  In particular, there exists some $\sigma$ such that $\Sigma_\infty$ must equal the portion of $\Sigma_\sigma$ contained in the exterior of the cylinder of radius $\sqrt 2$.
\end{cor}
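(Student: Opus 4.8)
The plan is to combine the rotational symmetry of $\Sigma_\infty$ from Theorem \ref{32} with the uniqueness (classification) statement in Theorem \ref{23}. First I would note that $\Sigma_\infty$ is connected (by the construction in Lemma \ref{18}), embedded, noncompact with the single boundary circle $\gamma_a$, and contained in the exterior region $\{r\geq\sqrt 2\}$ of the cylinder, since each $\Sigma_b\subset\Omega_b\subset\{x^2+y^2\geq 2\}$. Being rotationally symmetric about the $z$-axis, $\Sigma_\infty$ is therefore the surface of revolution swept out by an embedded profile curve $\gamma$ in the half-plane $\{r\geq\sqrt 2\}$, with one endpoint at $(\sqrt 2,a)$ and the other running off to infinity. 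The first substantive step is to upgrade this to graphicality: $\gamma$ is the graph of a function $u$ over an interval of the $z$-axis, and since $\Sigma_\infty$ is noncompact and extends to $z\to+\infty$ (inherited from the boundary components $\gamma_b$ at heights $b\to\infty$), this interval is $[a,\infty)$ with $u(a)=\sqrt 2$; thus $u\colon(a,\infty)\to\R^+$ is a solution in the sense of Theorem \ref{23}.

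Second, I would rule out the degenerate case: $u$ is not the cylinder $u\equiv\sqrt 2$, for otherwise $\Sigma_\infty$ would be the portion $\{x_3>a\}$ of $\s^1\times\R$, which is unstable for $a\in(0,\sqrt 2)$ by Proposition \ref{17}, contradicting the stability of $\Sigma_\infty$ established in Theorem \ref{22}. Hence the classification statement in Theorem \ref{23} forces $u=u_\sigma|_{(a,\infty)}$ for some $\sigma>0$, so $\Sigma_\infty$ is a piece of the surface of rotation $\Sigma_\sigma$.

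Third, I would pin the surface down exactly. By property (4) of Theorem \ref{23}, $u_\sigma$ is strictly increasing with $u_\sigma(0)<\sqrt 2$, so $u_\sigma$ takes the value $\sqrt 2$ at exactly one height; matching with $u(a)=\sqrt 2$ shows this height is $a$ (which is consistent precisely because $a>0$). Consequently $\Sigma_\sigma\cap\{r\geq\sqrt 2\}$ is exactly the surface of revolution of $u_\sigma|_{[a,\infty)}$, that is, $\Sigma_\infty$ itself. Since $a\in(0,\sqrt 2)$ was arbitrary, this proves the corollary.

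The main obstacle is the graphicality step in the first paragraph: a priori the profile curve of the embedded rotationally symmetric surface $\Sigma_\infty$ could have a vertical tangent (a turning point) while remaining in $\{r\geq\sqrt 2\}$, in which case $\Sigma_\infty$ would not be a graph over the $z$-axis and Theorem \ref{23}, which is phrased for graphs over an interval $(d,\infty)$, would not directly apply. I would eliminate this using the structure of the profile-curve ODE for self-shrinkers of revolution analyzed in \cite{KM}: a turning point together with the shrinker equation and a maximum-principle comparison against the cylinder $r\equiv\sqrt 2$ (itself a self-shrinker) would force $\gamma$ either to re-enter $\{r<\sqrt 2\}$, which is impossible since $\Sigma_\infty$ lies in the exterior, or to coincide with the cylinder, which is already excluded; and the same ODE analysis rules out a finite-$z$ vertical asymptote, giving that the graphical domain is all of $[a,\infty)$. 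Alternatively, one may quote directly the classification of rotationally symmetric self-shrinkers in \cite{KM}, of which the convexity asserted in property (4) of Theorem \ref{23} is part.
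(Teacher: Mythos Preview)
Your proposal is correct and follows exactly the approach the paper indicates: combine the rotational symmetry from Theorem \ref{32} with the uniqueness part of Theorem \ref{23}. The paper's entire proof is the single sentence ``follows immediately from Theorem \ref{32} and the uniqueness in Theorem \ref{23},'' so your write-up is simply a careful elaboration of that sentence, including the explicit exclusion of the cylinder via Proposition \ref{17} and the graphicality discussion, both of which the paper leaves implicit.
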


We now have a stable half-infinite self-shrinker with boundary equal to $\gamma_a$ for each $a>0$.  By symmetry, we have also solved the problem for all $a<0$.  We now extend this result to $a=0$.

\begin{thm}\label{49} The portion of the flat plane in $\R^3$ given by $\{|\vec x|>\sqrt 2\}$ is a stable self-shrinker.
\end{thm}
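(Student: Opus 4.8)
The plan is to realize $\{|\vec x|>\sqrt 2\}$ in the flat plane as a smooth limit, as $a\downarrow 0$, of the stable half-infinite self-shrinkers $\Sigma_\infty^{(a)}$ with boundary $\gamma_a$ produced by Theorem \ref{22}. By the corollary preceding that theorem, for $a\in(0,\sqrt 2)$ each $\Sigma_\infty^{(a)}$ is precisely the part of a Kleene--M\o ller surface $\Sigma_{\sigma(a)}$ lying in the solid-cylinder complement $\{x^2+y^2\ge 2\}$, and it meets the cylinder $\{x^2+y^2=2\}$ in the circle $\gamma_a$; thus $a$ is the (positive) height at which the graph $u_{\sigma(a)}$ attains the value $\sqrt 2$. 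Away from its boundary circle, each $\Sigma_\infty^{(a)}$ has locally uniformly bounded $|A|^2$ by Schoen's estimate (Theorem \ref{19}); near $\gamma_a$, since the $\gamma_a$ are congruent circles converging smoothly to $\gamma_0$, the blow-up argument of Lemma \ref{20} together with Hardt--Simon boundary regularity yields a curvature bound uniform as $a\to 0$. Hence, by a localized version of Anderson's compactness theorem (Theorem \ref{21}) valid up to the boundary, some sequence $a_j\to 0^+$ has $\Sigma_\infty^{(a_j)}$ converging smoothly on compact subsets of $\{x^2+y^2\ge 2\}$ to a limit $\Sigma_*$ that is embedded, half-infinite, rotationally symmetric about the $x_3$-axis (this passes to the limit from Theorem \ref{32}), has boundary $\gamma_0$, lies in $\{x^2+y^2\ge 2\}$, and is stable (a negative second variation on a compact piece of $\Sigma_*$ would, by smooth convergence, produce one on $\Sigma_\infty^{(a_j)}$ for large $j$).

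Next I would identify $\Sigma_*$ with the punctured plane $\Pi:=\{x_3=0\}\cap\{x^2+y^2\ge 2\}$. After passing to a further subsequence, let $\sigma_\infty\in[0,\infty]$ be the limit of $\sigma(a_j)$. If $\sigma_\infty\in(0,\infty)$, then by smooth dependence of $u_\sigma$ on $\sigma$ the limit $\Sigma_*$ is the exterior part of $\Sigma_{\sigma_\infty}$; but $u_{\sigma_\infty}(0)<\sqrt 2$ by Theorem \ref{23}, so that part meets the cylinder at a strictly positive height, contradicting $\partial\Sigma_*=\gamma_0$. If $\sigma_\infty=0$, then $\Sigma_{\sigma(a_j)}$ converges to the cylinder, so $\Sigma_*$ would be a piece of the half-cylinder $\{x^2+y^2=2,\ x_3\ge 0\}$, which is unstable by Proposition \ref{17} (take $a=0$), contradicting stability of $\Sigma_*$. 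Therefore $\sigma_\infty=\infty$, and since the exterior parts of $\Sigma_\sigma$ flatten onto $\Pi$ as $\sigma\to\infty$ (the height at which $u_\sigma=\sqrt 2$ tends to $0$, and $|u_\sigma(z)-\sigma z|=O(1/z)$, $|u_\sigma'(z)-\sigma|=O(1/z^2)$), we conclude $\Sigma_*=\Pi$. Equivalently, one may argue intrinsically: the rotationally symmetric $\Sigma_*$ is generated by a self-shrinker curve issuing from $(\sqrt 2,0)$ and running to infinity inside $\{r\ge\sqrt 2\}$; the uniqueness in Theorem \ref{23} forbids this curve from eventually being a graph $r=u(z)$ over $(d,\infty)$, since the cylinder is excluded by Proposition \ref{17} and each $u_\sigma$ by $u_\sigma(0)<\sqrt 2$, so the curve is asymptotically horizontal and $\Sigma_*$ must be $\Pi$.

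Finally, since $\Pi=\Sigma_*$ is a stable self-shrinker, so is its interior $\{|\vec x|>\sqrt 2\}$: the normalized positive Jacobi functions on the $\Sigma_\infty^{(a_j)}$ subconverge, by the Harnack inequality and interior elliptic estimates, to a nonnegative Jacobi function on $\Pi$ that is strictly positive in the interior by the strong maximum principle, hence a positive Jacobi function on $\{|\vec x|>\sqrt 2\}$.

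The main obstacle is the convergence step: one must rule out loss of area, collapse, or escape to infinity near the shrinking boundary as $a\to 0$, so that the limit $\Sigma_*$ genuinely retains $\gamma_0$ as its boundary and is nondegenerate there; this is a curvature estimate uniform up to the \emph{moving} boundary curve $\gamma_a$, the boundary analogue of Lemma \ref{20}. Closely tied to it is the identification step --- in particular excluding $\sigma_\infty=0$ (the unstable half-cylinder) and verifying the $\sigma\to\infty$ flattening of the $\Sigma_\sigma$ onto $\Pi$. Once $\Sigma_*=\Pi$ is established, transporting stability to the open region is routine.
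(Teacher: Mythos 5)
Your overall strategy is sound and shares the paper's starting point — identifying the stable surfaces from Theorem \ref{22} as exterior pieces of the Kleene--M{\o}ller surfaces $\Sigma_\sigma$ and letting the parameter degenerate toward the plane — but the two proofs diverge sharply at the step where stability is passed to the limit, and the paper's route is both simpler and bypasses the very obstacle you flag as your main difficulty. The paper never needs smooth convergence of the $\Sigma_\infty^{(a)}$ up to the moving boundary, nor an Anderson-type compactness argument: it observes that each $\Sigma_\infty^{(a)}$ is sandwiched between $\{z=0\}$ and the cone $\{z=r/\sigma\}$ (from $u_\sigma > r_\sigma$), so as $\sigma\to\infty$ the surfaces converge uniformly on compact sets to $\{|\vec x|\ge\sqrt 2\}\cap\{z=0\}$ with the boundaries $\gamma_a$ converging to $\gamma_0$, and then it runs a Meeks--Yau cut-and-paste argument in the Gaussian-conformal metric. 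Since the approximating surfaces are \emph{area-minimizing} (not merely stable), any competitor beating the limit by $\epsilon$ in area would, after inserting a thin annulus of area $<\epsilon/2$ between nearby boundary curves, beat some $\Sigma_\infty^{(a_j)}$ as well, a contradiction. This uses only $C^0$ closeness of surfaces and boundaries, so the uniform-up-to-boundary curvature estimate that underpins your Anderson-compactness step, the passage of the second variation through a smooth limit, and the $\sigma_\infty$ case analysis are all avoided. In exchange, your argument is more explicit about why the limit is the plane rather than something else (the paper's sandwich makes this implicit) and would work even without the area-minimizing structure, but it requires machinery (boundary regularity uniform in the moving boundary $\gamma_a$, and a boundary version of Theorem \ref{21}) that the paper deliberately sidesteps. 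Two smaller remarks: the $\sigma_\infty=0$ branch of your case analysis needs a bit more care, since if $\Sigma_{\sigma(a_j)}\to$ cylinder the exterior pieces degenerate (they need not converge to the full half-cylinder $\{z\ge 0\}$ so that Proposition \ref{17} applies directly) — one should instead argue that the limit would then be contained in $\{r=\sqrt 2\}$ yet have boundary $\gamma_0$ and be half-infinite, reaching the same contradiction; and your final paragraph constructing a positive Jacobi function on $\Pi$ is superfluous, since stability of a closed region automatically gives stability of its interior (the admissible compactly supported test functions on $\{|\vec x|>\sqrt 2\}$ are a subset of those on $\Pi$).
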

\begin{proof} We note first that for each $a\in(0,\sqrt 2)$, we obtain a stable self-shrinker given by one of the surfaces described in Theorem \ref{23}.  Each of these surfaces is contained between the flat plane $\{z=0\}$ and the cone obtained by rotating the ray $r_{\sigma}(z)=\sigma z$ about the $z$-axis.  Then, letting $\sigma\rightarrow\infty$ so that the cones approach the plane, we obtain a sequence of stable self-shrinkers converging pointwise to the portion of the plane given by $\{|\vec x|>\sqrt 2\}$.  In particular, this convergence is uniform on compact sets.

In the conformal metric $d\tilde\mu=e^{\frac{-|\vec x|^2} 4}d\mu$ these converging surfaces are area minimizing.  We now follow an argument from \cite{MY} to show that the limit surface is also area minimizing and hence stable as a self-shrinker in the standard metric.  Suppose the limit surface does not minimize area.  This means that there exists some compact subdomain $\Sigma$ of the limit surface such that some other surface $\tilde \Sigma$ satisfies $\partial\tilde\Sigma=\partial\Sigma$ and $Area(\Sigma)=Area(\tilde\Sigma)+\epsilon$ for some $\epsilon>0$.  However, there is a sequence of area minimizing compact surfaces with boundary converging uniformly to $\Sigma$.  The boundaries are also converging uniformly.

Thus, there exists some area minimizing $\Sigma_m$ such that $Area(\Sigma)\leq Area(\Sigma_m) +\frac{\epsilon}2$, and the boundaries of $\Sigma$ and $\Sigma_m$ are close enough together that they bound some surface $\tilde A$ with $Area(\tilde A)<\frac{\epsilon}2$.  However, this gives us a contradiction.  The surface $\tilde\Sigma\cup\tilde A$ has the same boundary as $\Sigma_m$.  On the other hand, we know $$Area(\tilde\Sigma\cup\tilde A)<Area(\Sigma)-\frac{\epsilon}2 \leq Area(\Sigma_m).$$  This contradicts the fact that $\Sigma_m$ minimizes area.  This completes the proof.
\end{proof}

We recall attention to the value $r_1\approx 2.514$ from Proposition \ref{48}.  We have now shown that for any value $r\in(\sqrt 2 , r_1)$, the circle of radius $r$ in the $2$-plane splits the plane into two stable regions.

\chapter{Future Research}\label{61}

We showed in Chapter \ref{24} that generalized cylinders split in a unique way into maximal stable rotationally symmetric regions.  However, we showed at the end of Chapter \ref{60} that this is not true for the plane in $\R^3$.  This result relies on a minimal surface compactness result in Riemannian $3$-manifolds (Theorem \ref{21}), so we have been unable to reproduce it in higher dimensions.  We know the index of the hyperplane in any dimension is $1$ (see Theorem \ref{37}), and we also know that the positive Jacobi field $f_1$ on the region $\{r>r_1\}$ is not an eigenfunction in any dimension (see Lemma \ref{62}).  These facts lead us to the following.

\begin{Con} Let $\Sigma=\R^n\subset\R^{n+1}$ be a self-shrinker with $n\geq 3$.  Let $r_1$ be as in Remark \ref{55}.  Note that $r_1$ depends on $n$.  Then there exists some $r_0=r_0(n)$ with $r_0<r_1$ such that for all $r\in(r_0,r_1)$ the $(n-1)$-sphere of radius $r$ centered at the origin splits $\Sigma$ into two stable regions.
\end{Con}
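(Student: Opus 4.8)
The plan is to bypass the surface-theoretic construction of Chapter~\ref{60} and argue purely spectrally. For $R>0$ write $\Omega_R=\{|\vec x|>R\}\subset\R^n=\Sigma$ (so $R$ plays the role of the conjecture's $r$), and let $\lambda_1(R)$ be the bottom of the spectrum of $-L$ on $\Omega_R$ with Dirichlet data, computed in the weighted space $L^2(\Omega_R,e^{-|\vec x|^2/4}\,d\mu)$. By the variational characterization of stability (a positive Jacobi function forces $\lambda_1\ge 0$; cf.\ the Fischer--Colbrie--Schoen argument underlying Chapter~\ref{24} and \cite{FC}), $\Omega_R$ is stable precisely when $\lambda_1(R)\ge 0$. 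The first step is to reduce to the radial ODE of Proposition~\ref{47}(\ref{66}): since $\Omega_R$ is rotation invariant and the lowest eigenfunction of $L$ on a connected domain does not change sign (by \cite{CM1}), the ground eigenspace is one-dimensional and its generator is rotationally symmetric, so $\lambda_1(R)$ equals the bottom eigenvalue of $\mathcal L f=f''+(\tfrac{n-1}{r}-\tfrac r2)f'+\tfrac12 f$ on $(R,\infty)$ with weight $r^{n-1}e^{-r^2/4}$.

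Next I would set up the spectral theory. Conjugating $\mathcal L$ by the Gaussian exactly as in the proofs of Propositions~\ref{39} and~\ref{47} turns it into a Schr\"odinger operator whose potential grows like $r^2/16$ at infinity; since this potential is confining and the boundary sphere is compact, the operator has compact resolvent, hence discrete spectrum, so $\lambda_1(R)$ is attained by an eigenfunction. Moreover $\lambda_1(R)$ depends continuously and, by the domain monotonicity theorem quoted before Lemma~\ref{62}, strictly monotonically on $R$, with $\lambda_1(R)\to\lambda_1(\R^n)=-\tfrac12$ as $R\to 0^+$ (removing the puncture is capacity-negligible for $n\ge 2$) and $\lambda_1(R)\to+\infty$ as $R\to\infty$. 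Hence there is a unique $R_\ast>0$ with $\lambda_1(R_\ast)=0$, and $\Omega_R$ is stable if and only if $R\ge R_\ast$.

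The crux is to show $R_\ast<r_1$, i.e.\ $\lambda_1(r_1)>0$. We already know $\lambda_1(r_1)\ge 0$, since by Lemma~\ref{62} the function $f_1>0$ is a Jacobi function on $\Omega_{r_1}$. Suppose $\lambda_1(r_1)=0$. By discreteness, $0$ is then an eigenvalue with a positive, rotationally symmetric, weighted-$L^2$ eigenfunction $\psi$ solving $\mathcal L\psi=0$ on $(r_1,\infty)$ and vanishing at $r=r_1$. But the solution space of $\mathcal L f=0$ on $(r_1,\infty)$ is two-dimensional and, by Proposition~\ref{47}(\ref{66}), spanned by $f_1$ and $f_2$ with $f_2(r_1^+)\ne 0$; so the only solutions vanishing at $r_1$ are scalar multiples of $f_1$, forcing $\psi\in\R f_1$. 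This contradicts Lemma~\ref{62}, which asserts that $f_1$ is not in the weighted $L^2$ space on $\Omega_{r_1}$. Hence $\lambda_1(r_1)>0$, so $R_\ast<r_1$. Setting $r_0=R_\ast$: for every $R\in(r_0,r_1)$ the region $\{|\vec x|>R\}$ is stable (as $\lambda_1(R)>\lambda_1(R_\ast)=0$) and $\{|\vec x|<R\}\subset\{|\vec x|<r_1\}$ is stable (as $-f_1>0$ is a Jacobi function there), so the $(n-1)$-sphere $\{|\vec x|=R\}$ splits $\Sigma$ into two stable regions, which is the conjecture.

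The main obstacle is making the functional-analytic bookkeeping on the noncompact exterior domain rigorous: pinning down the correct weighted Sobolev space and self-adjoint (Friedrichs) extension of $L$ on $\Omega_R$, checking that Gaussian conjugation is a genuine unitary equivalence onto an $L^2(\R^n)$-type space so the harmonic-oscillator comparison applies, and confirming compactness of the resolvent so that $\lambda_1(R)$ is an honestly attained eigenvalue depending continuously on $R$ — this is exactly what powers the dichotomy ``$\lambda_1(r_1)\ge 0$ but $0$ is not an eigenvalue, hence $\lambda_1(r_1)>0$.'' A more geometric alternative would imitate Chapter~\ref{60}, producing stable rotationally symmetric self-shrinkers with boundary $\{|\vec x|=a\}$ and letting $a\downarrow 0$, but that route relied on Anderson's minimal-surface compactness theorem in Riemannian $3$-manifolds (Theorem~\ref{21}); in higher dimensions one would first need a replacement, e.g.\ curvature estimates for stable hypersurfaces in the conformal Gaussian metric.
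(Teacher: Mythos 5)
This statement is stated in the paper as a \emph{conjecture} and is not proved there: the author says explicitly in Chapter~\ref{61} that his surface-theoretic argument for the $n=2$ case (Theorem~\ref{49}) relies on Anderson's compactness theorem for minimal surfaces in Riemannian $3$-manifolds (Theorem~\ref{21}) and does not generalize to $n\ge 3$. Your spectral approach is therefore a genuinely new line of attack rather than a restatement. The key move — ruling out $\lambda_1(r_1)=0$ by observing that any $0$-eigenfunction would have to be a rotationally symmetric, Dirichlet, weighted-$L^2$ solution of $\mathcal Lf=0$ on $(r_1,\infty)$, hence (since $f_2(r_1)\neq 0$) a scalar multiple of $f_1$, which Lemma~\ref{62} rules out of the weighted $L^2$ space — is correct and is exactly the sort of thing the paper set up Lemma~\ref{62} to make possible. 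The supporting facts you cite (compact resolvent after Gaussian conjugation because the resulting Schr\"odinger potential $\sim|\vec x|^2/16$ is confining on the exterior domain; simplicity and rotational symmetry of the ground state on the connected domain $\Omega_R$) are standard and hold here.

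The one real gap, which you flag yourself, is the passage from $\lambda_1(r_1)>0$ to $R_\ast<r_1$: you invoke continuity of $R\mapsto\lambda_1(R)$ on noncompact exterior domains, which is plausible but not immediate from the material in the paper. You can sidestep it entirely with an ODE argument in the spirit of Chapter~\ref{24}. Since $\infty$ is in the limit-point case for the conjugated operator, there is a unique (up to scale) solution $\psi$ of $\mathcal L\psi=0$ on $(c,\infty)$ that lies in the weighted $L^2$ space near infinity; your contradiction shows $\psi\notin\R f_1$, hence $\psi(r_1)\neq 0$. Moreover $\psi$ cannot vanish at any $\rho>r_1$, for then the rotationally symmetric extension of $\psi|_{(\rho,\infty)}$ would be a Dirichlet $0$-eigenfunction of $-L$ on $\Omega_\rho$, forcing $\lambda_1(\rho)\le 0$, which contradicts the domain monotonicity theorem quoted before Lemma~\ref{62} together with $\lambda_1(r_1)>0$. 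Thus $\psi$ is sign-definite on $[r_1,\infty)$, and being a continuous function of $r$ it remains sign-definite on $[r_1-\varepsilon,\infty)$ for some $\varepsilon>0$. After a possible sign change, $\psi$ is then a positive Jacobi function on $\Omega_{r_1-\varepsilon}$, so $\Omega_{r_1-\varepsilon}$ is stable by Definition~\ref{8}. Taking $r_0=r_1-\varepsilon$, and recalling that $\{|\vec x|<r\}$ is stable for every $r\le r_1$ because $-f_1$ is a positive Jacobi function there, gives the conjecture without ever needing continuity of $\lambda_1$ in $R$.
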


Recall that self-shrinkers are also minimal surfaces with respect to a Gaussian metric.  Minimal surfaces solve a variational problem, so there is also a closely related isoperimetric problem.  This problem is to minimize surface area within the class of surfaces which bound a fixed volume.  Note that the volume of all of $\R^n$ with the Gaussian metric is finite, so any surface that splits $\R^n$ into two connected components bounds a region of finite volume.

Some work has already been done on this subject, and in fact Carlen and Kerce have proven that solutions of this isoperimetric problem are flat hyperplanes \cite{CK}.  In Proposition \ref{39} we showed that all hyperplanes through the origin have stability index $1$.  It should be possible to extend this result to the hyperplanes that do not go through the origin and prove the following.

\begin{Con} Every hyperplane in $\R^n$ endowed with the Gaussian metric is stable as a solution to the isoperimetric problem.
\end{Con}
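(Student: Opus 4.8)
The plan is to extend the spectral analysis of Proposition \ref{39} from hyperplanes through the origin to arbitrary affine hyperplanes and then read off stability. Throughout I keep the paper's Gaussian weight $e^{-|\vec x|^2/4}$, so the weighted perimeter is $P(\Sigma)=\int_\Sigma e^{-|\vec x|^2/4}\,d\mu$, the weighted volume of a region $E$ is $V(E)=\int_E e^{-|\vec x|^2/4}\,d\mathcal H^n$, and the isoperimetric problem is to minimize $P(\partial E)$ with $V(E)$ fixed. First I would record the Euler--Lagrange equation: for a normal variation with speed $f$ one has $\delta P=\int_\Sigma(H-\tfrac12\langle\vec x,\vec n\rangle)f\,d\tilde\mu$ (as in the proof of Theorem \ref{11}) and $\delta V=\int_\Sigma f\,d\tilde\mu$, so a constrained critical point satisfies $H-\tfrac12\langle\vec x,\vec n\rangle=\lambda$ for a constant $\lambda$. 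On the affine hyperplane $\Pi_{c,\omega}=\{\langle\vec x,\omega\rangle=c\}$ ($\omega$ a unit vector) one has $H\equiv 0$ and $\langle\vec x,\vec n\rangle\equiv c$, so $\Pi_{c,\omega}$ is a critical point with $\lambda=-c/2$; and by Carlen--Kerce \cite{CK}, for each admissible volume the minimizer is exactly one such $\Pi_{c,\omega}$, with $c$ fixed by the volume and $\omega\in\s^{n-1}$ arbitrary.

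The key observation is that the second-variation operator is independent of the offset $c$ and coincides with the operator $L$ of Definition \ref{8}. By the standard treatment of constrained variational problems the Jacobi form of the isoperimetric problem at such a critical point is $Q(f)=-\int_{\Pi_{c,\omega}}f\,Lf\,d\tilde\mu$ acting on volume-preserving variations, i.e.\ on $\{f:\int_{\Pi_{c,\omega}}f\,d\tilde\mu=0\}$ (the Lagrange-multiplier contribution to $\delta^2V$ drops out, as in Barbosa--do Carmo). After a rotation take $\omega=\vec e_n$ and parametrize $\Pi_{c,\vec e_n}$ by $(x_1,\dots,x_{n-1})$; then $\vec x=\sum_{i<n}x_i\vec e_i+c\vec e_n$ while the tangential gradient has no $\vec e_n$-component, so $\langle\vec x,\nabla f\rangle=\sum_{i<n}x_i\partial_i f$, and since $|A|^2\equiv 0$ we get $Lf=\Delta f-\tfrac12\langle\vec x,\nabla f\rangle+\tfrac12 f$, exactly the operator on the hyperplane through the origin. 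The weighted measure on $\Pi_{c,\vec e_n}$ is a constant multiple of that on $\Pi_{0,\vec e_n}$, so Proposition \ref{39} applies verbatim: the eigenvalues of $L$ on $\Pi_{c,\omega}$ are $\{-\tfrac12+\tfrac12\sum_{i=1}^{n-1}k_i\}$, the unique negative one being $-\tfrac12$ with constant eigenfunction and the next being $0$ with eigenfunctions $x_1,\dots,x_{n-1}$.

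It then follows that $\Pi_{c,\omega}$ is stable: any volume-preserving $f$ is $L^2(d\tilde\mu)$-orthogonal to the constant eigenfunction, on the orthogonal complement of which every eigenvalue of $-L$ is $\ge 0$, so $Q(f)\ge 0$. The nullspace of $Q$ on the constraint subspace is spanned by $x_1,\dots,x_{n-1}$; these are precisely the Jacobi fields generated by rotating the normal $\omega$ through $\s^{n-1}$, hence tangent to the family of competing hyperplane solutions, so this degeneracy is ``integrable'' and does not signal instability. The single negative mode of Proposition \ref{39}, the constant, is exactly the direction that changes $V$, and is legitimately removed by the constraint.

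The part that needs the most care is not the spectral count but the two structural points: (i) confirming that the constrained Jacobi form is really $-\int fLf\,d\tilde\mu$ with $L$ the Definition \ref{8} operator --- i.e.\ that the second variation of $V$ contributes nothing extra --- which is the standard but not entirely trivial fact underlying the Barbosa--do Carmo characterization; and (ii) upgrading ``$Q\ge 0$'' to an honest local-minimality statement on the noncompact, Gaussian-weighted $\Pi_{c,\omega}$. For (ii) one can either restrict first to compactly supported volume-preserving variations and use density in the weighted Sobolev space on which $Q$ is defined, or --- more cheaply --- invoke the global rigidity of the Gaussian isoperimetric profile (Carlen--Kerce \cite{CK}): half-spaces are the \emph{unique} minimizers, so no competitor of equal volume beats $\Pi_{c,\omega}$ even globally. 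With the convention that ``stable'' means ``index $0$'' for the constrained operator, the proof reduces to the offset-independence observation plus Proposition \ref{39}, and there is essentially no obstacle beyond the bookkeeping just described.
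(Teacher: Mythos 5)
This statement is Conjecture~4.3 in the paper's ``Future Research'' chapter: the author explicitly leaves it unproven, remarking only that ``it should be possible to extend [Proposition~\ref{39}] to the hyperplanes that do not go through the origin.'' So there is no paper proof to compare against; what you have written is a proposed resolution of an open problem posed by the dissertation.

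That said, your argument is sound and is precisely the route the author signals. The two points on which the whole thing turns are both handled correctly. First, on the affine hyperplane $\{\langle \vec x,\omega\rangle=c\}$ one has $\vec n=\omega$, $|A|^2\equiv 0$, and the tangential gradient has no $\omega$-component, so $\langle \vec x,\nabla f\rangle$ only sees the tangential coordinates; hence the operator of Definition~\ref{8} on $\Pi_{c,\omega}$ literally coincides with the operator on $\Pi_{0,\omega}$, and the weighted measure differs only by the overall factor $e^{-c^2/4}$. Proposition~\ref{39} (applied to an $(n-1)$-plane in $\R^n$) then gives the spectrum $\{-\tfrac12+\tfrac12\sum_{i=1}^{n-1}k_i\}$ with simple bottom eigenvalue $-\tfrac12$ and constant eigenfunction. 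Second, the claim that the constrained second variation is $Q(f)=-\int f\,Lf\,d\tilde\mu$ with the \emph{same} $L$, i.e.\ that the Lagrange multiplier $\lambda=-c/2$ does not feed into the Jacobi operator, is correct: at a constrained critical point $\delta^2(P-\lambda V)(f)=\int(\delta H_f)\,f\,d\tilde\mu$ because the $(H_f-\lambda)$-weighted terms vanish, and $\delta H_f$ is a local linearization depending only on $A$, the drift, and $\mathrm{Hess}\bigl(\tfrac{|\vec x|^2}{4}\bigr)(\vec n,\vec n)=\tfrac12$, not on the value of $H_f$. This is the Barbosa--do Carmo mechanism transported to the Gaussian weight, and your appeal to it is justified. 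With $Q\ge 0$ on $\{f:\int f\,d\tilde\mu=0\}$ by min--max (the constant is the only negative direction and it is excluded by the constraint), and the kernel $\mathrm{span}\{x_1,\dots,x_{n-1}\}$ identified as the rotations of $\omega$, the conjecture follows in the ``index zero on volume-preserving variations'' sense. Your two fallback options for upgrading to local minimality (density of compactly supported competitors in the weighted Sobolev space, or outright global uniqueness via Carlen--Kerce) are both legitimate, with the caveat that invoking Carlen--Kerce alone would make the spectral computation superfluous; the spectral argument is what gives a self-contained, variational proof in the spirit of the dissertation.
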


\begin{appendix}
\end{appendix}
\addcontentsline{toc}{chapter}{\protect\numberline{}Bibliography}

\pagebreak

\begin{center}Vitae\end{center}

Caleb Hussey was born on March 13th, 1982 in West Union, Ohio.  He received his Bachelor of Arts in Mathematics from New College of Florida in June 2005.  His undergraduate thesis was completed under the guidance of Dr. David T. Mullins.  He entered a Doctoral Program at The Johns Hopkins University in the fall of 2006.  He received his Master of Arts in Mathematics from The Johns Hopkins University in May 2007. His dissertation was completed under the guidance of Dr. William P. Minicozzi II, and this dissertation was defended on June 27th, 2012.

\addcontentsline{toc}{chapter}{\protect\numberline{}
Vitae}

\end{document}